\definecolor{gr}{rgb}   {0.,   0.69,   0.23 }
\definecolor{bl}{rgb}   {0.,   0.5,   1. }
\definecolor{mg}{rgb}   {0.85,  0.,    0.85}
\definecolor{yl}{rgb}   {0.8,  0.7,   0.}
\definecolor{or}{rgb}  {0.7,0.2,0.2}
\newtheorem{theorem}{Theorem} [section]
\newtheorem{lemma}[theorem]{Lemma}
\newtheorem{remark}[theorem]{Remark}
\DeclareMathOperator*{\supp}{supp}
\DeclareMathOperator{\Id}{Id}
\newcommand{\I}{\hspace{0.5mm}\text{I}\hspace{0.5mm}}
\newcommand{\noi}{\noindent}
\newcommand{\Z}{\mathbb{Z}}
\newcommand{\R}{\mathbb{R}}
\newcommand{\T}{\mathbb{T}}
\let\Re=\undefined\DeclareMathOperator*{\Re}{Re}
\let\Im=\undefined\DeclareMathOperator*{\Im}{Im}
\newcommand{\E}{\mathbb{E}}
\renewcommand{\L}{\mathcal{L}}
\newcommand{\F}{\mathcal{F}}
\newcommand{\al}{\alpha}
\newcommand{\be}{\beta}
\newcommand{\dl}{\delta}
\newcommand{\nb}{\nabla}
\newcommand{\Dl}{\Delta}
\newcommand{\eps}{\varepsilon}
\newcommand{\kk}{\kappa}
\newcommand{\g}{\gamma}
\newcommand{\ld}{\lambda}
\newcommand{\Ld}{\Lambda}
\newcommand{\s}{\sigma}
\newcommand{\ft}{\widehat}
\newcommand{\wt}{\widetilde}
\newcommand{\cj}{\overline}
\newcommand{\dx}{\partial_x}
\newcommand{\dt}{\partial_t}
\newcommand{\too}{\longrightarrow}
\newcommand{\ta}{\theta}
\renewcommand{\l}{\ell}
\renewcommand{\o}{\omega}
\renewcommand{\O}{\Omega}
\newcommand{\les}{\lesssim}
\newcommand{\jb}[1]
{\langle #1 \rangle}
\newcommand{\ind}{\mathbf 1}
\newcommand{\deff}{\stackrel{\textup{def}}{=}}
\newcommand{\N}{\mathbb{N}}
\newcommand{\NN}{\mathcal{N}}
\newtheorem*{ackno}{Acknowledgements}
\numberwithin{equation}{section}
\numberwithin{theorem}{section}
\newcommand{\Q}{\mathbf{Q}}
\newcommand{\PP}{\mathbb{P}}
\newcommand{\C}{\mathbb{C}}
\newcommand{\z}{\zeta}
\DeclareMathOperator{\Law}{Law}
\newcommand{\rhoo}{\vec{\rho}}
\newcommand{\W}{\mathcal{W}}
\newcommand{\U}{\mathcal{U}}
\newcommand{\dr}{\theta}
\newcommand{\Dr}{\Theta}
\newcommand{\Ha}{\mathbb{H}_a}
\newcommand{\Gdl}{\mathcal{G}_{\dl} }
\begin{document}
\baselineskip = 14pt

\title[Log-correlated Gibbs measures]
{A remark on Gibbs measures with  log-correlated Gaussian fields}

\author[T.~Oh, K.~Seong, and L.~Tolomeo]
{Tadahiro Oh, Kihoon Seong, and Leonardo Tolomeo}

\address{
Tadahiro Oh\\ School of Mathematics\\
The University of Edinburgh\\
and The Maxwell Institute for the Mathematical Sciences\\
James Clerk Maxwell Building\\
The King's Buildings\\
Peter Guthrie Tait Road\\
Edinburgh\\ 
EH9 3FD\\
United Kingdom}

\email{hiro.oh@ed.ac.uk}

\address{
Kihoon Seong\\
Department of Mathematical Sciences\\
Korea Advanced Institute of Science
and Technology\\ 
291 Daehak-ro, Yuseong-gu, Daejeon 34141, Republic of Korea, 
Max Planck Institute 
for Mathematics In The Sciences\\ 
04103 Leipzig, Germany, 
and
Department of Mathematics, Cornell University, 310 Malott Hall, Ithaca, NY, 14853, USA
}

\email{kihoonseong@cornell.edu}

\address{
Leonardo Tolomeo\\ School of Mathematics\\
The University of Edinburgh\\
and The Maxwell Institute for the Mathematical Sciences\\
James Clerk Maxwell Building\\
The King's Buildings\\
Peter Guthrie Tait Road\\
Edinburgh\\ 
EH9 3FD\\
United Kingdom\\ and
Mathematical Institute, Hausdorff Center for Mathematics, Universit\"at Bonn, Bonn, Germany}

\email{l.tolomeo@ed.ac.uk}

\subjclass[2020]{60H30, 81T08, 35Q53, 35Q55, 35L71}

\keywords{Gibbs measure; log-correlated Gaussian field; non-normalizability}

\begin{abstract}

We study Gibbs measures with log-correlated base Gaussian fields
on the $d$-dimensional torus. 
In the defocusing case, 
the construction of  such Gibbs measures
follows from Nelson's argument.
In this paper, we consider the focusing case
with a quartic interaction. 
Using  the variational formulation, 
we  prove non-normalizability of the Gibbs measure.
When $d = 2$, our argument provides an alternative proof of the non-normalizability result
for the focusing $\Phi^4_2$-measure 
by Brydges and Slade (1996).
Furthermore, we provide a  precise rate of divergence, 
where the  constant is characterized by the optimal constant
for a certain Bernstein's inequality on $\R^d$.
We also go over the construction of the focusing Gibbs measure  
with a cubic interaction.
In the appendices, 
we present  (a)~non-normalizability of the Gibbs measure
for the two-dimensional Zakharov system
and  (b)~the construction of 
focusing quartic Gibbs measures with smoother base Gaussian measures, 
 showing  
a critical nature of 
 the log-correlated Gibbs measure
with  a focusing quartic interaction.

\end{abstract}


\maketitle


\tableofcontents

\newpage

\section{Introduction}

\subsection{Log-correlated Gibbs measures}
\label{SUBSEC:1.1}

In this paper, we study 
the Gibbs measure 
$\rho$ 
on 
the $d$-dimensional torus on $\T^d = (\R/2\pi\Z)^d$, formally written as\footnote{In this introduction, we keep our discussion at a formal level and  do not worry about renormalizations.
While we keep the following discussion only to the real-valued setting, 
our results also hold in the complex-valued setting, 
where  $k \ge 4$ is an even integer and 
 $u^k$ in \eqref{Gibbs1}
is replaced by 
 $|u|^k$.
See Footnote \ref{FT1}.

Hereafter, 
we  use $Z$, $Z_{N}$, etc.~to denote various normalization constants
whose values may change line by line.}
\begin{align}
d\rho(u) = Z^{-1} \exp \bigg(\frac{\ld}{k} \int_{\T^d} u^k dx\bigg) d\mu(u), 
\label{Gibbs1}
\end{align}

\noi
where $k \geq 3$ is an integer and 
the coupling constant
$\ld \in \R\setminus \{0\}$ denotes the strength of interaction, 
which is repulsive (i.e.~ defocusing) when $\ld < 0$ and $k$ is even, and is attractive 
(i.e.~focusing) when $\ld > 0$ or $k$ is odd.\footnote{In this paper, by ``focusing'', we mean ``non-defocusing''.
Namely, 
$\ld > 0$ or $k$ is odd in \eqref{Gibbs1}.} 
Here, 
 $\mu$ is the log-correlated Gaussian free field on~$\T^d$, 
 formally given by 
\begin{align}
d \mu 
= Z^{-1} e^{-\frac 12 \| u\|_{H^{d/ 2} }^2    } du
& =  Z^{-1} \prod_{n \in \Z^d} 
e^{-\frac 12 \jb{n}^{d} |\ft u(n)|^2}   
d\ft u(n) , 
\label{gauss0}
\end{align}

\noi
where 
$\jb{\,\cdot\,} = (1+|\,\cdot\,|^2)^\frac{1}{2}$
and $\ft u(n)$  denotes the Fourier coefficient  of $u$. 
When $d = 2$, 
$\mu$ corresponds to 
the massive Gaussian free field on $\T^2$. 
Recall that this Gaussian measure $\mu$ is 
nothing but the induced probability measure under the map:\footnote{By convention, 
we endow $\T^d$ with the normalized Lebesgue measure $dx_{\T^d}= (2\pi)^{-d} dx$
so that we do not need to worry about the factor $2\pi$ in various places.
 For simplicity of notation, we use $dx$ to denote the standard Lebesgue measure $\R^d$ and the normalized Lebesgue measure on $\T^d$ in the following.}
\begin{equation} 
\o\in \O \longmapsto u(\o) = \sum_{n \in \Z^d } \frac{ g_n(\o)}{\jb{n}^\frac d2} e_n, 
\label{IV2}
\end{equation}

\noi
where  $e_n=e^{i n\cdot x}$
and 
$\{ g_n \}_{n \in \Z^d}$
is a sequence of mutually independent standard complex-valued
Gaussian random variables on 
a probability space $(\O,\F,\PP)$ 
conditioned that 
 $g_{-n} = \cj{g_n}$.\footnote{In particular, $g_0$ is a standard real-valued
 Gaussian random variable.
 When $n \in \N$,  
 $\Re g_n$ and $\Im g_n$ are real-valued Gaussian random variables
 with mean 0 and variance $\frac 12$.}
 See Remark \ref{REM:corr}.
 It is well known that a typical function $u$
 in the support of $\mu$ is merely a distribution
 and thus a renormalization 
 on the potential energy $\frac \ld k \int_{\T^d} u^k dx$
 is required 
 for the construction of the Gibbs measure $\rho$.

Our main goal in this paper is to study 
the Gibbs measure $\rho$ in \eqref{Gibbs1} 
in the focusing case.
In particular, 
we prove 
the non-normalizability of the focusing Gibbs measure $\rho$
with the quartic interaction ($\ld > 0$ and $k = 4$). See Theorem \ref{THM:1}.
We also present a brief discussion on  the construction of the Gibbs measure  
with the cubic interaction. 
See Theorem \ref{THM:2}.

 Before proceeding further, let us first go over the defocusing case:
 $\ld < 0$ and $k \geq 4$ is an even integer.
 When $d = 2$, 
 the defocusing Gibbs measure $\rho$ in \eqref{Gibbs1}
 corresponds  
to the well-studied $\Phi^k_2$-measure
whose construction 
follows
from 
 the hypercontractivity of the Ornstein-Uhlenbeck semigroup
 (see Lemma \ref{LEM:hyp}) and Nelson's estimate~\cite{Nelson}. 
See \cite{Simon, GlimmJ87, DT, OTh}.
For a general dimension $d \geq 1$, 
the same argument allows us to construct
the defocusing Gibbs measure~$\rho$ in~\eqref{Gibbs1}
for any $\ld < 0$ and any even integer $k \ge 4$.
Let us briefly go over the procedure.

Given $N \in \N$, we define the  frequency projector\footnote{We may also proceed with 
regularization  via mollification.}  $\pi_N$  by 
\begin{align}
\pi_N f = 
\sum_{ |n| \leq N}  \ft f (n)  e_n.
\label{pi}
\end{align}

\noi
For   $u$   as in \eqref{IV2}, 
set $u_N = \pi_N u$.
Then, for each fixed $x \in \T^d$, 
$u_N(x)$ is
a mean-zero real-valued Gaussian random variable with variance
\begin{align}
\s_N = \E\big[u_N^2(x)\big] = \sum _{|n| \le N} \frac1{\jb{n}^d}
\sim  \log N \too \infty, 
\label{sigma1}
\end{align}

\noi
as $N \to \infty$. Note that $\s_N$ is independent of $x \in \T^d$
in the current translation invariant setting.
We then define the renormalized power (= Wick power)
$:\! u_N^k\!:$
by setting
\begin{align}
:\! u_N^k (x) \!: \, \stackrel{\text{def}}{=} H_k(u_N(x); \s_N), 
\label{Wick1}
\end{align}

\noi
where 
$H_k(x;\s)$ is the Hermite polynomial of degree $k$
with a variance parameter $\s$
defined through the following generating function:\footnote{\label{FT1}In the complex-valued setting
(with even $k$), we use the Laguerre polynomial
$c_k L_{\frac k2} (|u_N|^2; \s_N)$ to define the Wick renormalization.
See \cite{OTh}.}
\begin{equation}
F(t, x; \s) \stackrel{\text{def}}{=}  e^{tx - \frac{1}{2}\s t^2} = \sum_{k = 0}^\infty \frac{t^k}{k!} H_k(x;\s).
\label{Herm0}
 \end{equation}
	
\noi
For readers' convenience, we write out the first few Hermite polynomials:
\begin{align*}
\begin{split}
& H_0(x; \s) = 1, 
\quad 
H_1(x; \s) = x, 
\quad
H_2(x; \s) = x^2 - \s,   
\quad
 H_3(x; \s) = x^3 - 3\s x.
\end{split}
\end{align*}

\noi
See, for example, \cite{Kuo}, 
for further properties of the Hermite polynomials.
We then define the following renormalized truncated potential energy:
\begin{align}
\begin{split}
R_N(u)=\frac \ld k\int_{\T^d}  :\! u_N^k \!:  dx, 
\end{split}
\label{Wick0}
\end{align}

\noi
where the coupling constant
$\ld < 0 $ denotes the strength of repulsive interaction.
A standard computation allows
us to show that 
 $\{R_N \}_{N \in \N}$
forms a Cauchy sequence in $L^p(\mu)$
for any finite $p \geq 1$, 
thus converging to some 
 random variable  $R(u)$:
\begin{equation}
\lim_{N\rightarrow\infty} R_N(u)=R(u)
\label{conv1}
\end{equation}

\noi
in 
$L^p(\mu)$ and almost surely
See, for example, Proposition 1.1 in \cite{OTh}.\footnote{The claimed almost sure convergence
follows form the $L^p(\O)$-convergence in 
\cite[Proposition 1.1]{OTh}
together with the Borel-Cantelli lemma.}

Define  the renormalized truncated  Gibbs measure $\rho_{N}$ by 
\begin{align*}
d \rho_{N} (u) = Z_{N}^{-1} e^{R_N(u)} d \mu(u).
\end{align*}

\noi
Then, 
a standard application of Nelson's estimate\footnote{One may also prove the uniform 
 exponential integrability bound \eqref{exp1}
 via the variational approach as in \cite{BG}, 
 using the Bou\'e-Dupuis variational formula (Lemma \ref{LEM:var3}).
 When $k$ is large, however, the combinatorial complexity 
for the variational approach
 may be  cumbersome, 
 while there is no such combinatorial issue in 
 the approach of~\cite{DT, OTh}.
} yields
the following 
uniform exponential integrability of the density;
given  any finite $ p \ge 1$, 
there exists $C_{p, d} > 0$ such that 
\begin{equation}
\sup_{N\in \N} \Big\| e^{R_N(u)}\Big\|_{L^p( \mu)}
\leq C_{p,d}  < \infty.
\label{exp1}
\end{equation}

\noi
See, for example, Proposition 1.2 in \cite{OTh}.
Then, the  uniform bound \eqref{exp1}
together with 
softer convergence in measure (as a consequence of \eqref{conv1})
 implies the following $L^p$-convergence of the density:
\begin{equation*}
\lim_{N\rightarrow\infty}e^{ R_N(u)}=e^{R(u)}
\qquad \text{in } L^p( \mu).
\end{equation*}

\noi
See, for example, Remark 3.8 in \cite{Tz08}.
This allows us to construct the defocusing Gibbs measure:
\begin{align*}
d\rho(u)= Z^{-1} e^{R(u)}d\mu(u)
\end{align*}

\noi
as a limit of the truncated defocusing Gibbs measure $\rho_N$.

As mentioned above, our main goal is to study the Gibbs measure
$\rho$ with the log-correlated Gaussian field $\mu$
in the focusing case ($\ld > 0$).
Before doing so, we present a brief discussion 
on dynamical problems
associated with these Gibbs measures
in Subsection \ref{SUBSEC:PDE}.
We then present the non-normalizability 
of the focusing log-correlated Gibbs measure 
with the quartic interaction (Theorem \ref{THM:1})
and the construction of the focusing log-correlated Gibbs measure 
with the cubic interaction (Theorem \ref{THM:2}).

\begin{remark}
\label{REM:corr}
\rm 

Recall from 
\cite[(4,2)]{AS} that 
the Green's function $G_{\R^d}$ for $(1 - \Dl)^\frac{d}{2}$
on $\R^d$
satisfies 
\begin{align}
G_{\R^d}(x) = -c_d  \log|x| + o(1) 
\label{G1}
\end{align}

\noi
as $x \to 0$ for some $c_d > 0$.
Here, in view of the translation invariance, 
we view $G$ as a function of one variable
through $G(x) \equiv G(x, 0)$.
It is  a smooth function on $\R^d \setminus \{0\}$
and  decays exponentially as $|x| \to \infty$;
see \cite[Proposition 1.2.5]{Gra2}.

Now, 
let $G$ be the Green's function
for $(1-\Dl)^\frac{d}{2}$ on $\T^d$.
Then, 
we have 
\begin{align}
G \deff (1-\Delta)^{-\frac{d}{2}} \dl_0 = \sum_{n\in\Z^d}\frac 1{\jb{n}^{d}}e_n
= \lim_{N \to \infty} \sum_{|n|\leq N}\frac 1{\jb{n}^{d}}e_n.
\label{G1a}
\end{align}

\noi
Recall the Poisson summation formula (\cite[Theorem 3.2.8]{Gra1}):
\begin{align}
\sum_{n \in \Z^d} \F_{\R^d} (f)(n)e_n(x)=\sum_{m\in \Z^d} f(x+2\pi m), \quad x \in \R^d, 
\label{Poss}
\end{align}
\noi
for any function $f$ on $\R^d$
such that $|f(x)|\les \jb{x}^{-d-\dl}$ for some $\dl > 0$
and 
$\sum_{n \in \Z^d} |\F_{\R^d} (f)(n)| < \infty$.
The Poisson summation formula \eqref{Poss}
 is a typical tool 
 to pass information from $\R^d$ to a periodic torus $\T^d$;
 see~\cite{BO, OW, BOZ} for example.
 Here, $\F_{\R^d} (f)(n)$ denotes the Fourier transform of $f$ on $\R^d$ given by 
 \begin{align}
  \F_{\R^d} (f)(n) = \frac 1{(2\pi)^d} \int_{\R^d} f(x) e_{-n}(x) dx,
\label{FF1}
 \end{align}
 
 \noi
 where $dx = dx_{\R^d}$ is the standard Lebesgue measure on $\R^d$.
 Then, by applying \eqref{Poss} 
 (with a frequency truncation $\pi_N$ and taking $N \to \infty$) together with the asymptotics \eqref{G1}, we conclude that there exists
a smooth function $ R$ such that 
\begin{align}
G(x) = - c_d \log|x| +  R(x)
\label{G2}
\end{align}

\noi
for any $x \in \T^d \setminus \{0\}$.
See \cite[Section 2]{ORSW} for a related discussion.
Finally, from \eqref{IV2}, \eqref{G1a}, and~\eqref{G2}, 
we obtain
\begin{align*}
\E_\mu \big[u(x) u(y)\big] = G(x-y) = 
- c_d \log|x-y| +  R(x-y)
\end{align*}

\noi
for any $x, y \in \T^d$ with $x\ne y$.

\end{remark}

\subsection{Dynamical problems associated with 
the log-correlated Gibbs measures}
\label{SUBSEC:PDE}

From the viewpoint
of mathematical physics such as 
Euclidean quantum field theory, 
the construction of the Gibbs measures $\rho$
in \eqref{Gibbs1} is of interest in its own right.
In this subsection, we briefly discuss 
some examples of dynamical problems
associated with these log-correlated Gibbs measures.
These examples show the importance of studying the log-correlated
Gibbs measure $\rho$ in \eqref{Gibbs1}
from the (stochastic) PDE point of view.

The associated energy functional\footnote{Once again, we do not worry about renormalizations
in this formal discussion.}  
for the Gibbs measure  $\rho$ in \eqref{Gibbs1} 
is given by
\begin{align}
E(u)
= \frac 12 \int_{\T^d} |(1-\Dl)^\frac{d}{4} u|^2 dx 
- \frac \ld k \int_{\T^d}  u^{k} dx.
\label{Gibbs2}
\end{align}

\noi
The study of the Gibbs measures for Hamiltonian PDEs,  initiated by \cite{Fried, LRS, BO94, McKean, BO96},
has been an active field of research over the last decade. 
We first list examples of 
the Hamiltonian PDEs
generated by this energy functional $E(u)$ in \eqref{Gibbs2}
along with the references.

\smallskip

\begin{itemize}
\item[(i)] 
fractional nonlinear Schr\"odinger equation
(for complex-valued $u$):
\begin{align}
i\dt u + (1-\Dl)^{\frac d2}u -\ld | u |^{k-2}u=0.
\label{H1}
\end{align}

\noi
The equation \eqref{H1} corresponds to 
the nonlinear half-wave equation
(also known as  
the semi-relativistic nonlinear Schr\"odinger equation)
when $ d= 1$, 
to 
 the well-studied cubic nonlinear Schr\"odinger equation (NLS)
when $d = 2$ (\cite{BO96, OTh, DNY}), 
and to
the biharmonic nonlinear  Schr\"odinger equation
when $d = 4$.

In Appendix \ref{SEC:A}, we also provide a brief discussion
on the Gibbs measure 
for the Zakharov system when $d = 2$.

\smallskip

\item[(ii)] fractional nonlinear wave equation:\footnote{For \eqref{H2}, 
we need to add $\frac 12 \int_{\T^d} (\dt u)^2 dx$ to the energy functional $E(u)$ in \eqref{Gibbs2}.} 
\begin{align}
\dt^2 u + (1- \Dl)^\frac{d}{2}  u  - \ld  u^{k-1}  = 0.
\label{H2}
\end{align}

\noi
The equation \eqref{H2} corresponds to 
the nonlinear wave equation (NLW)
(or the nonlinear Klein-Gordon equation)
when $d = 2$ (\cite{OTh2}), 
and to the nonlinear beam equation 
when $d = 4$.

\smallskip

\item[(iii)] generalized Benjamin-Ono equation (with $d = 1$):\footnote{For \eqref{H3}, 
the coefficient of the potential energy in \eqref{Gibbs2} is slightly different.
Thanks to the conservation of the spatial mean $\int_\T u dx$
under the generalized Benjamin-Ono equation~\eqref{H3}, we can work on the mean-zero functions.
In this case, we consider the Gibbs measure associated
with the massless log-correlated Gaussian field
by replacing $(1-\dx^2)^\frac{1}{4}$ in \eqref{Gibbs2} with
 $(-\dx^2)^\frac{1}{4}$.}
\begin{align}  
\dt u+\mathcal{H}\dx^2u -\ld \dx (u^{k-1})=0,
\label{H3}
\end{align}

\noi
where $\mathcal H$ denotes the Hilbert transform
defined by $\ft {\mathcal H f}(n) = -i \text{sgn}(n) \ft f(n)$
with the understanding that $\ft {\mathcal H f}(0) = 0$.
The equation \eqref{H3} is known
as the Benjamin-Ono equation when $k = 3$ (\cite{TzBO, Deng})
and the modified Benjamin-Ono equation when $k = 4$.

\end{itemize}

\smallskip

\noi
Next, we list stochastic PDEs associated with 
the Gibbs measure $\rho$ in \eqref{Gibbs1}.

\smallskip

\begin{itemize}
\item[(iv)] parabolic stochastic quantization equation
\cite{PW}:
\begin{align}
 \dt u +  (1 -  \Dl)^\frac{d}{2}  u -\ld u^{k-1}  =  \sqrt 2\xi.
\label{H4}
\end{align}

\noi
Here, $\xi$ denotes the space-time white noise on $\T^d \times \R_+$.
When $d = 2$ and $\ld < 0$, \eqref{H4} 
corresponds to the standard parabolic $\Phi^k_2$-model
(\cite{DPD, RZZ, TsW}).

\smallskip

\item[(v)] canonical  stochastic quantization equation
\cite{RSS}:
\begin{align}
\dt^2 u + \dt u + (1 -  \Dl)^\frac{d}{2}  u -\ld u^{k-1}  = \sqrt{2} \xi.
\label{H5}
\end{align}

\noi
The equation \eqref{H5} corresponds to 
the stochastic damped nonlinear wave equation
when $d = 2$ (\cite{GKO, GKOT, Tolomeo3}), 
and to the stochastic damped nonlinear beam equation 
when $d = 4$.

\end{itemize}

\smallskip

\noi
When $d = 2$, 
the conservative  stochastic Cahn-Hilliard equation is known to 
(formally) preserve the Gibbs measure $\rho$ in \eqref{Gibbs1} (\cite{RYZ}).

For the equations listed above, 
once 
we establish  local well-posedness almost surely
with respect to the Gibbs measure initial data, 
Bourgain's invariant measure argument \cite{BO94, BO96}
allows us to construct almost sure global dynamics
and to prove invariance of the Gibbs measure.
However, 
since functions on the support
of the
log-correlated Gibbs measure~$\rho$ in \eqref{Gibbs1}
almost surely belong to  the $L^p$-based Sobolev spaces $W^{s, p}(\T^d)
\setminus L^p(\T^d)$ only for $ s < 0$ with any $1 \le p \le \infty$, 
there are only a handful of the well-posedness results
\cite{BO96, Deng, OTh2, GKO, DNY}
for the Hamiltonian PDEs mentioned above (including \eqref{H5}).

\begin{remark}\rm

We point out that as long as we can construct the Gibbs measure, 
a compactness argument 
with invariance of the truncated Gibbs measures 
and Skorokhod's theorem allows
us to construct (non-unique) global-in-time dynamics 
along with invariance of the Gibbs measure in some mild sense.
See \cite{AC, DPD2, BTTz, OTh, ORTh}.
In our current setting, this almost sure global existence
result holds
for (i) the defocusing case ($\ld < 0$ and even $k \geq 4$; see the discussion in Subsection~\ref{SUBSEC:1.1})
and (ii) the quadratic nonlinearity (i.e.~$k = 3$).
See Theorem \ref{THM:2} 
for the latter case.

\end{remark}

\begin{remark}\rm

Given $\dl > 0$, 
consider 
the  intermediate long wave equation (ILW) on $\T$: 
\begin{align}
\dt u -
 \Gdl \partial_x^2 u  - \dx (u^{2})    = 0, 
\label{ILW1}
\end{align}

\noi
where the dispersion operator $\Gdl $ is given by 
\begin{align}
\ft{\Gdl f}(n)  =  -  i \Big( \coth(\dl n )-\frac{1}{\dl  n} \Big) \ft f(n)\,, 
 \hspace{1mm} 
\quad n \in\Z.
\label{OP}
\end{align}

\noi
The equation \eqref{ILW1}
 models 
the internal wave propagation of the interface 
in a stratified fluid of finite depth $\dl > 0$, 
 providing 
 a natural connection between 
the  Benjamin-Ono regime ($\dl = \infty$) and 
the KdV regime ($\dl = 0$).
Indeed, there are results establishing convergence of 
ILW to the Benjamin-Ono equation (and the KdV equation)
as $\dl \to \infty$ (and $\dl \to 0$, respectively); see~\cite{Li, CFLOP, CLOP} and the references therein.
While it is not obvious from the rather complicated dispersive symbol in \eqref{OP}, 
the Gibbs measure associated to ILW is indeed log-correlated,
and the results in this paper apply 
to the Gibbs measure associated to the generalized ILW (where 
the nonlinearity $\dx (u^2)$ in \eqref{ILW1} is replaced by 
$\ld \dx (u^{k-1})$).   
Furthermore, as $\dl \to \infty$ (and $\dl\to0$), 
the Gibbs measure for the (generalized) ILW converges to that for 
the (generalized) Benjamin-Ono equation (and the (generalized) KdV equation, respectively)
in an appropriate sense.
See a recent work~\cite{LOZ} for a further discussion.
See also \cite{CLOZ}
for the construction and convergence of invariant measures
for ILW associated with higher order conservation laws.

\end{remark}

\subsection{Non-normalizability of the focusing Gibbs measure}
\label{SUBSEC:1.3}

We now turn our attention to  the focusing case.
In this subsection, we study the Gibbs measure $\rho$ in 
\eqref{Gibbs1}
with  the focusing quartic interaction
($\ld > 0$ and $k = 4$).
In this case, we prove the following 
 non-normalizability 
of the (renormalized) focusing Gibbs measure~$\rho$.

\begin{theorem}\label{THM:1}

Let  $\ld > 0$ and $k = 4$.  
Then,  given any $K > 0$,  we have
\begin{align}
\begin{split}
\sup_{N\in \N} Z_{K,N}\deff \sup_{N \in \N}  \E_{\mu} \Big[ \ind_{\{|\int_{\T^d} \, :u_N^2: \, dx| \, \leq K\}}  e^{ R_N (u)} \Big] = \infty        , 
\end{split}
\label{Gibbs4}
\end{align}

\noi
where $R_N$ is the renormalized potential energy defined in  \eqref{Wick0}
with $k = 4$. Moreover, the divergence rate of $Z_{K,N}$  is given by
\begin{align}
\log Z_{K,N} = \ld\frac{C_B}4 N^d \s_N^2 (1+o(1)) \sim N^d (\log N)^2, 
\label{divrate}
\end{align} 
as $N\to \infty$.
Here,  $C_B$ is the optimal constant in Bernstein's inequality\textup{:} 
\begin{equation} \label{bernstein}
\|P f\|_{L^4(\R^d, \frac{dx}{(2\pi)^d})}^4 \le C_B \|f\|_{L^2(\R^d, \frac{dx}{(2\pi)^d})}^4,
\end{equation}
where $P$ is the sharp Fourier projection 
onto the unit ball\textup{:}
$$ \widehat{Pf}(\xi) = \ind_{\{|\xi|\le 1\}} \widehat{f}(\xi), $$
and $\s_N$ is defined in \eqref{sigma1}.
Moreover, we have 
\begin{equation} \label{nonnormalisability}
Z_K \deff \E_{\mu} \Big[ \ind_{\{|\int_{\T^d} \, :u^2: \, dx| \, \leq K\}}  e^{ R (u)} \Big] = \infty, 
\end{equation}

\noi
where $R(u)$ is the limit of $R_N(u)$ defined in \eqref{conv1}.
 In particular, the focusing Gibbs measure \textup{(}even with a Wick-ordered $L^2$-cutoff\textup{)} 
can not be defined as a probability measure. 
	
\end{theorem}

When $d = 2$, 
Theorem \ref{THM:1} provides an alternative proof of the non-normalizability 
result
for 
of the focusing $\Phi^4_2$-measure  
due to  Brydges and Slade \cite{BS}
whose proof
 is based on analysis of a model closely related to 
the Berlin-Kac spherical model.
Furthermore, Theorem \ref{THM:1} provides a precise rate 
\eqref{divrate}
of divergence of 
the partition function $Z_{K, N}$.
Our strategy for proving the divergence rate \eqref{divrate}
is straightforward and thus is expected to be applicable to a wide range of models.

Our proof of Theorem \ref{THM:1} is based on the variational approach
due to Barashkov and Gubinelli~\cite{BG}.
More precisely, we will rely on 
the Bou\'e-Dupuis variational formula \cite{BD, Ust}; see Lemma~\ref{LEM:var3}.
Our main task is  to 
 construct a drift term  which  achieves the desired divergence~\eqref{Gibbs4}.
Our argument is inspired by   recent works 
by the third author with Weber~\cite{TW}
and by the first and third authors with Okamoto
\cite{OOT, OOTolo}.
In particular, our presentation closely follows but refines that in \cite{OOT}, 
where an analogous non-normalizability is shown
for focusing Gibbs measures on $\T^3$ with a quartic interaction of Hartree-type.
We point out that the argument in~\cite{OOT}
 shows  non-normalizability only for  large $K \gg 1$
 and thus we need to refine the argument to prove the divergence~\eqref{Gibbs4}
 for {\it any} $K > 0$. 
 The main new ingredient (as compared to \cite{OOT}) is  the construction a drift term which approximates a 
 blowup profile, such that the Wick-ordered $L^2$-cutoff does not exclude this blowup profile for \textit{any} cutoff size $K >0$. See, in particular,  Lemma \ref{LEM:leo2} and the proof of \eqref{pa5}. 
 We also mention  related works
\cite{LRS, BS, Rider, BB14, OST,  RSTW} on the non-normalizability (and other issues)
for focusing Gibbs measures.

\begin{remark}\rm
As a direct consequence of \eqref{Gibbs4}, we have
\begin{align*}
\sup_{N \in \N}  \E_{\mu} \Big[  e^{ R_N (u)} \Big]
& \ge
\sup_{N \in \N}  \E_{\mu} \Big[ \ind_{\{\int_{\T^d} \, :u_N^2: \, dx \, \leq K\}}  e^{ R_N (u)} \Big]\\
& \ge
\sup_{N \in \N}  \E_{\mu} \Big[ \ind_{\{|\int_{\T^d} \, :u_N^2: \, dx| \, \leq K\}}  e^{ R_N (u)} \Big] = \infty.
\end{align*}

\end{remark}

\begin{remark}\rm 

In the one-dimensional setting studied in \cite{LRS, OST}, 
the sharp Gagliardo-Nirenberg inequality on~$\R$ plays 
an important role in determining (non-)normalizability 
of the focusing Gibbs measure with a sextic interaction.
In our current problem with a quartic interaction, 
Bernstein's inequality \eqref{bernstein} on $\R^d$, 
which is essentially a frequency-localized version of Sobolev's inequality, 
plays a crucial role in determining the precise divergence rate \eqref{divrate}.
We point out that this particular form of Bernstein's inequality appears
due to the form of the regularization we use for our problem
(namely, the sharp frequency truncation onto the frequencies $\{|n|\le N\}$).
In the current singular setting where a renormalization is required, 
we need to start with a regularized problem.
However, 
there are different ways to regularize a problem, and different regularizations
lead to different divergence rates.
For example, if we instead use a smooth frequency truncation,
we would obtain a divergence rate with a different constant (while
the essential rate $N^d (\log N)^2$ in \eqref{divrate} remains the same).

\end{remark}

\begin{remark}\rm	
(i) An analogous non-normalizability result
holds for a focusing Gibbs measure with
the quartic interaction
even if we endow it with taming  by the Wick-ordered $L^2$-norm.
See Remark \ref{REM:NLW}.

\smallskip

\noi
(ii)
By controlling combinatorial complexity, 
we can extend the non-normalizability result in Theorem \ref{THM:1}
to the higher order interactions $k \geq 5$ in the focusing case
(i.e.~either $k$ is odd or $\ld > 0$ when $k$ is even).

\smallskip

\noi
(iii)
In terms of dynamical problems, 
Theorem \ref{THM:1} states
that Gibbs measures
associated with 
the equations 
listed  in Subsection \ref{SUBSEC:PDE}
do not exist for (i) $\ld> 0$ and $k \ge 4$
or (ii) odd $k \ge 5$.
This list in particular includes 

\smallskip

\begin{itemize}
\item
the focusing $L^2$-(super)critical fractional NLS \eqref{H1}
(including the focusing (super)cubic NLS on $\T^2$), 

\smallskip
\item 
the focusing $L^2$-(super)critical fractional NLW \eqref{H2}
(including the focusing (super)cubic NLW on $\T^2$
and  the focusing (super)cubic nonlinear beam equation on $\T^4$),

\smallskip
\item 
the focusing modified Benjamin-Ono equation \eqref{H3}
(and  
the focusing generalized Benjamin-Ono equation with $k \ge 5$).

\smallskip

\end{itemize}
See also Appendix \ref{SEC:A} for a brief discussion
on the two-dimensional Zakharov system.

\end{remark}

\begin{remark}\label{REM:phase}\rm
In a recent work \cite{OST}, the first and third authors with Okamoto 
studied the construction of the $\Phi^3_3$-measure on $\T^3$
(i.e.~\eqref{Gibbs1} with $d = 3$ and $k = 3$), and 
established 
the following phase transition: normalizability in the weakly nonlinear regime ($|\ld|\ll 1$)
and 
 non-normalizability in the strongly nonlinear regime ($|\ld|\gg 1$), 
 where the latter result was obtained based on the strategy in the current paper.
In particular, in view of the non-normalizability of the $\Phi^3_3$-measure
in the strongly nonlinear regime, 
we expect that the same approach would 
yield non-normalizability 
of the focusing $\Phi^k_3$-measure 
for $k \ge 4$ (namely, (i) for even $k \ge 4$ with $\ld > 0$
or (ii) for odd $k \ge 5$ with $\ld \ne 0$).
\end{remark}

\subsection{Gibbs measure with the cubic interaction}
\label{SUBSEC:1.4}

Let us first go over the focusing Gibbs measure construction in the two-dimensional setting.
In \cite{BO95}, 
Bourgain reported Jaffe's construction of a  $\Phi^3_2$-measure endowed with a Wick-ordered
$ L^2$-cutoff:
\begin{align}
d\rho(u) = Z^{-1}
\ind_{\{\int_{\T^2} :\,u^2: \, dx\, \leq K\}} 
e^{ \int_{\T^2}  :u^3: \, dx  }d \mu(u) .
\label{Gibbs5}
\end{align}

\noi 
Note that 
the measure in \eqref{Gibbs5}
is not suitable to generate any NLS\,/\,NLW\,/\,heat dynamics since
(i)~the renormalized cubic power $:\!u^3 \!:$ makes sense only 
in the real-valued setting and hence is not suitable for the Schr\"odinger equation
 and (ii) NLW and the heat equation do not preserve the $L^2$-norm of a solution
 and thus are incompatible with the Wick-ordered $L^2$-cutoff.
In \cite{BO95}, 
Bourgain instead proposed to consider the  Gibbs measure
of the form:\footnote{The choice of the exponent $\g=2$ in $A\big(\int_{\T^2} :u^2:  dx\big)^{\g}$ (with $A\gg 1$) is optimal. See Remark \ref{REM:exp}}.
\begin{align}
d\rho(u ) = Z^{-1}
e^{ \int_{\T^2}  :u^3: \, dx  - A
\big(\int_{\T^2} :\,u^2: \, dx\big)^2} d \mu( u ) 
\label{Gibbs6}
\end{align}

\noi
(for sufficiently large $A>0$)
in studying NLW dynamics on $\T^2$.\footnote{For the NLW dynamics, 
we need to couple $\rho$ on the $u$-component  with the white noise measure $\mu_0$ on the $\dt u$-component (which is independent from $\rho$). More precisely, the Gibbs measure is of the form $\rhoo=\rho  \otimes \mu_0$, 
where the $\Phi^3_2$-measure $\rho$ in \eqref{Gibbs6} is on the $u$-component and the white noise measure  $\mu_0$ is on the $\dt u$-component.}

We now extend the construction of the Gibbs measures in \eqref{Gibbs5}
and \eqref{Gibbs6} to a general dimension $d \geq 1$.
Given $N \in \N$, let 
\begin{align}
\begin{split}
R_N^\diamond (u)
&=  \frac \ld 3\int_{\T^d}  :\! u_N^3 \!:  dx
- A \, \bigg( \int_{\T^d} :\! u_N^2 \!: dx\bigg)^2, 
\end{split}
\label{K2}
\end{align}

\noi
where the coupling constant
$\ld \in \R\setminus  \{0\} $ denotes the strength of cubic   interaction, 
and 
define the truncated renormalized Gibbs measure $\rho_{N}$ by 
\begin{align}
d \rho_{N} (u) = Z_{N}^{-1} e^{R_N^\diamond(u)} d \mu(u).
\label{GibbsN1}
\end{align}

\noi
Then, we have the following result 
for  the focusing Gibbs measure with a cubic interaction.

\begin{theorem} \label{THM:2}
Let  $\ld \in \R\setminus\{0\}$.
 Given any finite $ p \ge 1$, 
  there exists sufficiently large $A = A(\ld, p) > 0$ 
such that 
$R_N^\diamond $  in \eqref{K2}
converges to some limit $R^\diamond$
in $L^p(\mu)$.
Moreover, 
there exists $C_{p, d, A} > 0$ such that 
\begin{equation}
\sup_{N\in \N} \Big\| e^{R_N^\diamond(u)}\Big\|_{L^p(\mu)}
\leq C_{p,d, A}  < \infty.
\label{exp3}
\end{equation}

\noi
In particular,  we have
\begin{equation}
\lim_{N\rightarrow\infty}e^{ R_N^\diamond(u)}=e^{R^\diamond(u)}
\qquad \text{in } L^p(\mu).
\label{exp4}
\end{equation}

\noi
As a consequence, 
the truncated renormalized Gibbs measure $\rho_{N}$ in \eqref{GibbsN1} converges, in the sense of \eqref{exp4}, 
to the focusing Gibbs measure $\rho$ given by
\begin{align*}
d\rho(u)= Z^{-1} e^{R^\diamond(u)}d\mu(u).
\end{align*}
	
\noi
Furthermore, 
the resulting Gibbs measure $\rho$ is equivalent 
to the log-correlated Gaussian  field~$\mu$.
	
\end{theorem}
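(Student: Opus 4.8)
The plan is to establish Theorem \ref{THM:2} via the Bou\'e--Dupuis variational formula (Lemma \ref{LEM:var3}), following the strategy of Barashkov--Gubinelli \cite{BG}. Writing $u$ under $\mu$ via the Wiener process $W$ associated with the log-correlated field, we have $W_N = \pi_N W$ playing the role of $u_N$. The Bou\'e--Dupuis formula gives
\begin{align*}
-\log \E_\mu\big[e^{\RR_N(u)}\big]
= \inf_{\dot\Upsilon}
\E\bigg[ -\RR_N(W + \Upsilon) + \frac12\int_0^1 \|\dot\Upsilon_t\|_{H^{d/2}}^2\, dt\bigg],
\end{align*}
where the infimum runs over drifts $\dot\Upsilon$ adapted to the filtration, and $\Upsilon = \int_0^1 \dot\Upsilon_t\, dt$. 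The goal is a \emph{uniform in $N$} lower bound on $-\log\E_\mu[e^{\RR_N}]$, i.e.\ a uniform upper bound on $\E_\mu[e^{\RR_N}]$; the $L^p$ bound \eqref{exp3} then follows by the same argument applied to $p\RR_N$ (absorbing $p$ into the constant $A$), and \eqref{exp4} follows from \eqref{exp3} together with the $L^p(\mu)$-convergence of $\RR_N$ to $\RR$ via the usual soft argument (convergence in measure plus uniform integrability), exactly as in the defocusing discussion in Subsection \ref{SUBSEC:1.1}.

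The main estimates to carry out are the following. First, the $L^p(\mu)$-convergence of $\RR_N$: the Wick powers $\int_{\T^d}:\!u_N^3\!:dx$ and $\int_{\T^d}:\!u_N^2\!:dx$ each form Cauchy sequences in every $L^p(\mu)$ by a standard computation (as in \eqref{conv1}), and then $(\int :\!u_N^2\!:)^2$ converges in $L^p$ as well; hence $\RR_N \to \RR$ in $L^p(\mu)$. Second, and the heart of the matter, is the drift estimate. Expanding $\RR_N(W+\Upsilon)$ multilinearly, the dangerous term is the purely cubic Gaussian term $\frac\s3\int_{\T^d}:\!W_N^3\!:dx$, which is \emph{not} sign-definite and has large fluctuations. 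One controls it using the quartic taming term: by a Wick-ordering / Young-type estimate, the cross terms involving $\Upsilon$ (such as $\int :\!W_N^2\!:\Upsilon_N\,dx$) are bounded by a small multiple of $\|\Upsilon\|_{H^{d/2}}^2$ plus Gaussian contributions, while the pure Gaussian cubic term satisfies, for any $\eps>0$,
\begin{align*}
\bigg|\frac\s3\int_{\T^d}:\!W_N^3\!:dx\bigg|
\le \eps\, \bigg(\int_{\T^d}:\!W_N^2\!:dx\bigg)^2 + C_\eps\big(1 + |\mathcal{Z}_N|\big),
\end{align*}
where $\mathcal Z_N$ is a random variable bounded in every $L^p(\mu)$ (a polynomial chaos of the Gaussian data). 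Choosing $\eps$ small and then $A>2\eps^{-1}$ (or so), the quartic term $-A(\int:\!W_N^2\!:)^2$ dominates and everything is absorbed, leaving a bound uniform in $N$ and in the drift. Care is needed because the taming is in terms of $\int :\!W_N^2\!:dx$ but we need to also control $\int :\!(W_N+\Upsilon_N)^2\!:dx$; one handles this by expanding and using $\int\Upsilon_N^2\,dx \le \|\Upsilon\|_{H^{d/2}}^2$ (here the log-correlated, i.e.\ $H^{d/2}$, nature is exactly what makes the interpolation close) together with Cauchy--Schwarz on the cross term $\int :\!W_N^2\!:\Upsilon_N\,dx \lesssim \delta(\int:\!W_N^2\!:)^2 + \delta^{-1}\|\Upsilon\|_{L^2}^2$ and similar.

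The last assertion---that $\rho$ is equivalent to $\mu$---will follow from mutual absolute continuity: $\rho\ll\mu$ is immediate since $d\rho = Z^{-1}e^{\RR}d\mu$ with $e^\RR\in L^1(\mu)$ and $Z<\infty$; and $\mu\ll\rho$ holds because $e^{\RR}>0$ $\mu$-a.s.\ (it is the exponential of an a.s.\ finite random variable) together with $e^{-\RR}\in L^1(\rho)$, i.e.\ $\E_\mu[e^{\RR}\cdot e^{-\RR}] = 1<\infty$; more robustly one notes $e^{-\RR}\le e^{A(\int:\!u^2\!:dx)^2 - \frac\s3\int:\!u^3\!:dx}$ and a second application of the variational formula (now with the \emph{opposite} sign on the cubic term, which is harmless since $|{\cdot}|$ was used above) shows this is in $L^1(\mu)$, hence in $L^1(\rho)$ after multiplying by the bounded-in-$L^p$ density. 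I expect the main obstacle to be the drift estimate: making the cubic Gaussian term and all mixed $W$--$\Upsilon$ terms absorbable into $\eps(\int:\!W_N^2\!:)^2 + \eps\|\Upsilon\|_{H^{d/2}}^2$ \emph{uniformly in $N$}, with an explicit threshold for $A$ depending only on $\s$ and $p$, which requires the precise Gaussian hypercontractivity estimates (Lemma \ref{LEM:hyp}) for the relevant stochastic objects and careful bookkeeping of the constants.
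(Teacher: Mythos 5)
Your overall plan (Bou\'e--Dupuis variational formula, multilinear expansion of $\RR_N(Y+\Theta)$, absorption of the negative terms into the kinetic energy $\frac12\int_0^1\|\dr\|_{L^2}^2\,dt$ and the positivity coming from the taming) is the same as the paper's, and your treatment of the $L^p$-convergence of $\RR_N$, the passage from the uniform $L^p$ bound to \eqref{exp4}, and the equivalence $\rho\sim\mu$ (which just needs $e^{\RR}\in L^1(\mu)$ and $e^{\RR}>0$ $\mu$-a.s.) are all fine. However, there is a genuine conceptual gap at the ``heart of the matter.''

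You identify $\frac{\s}{3}\int_{\T^d}:\!W_N^3\!:\,dx$ as \emph{the} dangerous term and propose to control it via the taming using a pathwise Young-type inequality
$|\frac{\s}{3}\int :\!W_N^3\!:\,dx|\le\eps(\int :\!W_N^2\!:\,dx)^2 + C_\eps(1+|\mathcal{Z}_N|)$.
This misreads the mechanism of the variational argument. Inside the Bou\'e--Dupuis infimum one is bounding the \emph{expectation} $\W_N(\dr)=\E[-\RR_N(Y+\Theta)+\frac12\int_0^1\|\dr\|_{L^2}^2\,dt]$ from below, and the purely Gaussian cubic term contains no drift, so $\E[-\frac{\s}{3}\int :\!Y_N^3\!:\,dx]=0$ regardless of $\dr$; it simply drops out and needs no control at all. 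The actually dangerous contributions are the cross terms $\int :\!Y_N^2\!:\Theta_N$, $\int Y_N\Theta_N^2$ and the pure drift term $\int \Theta_N^3$, each of which is handled by a separate estimate (fractional Leibniz, interpolation, Young) against $\|\Theta_N\|_{H^{d/2}}^2$ \emph{and} $\|\Theta_N\|_{L^2}^4$; the taming's role (via expanding $A(\int(:\!Y_N^2\!:+2Y_N\Theta_N+\Theta_N^2))^2$) is precisely to produce the coercive $\frac{A}{4}\|\Theta_N\|_{L^2}^4$, at the cost of subtracting a controllable $(\int :\!Y_N^2\!:\,dx)^2$ and lower-order pieces. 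Your sketch does not provide the $\|\Theta_N\|_{L^2}^4$ bound for $\int\Theta_N^3$ (which is essential), and the mechanism you describe for the Gaussian cubic --- feeding $\eps(\int:\!W_N^2\!:)^2$ into the taming --- runs in the wrong direction: after expansion the taming \emph{subtracts} a $(\int:\!W_N^2\!:)^2$ term rather than producing one to absorb. Moreover, the inequality you write down is not a standard bound (a degree-three chaos cannot be dominated pathwise by a degree-four chaos up to an $L^p$-bounded tail in any useful way; if $\mathcal Z_N$ is allowed to be the left-hand side itself, the bound is vacuous). In short: the cubic Gaussian term costs nothing, and the effort you spend on it both misses the real work (the drift estimates with the $L^2$-quartic absorption) and introduces an estimate that is unnecessary and would be difficult to justify. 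Finally, the ``more robust'' argument you sketch for $\mu\ll\rho$ (showing $e^{-\RR}\in L^1(\mu)$) is neither needed --- positivity $e^{\RR}>0$ $\mu$-a.s.\ already gives mutual absolute continuity --- nor correct as stated, since $e^{-\RR}$ contains the factor $e^{+A(\int:\!u^2\!:)^2}$, which has no reason to be $\mu$-integrable.
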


As for the convergence of $R_N^\diamond$, we omit details since
the argument is standard.
See,  for example,  
\cite[Proposition 1.1]{OTh}, 
\cite[Proposition 3.1]{OTz}, 
 \cite[Lemma 4.1]{GOTW}, 
and \cite[Lemma  5.1]{OOT} for related details.
As mentioned in Subsection \ref{SUBSEC:1.1}, 
the main task is to prove 
the uniform integrability bound \eqref{exp3}.
Once this is done, the rest follows from a standard argument.
In Section \ref{SEC:4}, 
we establish the bound \eqref{exp3}
by using  the variational formulation.

\begin{remark} \label{REM:cub}\rm

Note that
\begin{align}
 \ind_{\{|\,\cdot \,| \le K\}}(x) \le \exp\big( -  A |x|^\gamma\big) \exp(A K^\g)
\label{H6}
\end{align}

\noi
for any $K, A , \g > 0$.
Then, 
 the following
uniform bound
for  the focusing cubic interaction:
\begin{equation*}
\sup_{N\in \N} \Big\|
\ind_{\{|\int_{\T^d} :\,u^2: \, dx| \leq K\}} 
 e^{R_N(u)}\Big\|_{L^p( \mu)}
\leq C_{p,d, K}  < \infty
\end{equation*}

\noi
for any $K > 0$
follows as a direct consequence
of the uniform bound \eqref{exp3} and \eqref{H6}
with $\g = 2$, 
where 
 $R_N$ is as in \eqref{Wick0} with $\ld \in \R\setminus\{ 0\}$ and $k = 3$.
This allows us to construct the log-correlated  Gibbs measure  
with the cubic interaction (with a Wick-ordered
$ L^2$-cutoff):
\begin{align*}
d\rho(u) = Z^{-1}
\ind_{\{|\int_{\T^d} :\,u^2: \, dx| \leq K\}} 
e^{ \frac \ld 3\int_{\T^d}  :u^3: \, dx  }d \mu(u) 
\end{align*}

\noi 
as a limit of its truncated version
(for any $\ld \in \R\setminus\{0\}$ and $ K > 0$).

\end{remark}

\begin{remark} \rm
 In \cite{TzBO}, Tzvetkov constructed the Gibbs
measure (with a Wick-ordered
$ L^2$-cutoff) for the  Benjamin-Ono equation \eqref{H3} with $k = 3$.
Theorem \ref{THM:2} and Remark \ref{REM:cub} provide
an alternative proof of the construction of the Gibbs
measure 
for the  Benjamin-Ono equation.
\end{remark}

\begin{remark}\label{REM:NLW} \rm
(i) It follows from  Theorem \ref{THM:1} and \eqref{H6} that 
an analogue of Theorem \ref{THM:2} fails
for the quartic interaction ($k = 4$). 
More precisely, we have
\begin{equation*}
\sup_{N\in \N} \bigg\| 
\exp \bigg(
\frac \ld 4 \int_{\T^d}  :\! u_N^4 \!:  dx
- A \, \Big| \int_{\T^d} :\! u_N^2 \!: dx\Big|^\g
\bigg)\bigg\|_{L^p(\mu)}
=  \infty
\end{equation*}
	
\noi
for any $\ld, A, \g > 0$.

\smallskip

\noi
(ii)
If we consider 
a smoother base Gaussian 
measure  $\mu_\al$, then 
we can prove the following uniform exponential integrability bound;
given any $\ld > 0$, $\al > \frac d2$, 
and finite $p \geq 1$, 
there exists
sufficiently large  $A = A(\ld, \al, p) > 0$
and $\g = \g(\al) > 0$
such that 
\begin{equation}
\sup_{N\in \N} \bigg\| 
\exp \bigg(
\frac \ld 4 \int_{\T^d}   u_N^4   dx
- A \, \Big( \int_{\T^d}  u_N^2  dx\Big)^\g 
\bigg)\bigg\|_{L^p(\mu_\al)}
\leq C_{p, d, A} < \infty.
\label{Q1}
\end{equation}
	
\noi
Here,    $\mu_\al$  denotes  the Gaussian measure
 with a formal density 
\begin{align}
d \mu_\al 
= Z^{-1} e^{-\frac 12 \| u\|_{H^{\al} }^2    } du.
\label{gauss1}
\end{align}

\noi
See Appendix \ref{SEC:B}
for the proof of \eqref{Q1}.
The bound \eqref{Q1}  allows us to construct 
the focusing Gibbs measure 
with a focusing quartic interaction
of the form:
\begin{align}
d\rho_\al = Z^{-1}
e^{ \frac \ld 4 \int_{\T^d}  u^4  dx  - A
\big(\int_{\T^d} u^2  dx\big)^\g} d \mu_\al .
\label{Q2}
\end{align}

\noi
Moreover, in view of \eqref{H6}, 
we can also construct the following 
 focusing Gibbs measure with an $L^2$-cutoff:
\begin{align}
d\rho_\al = Z^{-1}
\ind_{\{\int_{\T^d} |u|^2 dx \leq K\}} 
e^{ \frac \ld 4 \int_{\T^d}  |u|^4  dx 
} d \mu_\al 
\label{Q3}
\end{align}

\noi
for any $K > 0$.

In  \cite{STz1, STz2}, 
Sun and Tzvetkov recently studied the following 
fractional NLS on $\T$:
\begin{align}
i\dt u + (1-\dx^2)^{\al}u -\ld | u |^{2}u=0
\label{Q4}
\end{align}

\noi
in the defocusing case ($\ld < 0$).
They
 proved almost sure local well-posedness
of \eqref{Q4} with respect to the Gaussian measure $\mu_\al$
in \eqref{gauss1}
for $\al > \frac {31- \sqrt{233}}{28} \approx 0.562 \ (  \, > \frac 12)$,\footnote{See also a recent preprint \cite{LW}, 
where the authors covered the range $\al > \frac 12$.} 
which in turn yielded almost sure global well-posedness
with respect to the defocusing Gibbs measure
(namely, $\rho_\al$ in~\eqref{Q3} without an $L^2$-cutoff)
and invariance of the defocusing Gibbs measure.
Since their local result also holds
in the focusing case ($\ld>0$), 
our construction of the focusing Gibbs measure 
$\rho_\al$ in \eqref{Q3}
implies 
 almost sure global well-posedness of \eqref{Q4}
with respect to the focusing Gibbs measure  $\rho_\al$ in~\eqref{Q3} 
and its invariance under the dynamics of \eqref{Q4}
for the same range of $\al$.

\smallskip

\noi
(iii)
Theorem \ref{THM:1} and Part (ii) of this remark show that 
in the case of the focusing quartic interaction, 
there is no phase transition,  depending on the value of $\ld > 0$.
Compare this with the situation in \cite{OOT, OOTolo}, 
where such a phase transition
(as described in Remark \ref{REM:phase})
 was established in the critical case.
It may be of interest to pursue 
the issue of a possible phase transition
for a higher order focusing interaction, in the non-singular regime $\al > \frac d2$.

\end{remark}

\section{Preliminary lemmas}
\label{SEC:2}

In this section, we recall  basic definitions and lemmas  used in this paper.

Let $s \in \R$ and $1 \leq p \leq \infty$.
We define the $L^2$-based Sobolev space $H^s(\T^d)$
by the norm:
\begin{align*}
\| f \|_{H^s} = \| \jb{n}^s \ft f (n) \|_{\l^2_n}.
\end{align*}

\noi
We also define the $L^p$-based Sobolev space $W^{s, p}(\T^d)$
by the norm:
\begin{align*}
\| f \|_{W^{s, p}} = \big\| \F^{-1} [\jb{n}^s \ft f(n)] \big\|_{L^p}.
\end{align*}

\noi
When $p = 2$, we have $H^s(\T^d) = W^{s, 2}(\T^d)$.

\subsection{Deterministic estimates}
We first recall the following interpolation
and fractional Leibniz rule.
As for the second estimate \eqref{bilinear+}, 
see \cite[Lemma 3.4]{GKO}.

\begin{lemma}\label{LEM:prod}
The following estimates hold.

\noi
\textup{(i) (interpolation)} 
For  $0 < s_1  < s_2$, we have
\begin{equation*}
\| u \|_{H^{s_1}} \le \| u \|_{H^{s_2}}^{\frac{s_1}{s_2}} \| u \|_{L^2}^{\frac{s_2-s_1}{s_2}}.
\end{equation*}

\smallskip
	
\noi 
\textup{(ii) (fractional Leibniz rule)} Let $0\le s \le 1$. Suppose that 
$1<p_j,q_j,r < \infty$, $\frac1{p_j} + \frac1{q_j}= \frac1r$, $j = 1, 2$. 
Then, we have\footnote{We use the convention that the symbol $\lesssim$ indicates that inessential constants are suppressed in the inequality.}  
\begin{equation}  
\| \jb{\nb}^s (fg) \|_{L^r(\T^d)} 
\les \Big( \| f \|_{L^{p_1}(\T^d)} 
\| \jb{\nb}^s g \|_{L^{q_1}(\T^d)} + \| \jb{\nb}^s f \|_{L^{p_2}(\T^d)} 
\|  g \|_{L^{q_2}(\T^d)}\Big),
\label{bilinear+}
\end{equation}

\noi
where $\jb{\nb} = \sqrt{1 - \Dl}$.

\end{lemma}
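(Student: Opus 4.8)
The final statement to prove is Lemma~\ref{LEM:prod}, which collects two standard harmonic-analysis facts, so the plan is to reduce each part to classical results and assemble them cleanly. For part~(i), the interpolation inequality, I would work on the Fourier side: writing $\|u\|_{H^{s_1}}^2 = \sum_n \jb{n}^{2s_1}|\ft u(n)|^2$, the plan is to split each summand as $\jb{n}^{2s_1}|\ft u(n)|^2 = \big(\jb{n}^{2s_2}|\ft u(n)|^2\big)^{s_1/s_2}\big(|\ft u(n)|^2\big)^{(s_2-s_1)/s_2}$ and apply H\"older's inequality with exponents $p = s_2/s_1$ and $p' = s_2/(s_2-s_1)$ to the sum over $n$. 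This yields $\|u\|_{H^{s_1}}^2 \le \|u\|_{H^{s_2}}^{2s_1/s_2}\|u\|_{L^2}^{2(s_2-s_1)/s_2}$, and taking square roots gives the claim. (An alternative is to invoke complex interpolation $[L^2, H^{s_2}]_{s_1/s_2} = H^{s_1}$, but the elementary H\"older argument is self-contained and avoids machinery.)

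For part~(ii), the fractional Leibniz rule \eqref{bilinear+}, the plan is to cite the Kato--Ponce commutator estimate / Coifman--Meyer-type fractional Leibniz rule in the form adapted to the torus. The inhomogeneous version with $\jb{\nb}^s = (1-\Dl)^{s/2}$ for $0 \le s \le 1$ and H\"older-dual exponent pairs $\frac{1}{p_j}+\frac{1}{q_j} = \frac1r$, $1 < p_j, q_j, r < \infty$, is standard; one reference on $\R^d$ is Gra\-fakos--Oh or Kato--Ponce, and the periodic case follows by transference (or by working directly with Littlewood--Paley decomposition on $\T^d$). Concretely, I would decompose $fg = \sum_{j,k} \Delta_j f\, \Delta_k g$ into high-high, high-low, and low-high pieces via Littlewood--Paley projections; in the high-low regime $\jb{\nb}^s$ essentially falls on the high-frequency factor $f$, giving the term $\|\jb{\nb}^s f\|_{L^{p_2}}\|g\|_{L^{q_2}}$ after a maximal-function / H\"older estimate, symmetrically for low-high, and the high-high regime is absorbed into either term since there $\jb{\nb}^s$ acting on the product is controlled by $\jb{\nb}^s$ on one factor times the other in $L^\infty$-type norms, summing the resulting geometric-type series using $s \ge 0$.

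The main (and only real) obstacle is purely bookkeeping: making sure the exponent constraints $1 < p_j, q_j, r < \infty$ are genuinely enough for the Littlewood--Paley square-function and maximal-function bounds to apply (they are, precisely because all exponents are strictly between $1$ and $\infty$, so vector-valued Mikhlin multiplier theory and the Fefferman--Stein inequality hold), and checking that the inhomogeneous weight $\jb{\nb}^s$ rather than $|\nb|^s$ causes no trouble for $s \le 1$ (it does not, since $\jb{\nb}^s \lesssim 1 + |\nb|^s$ and the low-frequency part is harmless). Since both estimates are entirely classical, I would keep the proof short, giving the H\"older computation for~(i) in a line or two and citing the literature for~(ii) with a one-paragraph indication of the Littlewood--Paley argument.
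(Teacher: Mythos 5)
Your proposal is correct and matches the paper's treatment: the paper gives no proof of part~(i) (it is the standard H\"older-on-the-Fourier-side computation you describe) and simply cites \cite[Lemma 3.4]{GKO} for part~(ii), exactly as you propose to cite the Kato--Ponce/Coifman--Meyer literature and sketch the Littlewood--Paley argument.
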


The next lemma states almost optimal Bernstein's inequality on $\T^d$.

\begin{lemma}\label{LEM:Bernstein}
Given $N \in \N$, let $\pi_N$ be the frequency projector as in \eqref{pi}.
Then, we have 
\begin{equation*}
\|\pi_N f\|_{L^4(\T^d)}^4 \le C_B N^d (1 +o(1)) \| f\|_{L^2(\T^d)}^4
\end{equation*}

\noi
as $N \to \infty$, 
where $C_B$ is the optimal constant for Bernstein's inequality \eqref{bernstein} on $\R^d$.
\end{lemma}

\begin{proof}

Given $N \in \N$, 
let $C_{B,N}$ be the optimal constant for the following inequality on $\T^d$:
\begin{equation}\label{bernstein2N}
\|\pi_N f\|_{L^4(\T^d)}^4 \le C_{B,N} N^d \| \pi_N f\|_{L^2(\T^d)}^4,
\end{equation}
and let $f_N$ be an optimizer for \eqref{bernstein2N} with $\|f_N\|_{L^2(\T^d)} = 1$ and $\pi_N f_N = f_N$.
In particular, we have
\begin{align}
 \|f_N \|_{L^4(\T^d)}^4 = C_{B,N} N^d. 
 \label{bern3}
\end{align}

\noi
Note that such an optimizer exists, since the set 
$\{f_N: \|f_N\|_{L^2(\T^d)} = 1, \, \pi_N f_N = f_N\}$ is compact. 
Moreover, by Sobolev's inequality on the torus, we have 
\begin{align}
C_{B,N} \les 1, 
\label{bern33}
\end{align}

\noi
 uniformly in $N \in \N$. 
Then, in view of \eqref{bern3},  it suffices to show that 
\begin{align}
\limsup_{N\to \infty} N^{-d}\|f_N\|_{L^4(\T^d)}^4 \le C_B. 
\label{tsrf0}
\end{align}

Fix small $\eps> 0$.
Let $ \chi_\eps  \in C^\infty_c(\R^d; [0, 1])$ be a smooth bump function which is compactly supported 
on $[-\pi,\pi)^d\cong \T^d$
 such that 
$\chi_\eps \equiv 1$ on $[-\pi+c_0 \eps,\pi-c_0\eps]^d$
for some small $c_0 = c_0 > 0$ to be chosen later.
Recalling that $dx_{\T^d} = (2\pi)^{-d} dx$ is the normalized Lebesgue measure on $\T^d$, 
we see that $\|f_N\|_{L^4(\T^d)}^4$ is the average of $|f(x)|^4$ on $\T^d$.
Hence, by suitably translating $f_N$
(that does not affect its optimality)
 and choosing 
$c_0 = c_0 > 0$ sufficiently small (independent of small $\eps > 0$ and $N \in \N$), we have 
\begin{align}
 \|\chi_\eps^2f_N\|_{L^4(\T^d)} \ge (1-\eps) \|f_N\|_{L^4(\T^d)},
 \label{bern3a}
\end{align}

\noi
uniformly in $N \in \N$.
In the following, when we view $f_N$ as a function on $\R^d$, 
we simply view it as a periodic function:  $f(x) = f(x + 2\pi m)$, $m \in \Z^d$.

Let $\ta \in 
 C^\infty_c(\R^d; [0, 1])$
 be a smooth radial bump function on $\R^d$ such that 
$\ta (\xi) = 1$ for $|\xi|\le 1$
and 
$\ta (\xi) = 0$ for $|\xi| > 2$.
Given $M  > 0$, set $\ta_M(\xi) = \ta\big(\frac \xi M\big)$.
Now, we 
set  
\begin{align}
\chi_{\eps,M} = \Q_M(\chi_\eps) : = \F^{-1}_{\R^d}(\ta_M)* \chi_{\eps},
\label{bernx}
\end{align}

\noi 
where $\F^{-1}_{\R^d}$ is the inverse Fourier transform on $\R^d$.
Namely, 
$\chi_{\eps,M}$ is the frequency-localized  version 
of $\chi_\eps$
onto the frequencies $\{\xi \in \R^d: |\xi|\le 2 M\}$.
Then, by choosing 
$M  = M(\eps, N) > 0$ sufficiently large, we have
\begin{align}
 \|\chi_\eps -  \chi_{\eps, M}\|_{L^1(\R^d)\cap L^\infty(\R^d)} 
= \|(\Id - \Q_M)\chi_\eps\|_{L^1(\R^d)\cap L^\infty(\R^d)}
\ll \eps N^{-\frac d4}. 
\label{bern3b}
\end{align}
Since $\chi_\eps$ is a Schwartz function, 
we have  $M(\eps,N) = o(N)$
 for each fixed $\eps> 0$.

By  the definition \eqref{bernx} of $\chi_{\eps, M}$
and choosing $M = M(\eps, N) = o(N)$ possibly larger, 
we have 
\begin{align}
\begin{split}
\| \chi_{\eps,M}(\,\cdot  + 2\pi m)
\|_{L^\infty([-\pi,\pi)^d)}  
&  = \sup_{x \in [-\pi,\pi)^d}
M^{d} \int_{\R^d}\chi_\eps(x+2\pi m - y) \F_{\R^d}^{-1}(\ta)(M y) dy \\
& \les \frac {M^d }{\jb{Mm}^{2d+1} }
\ll \frac{\eps}{\jb{m}^{d}}, 
\end{split}
\label{bern3c}
\end{align}

\noi
uniformly in $m \in \Z^d\setminus\{0\}$, 
where the penultimate step follows from 
$\supp \chi_\eps \subset [- \pi, \pi)^d$
and the fact that $\ta$ is a Schwartz function.
Then, 
from the periodicity of $f_N$, 
$\supp \chi_\eps \subset [- \pi, \pi)^d$, 
\eqref{bern3b}, and \eqref{bern3c}, we obtain
\begin{align*}
 & \| (\chi_{\eps} - \chi_{\eps,M})f_N \|_{L^4(\R^d, \frac{dx}{(2\pi)^d})}\\
&   = \bigg(\frac{1}{(2\pi)^d}\sum_{m \in \Z^d} 
\int_{[-\pi,\pi)^d} (\chi_{\eps} - \chi_{\eps,M})^4(x + 2\pi m)
| f_N(x)|^4 dx\bigg)^\frac 14\\
&  \le
 \| f_N \|_{L^4(\T^d)}
 \bigg( \|\chi_\eps -  \chi_{\eps, M}\|_{L^\infty([-\pi,\pi)^d)}^4
+   \sum_{m \in \Z^d\setminus\{0\}} 
\| \chi_{\eps,M}^4(\,\cdot  + 2\pi m)
\|_{L^\infty([-\pi,\pi)^d)}^4 \bigg)^\frac 14
\\
& \ll \eps  \| f_N \|_{L^4(\T^d)}.
\end{align*}

\noi
As a consequence, we have 
\begin{align}
\begin{split}
&  \| (\chi_{\eps}^2  - \chi_{\eps,M}^2  )f_N \|_{L^4(\R^d, \frac{dx}{(2\pi)^d})\cap L^2(\R^d, \frac{dx}{(2\pi)^d})}\\
& \le  \| \chi_{\eps} - \chi_{\eps,M} \|_{L^\infty(\R^d)\cap L^4(\R^d, \frac{dx}{(2\pi)^d}      )}\\
& \quad \times \bigg(2 \| \chi_{\eps} f_N \|_{L^4(\R^d, \frac{dx}{(2\pi)^d})}+ 
\| (\chi_{\eps} -  \chi_{\eps,M})f_N \|_{L^4(\R^d, \frac{dx}{(2\pi)^d})}\bigg)\\
& \ll \eps N^{-\frac d4} \| f_N \|_{L^4(\T^d)}.
\end{split}
\label{bern3d}
\end{align}

\noi
Hence, from~\eqref{bern3a} and \eqref{bern3d}, 
we have
\begin{align}
(1-2\eps)\|f_N\|_{L^4(\T^d)} \le  \| \chi_{\eps,M}^2f_N \|_{L^4(\R^d, \frac{dx}{(2\pi)^d})}
\le (1+\eps)\|f_N\|_{L^4(\T^d)} 
\label{bern4}
\end{align}

\noi
for any small $\eps > 0$, 
uniformly in $N \in \N$.


Define the function $g_N, g_{N,M}:\R^d \to \C$ by setting
\begin{align}
 g_N(x) = \frac 1{N^{\frac d2}} \chi_\eps^2\Big(\frac xN\Big)f_N\Big(\frac x N\Big)
 \qquad \text{and} \qquad g_{N,M}(x) = \frac 1{N^{\frac d2}}
\chi_{\eps,M}^2\Big(\frac xN\Big)
  f_N\Big(\frac x N\Big).
\label{bern5}
\end{align} 

\noi
Then, from \eqref{bern4} and \eqref{bern5},  we have 
\begin{align}
N^\frac d4
\|g_{N,M}\|_{L^4(\R^d, \frac{dx}{(2\pi)^d})} 
= \| \chi_{\eps,M}^2f_N \|_{L^4(\R^d, \frac{dx}{(2\pi)^d})} 
\ge (1-2\eps)\|f_N\|_{L^4(\T^d)}.
\label{trsf1}
\end{align}

\noi
By H\"older's inequality and \eqref{bern3d} with \eqref{bern3} and \eqref{bern33}, we have 
\begin{align*}
\|g_N - g_{N,M}\|_{L^2(\R^d, \frac{dx}{(2\pi)^d})}
 &= \| (\chi_\eps^2 - \chi_{\eps,M}^2)f_N \|_{L^2(\R^d, \frac{dx}{(2\pi)^d})} \\
& \ll \eps.
\end{align*}

\noi
Noting that $\|g_N\|_{L^2(\R^d, \frac{dx}{(2\pi)^d})} = \|\chi_\eps^2f_N \|_{L^2(\T^d)} \le 1$, we then obtain 
\begin{align}
\|g_{N,M}\|_{L^2(\R^d, \frac{dx}{(2\pi)^d})} \le (1+\eps). 
\label{trsf2}
\end{align}

Finally,  recalling that the Fourier support of $f_N = \pi_N f $ (as a function on $\T^d$) is contained in $\{n \in \Z^d |n| \le N\}$ and the Fourier support of $\chi_{\eps,M}$ (as a function on $\R^d$ is contained in 
$\{\xi \in \R^d: |\xi| \le 2M\}$
and that  $M(\eps,N) = o(N)$, it follows from \eqref{bern5} that 
\begin{align}
 \supp(\ft g_{N,M}) \subset \bigg\{ \xi \in \R^d : |\xi| \le \frac{N+2M}{N}\bigg\} 
 \subset  \big\{ \xi \in \R^d :  |\xi| \le 1 + o(1) \big\}.
 \label{bern6}
\end{align}
Therefore, from  \eqref{trsf1} and 
(the scaled version of) \eqref{bernstein} with \eqref{bern6} followed by \eqref{trsf2}, we 
conclude that 
\begin{align*}
N^{-d}\|f_N\|_{L^4(\T^d)}^4 
& \le (1 -2 \eps)^{-4}  \|g_{N,M}\|_{L^4(\R^d, \frac{dx}{(2\pi)^d})}^4 \\
& \le (1-2\eps)^{-4}4 C_B  (1 +o(1)) \|g_{N,M}\|_{L^2(\R^d, \frac{dx}{(2\pi)^d})}^4 \\
& \le \bigg(\frac{1+\eps}{1-2\eps}\bigg)^4 C_B  (1 +o(1)).
\end{align*}
Since $\eps>0 $ is arbitrary, by taking the $\limsup$ as $N\to \infty$, we obtain \eqref{tsrf0}.
\end{proof}

\subsection{Tools from stochastic analysis}

Next, we recall the Wiener chaos estimate (Lemma~\ref{LEM:hyp}).
For this purpose, we first recall 
basic definitions
from stochastic analysis;
see \cite{Bog, Shige}. 
Let $(H, B, \nu)$ be an abstract Wiener space.
Namely, $\nu$ is a Gaussian measure on a separable Banach space $B$
with $H \subset B$ as its Cameron-Martin space.
Given  a complete orthonormal system $\{e_j \}_{ j \in \N} \subset B^*$ of $H^* = H$, 
we  define a polynomial chaos of order
$k$ to be an element of the form $\prod_{j = 1}^\infty H_{k_j}(\jb{x, e_j})$, 
where $x \in B$, $k_j \ne 0$ for only finitely many $j$'s, $k= \sum_{j = 1}^\infty k_j$, 
$H_{k_j}$ is the Hermite polynomial of degree $k_j$ as in \eqref{Herm0}, 
and $\jb{\cdot, \cdot} = \vphantom{|}_B \jb{\cdot, \cdot}_{B^*}$ denotes the $B$-$B^*$ duality pairing.
We then 
denote the closure  of 
polynomial chaoses of order $k$ 
under $L^2(B, \nu)$ by $\mathcal{H}_k$.
The elements in $\mathcal{H}_k$ 
are called homogeneous Wiener chaoses of order $k$.
We also set
\begin{align}
\mathcal{H}_{\leq k} = \bigoplus_{j = 0}^k \mathcal{H}_j
\notag
\end{align}

\noi
for $k \in \N$.

Let $L = \Dl -x \cdot \nabla$ be 
the Ornstein-Uhlenbeck operator.\footnote{For simplicity, 
we write the definition of the Ornstein-Uhlenbeck operator $L$
when $B = \R^d$.}
Then, 
it is known that 
any element in $\mathcal H_k$ 
is an eigenfunction of $L$ with eigenvalue $-k$.
Then, as a consequence
of the  hypercontractivity of the Ornstein-Uhlenbeck
semigroup $U(t) = e^{tL}$ due to Nelson \cite{Nelson}, 
we have the following Wiener chaos estimate
\cite[Theorem~I.22]{Simon}.

\begin{lemma}\label{LEM:hyp}
Let $k \in \N$.
Then, we have
\begin{equation*}
\|X \|_{L^p(\O)} \leq (p-1)^\frac{k}{2} \|X\|_{L^2(\O)}
\end{equation*}
	
\noi
for any $p \geq 2$
and any $X \in \mathcal{H}_{\leq k}$.
	
\end{lemma}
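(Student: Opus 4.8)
The plan is to deduce the estimate from Nelson's hypercontractivity of the Ornstein--Uhlenbeck semigroup $U(t) = e^{tL}$ (the substantive input, which I take as given), together with the spectral picture of $L$ on the Wiener chaos decomposition; the rest is a three-line computation. Throughout I write $\mu$ for the underlying Gaussian measure on $B$, which we identify with the probability space $(\O, \F, \PP)$. The version of Nelson's theorem I would use is: for $1 < q \le p < \infty$ and $t \ge 0$ with $e^{2t} \ge \frac{p-1}{q-1}$, the operator $U(t)$ is a contraction from $L^q(\mu)$ to $L^p(\mu)$, i.e. $\|U(t) f\|_{L^p(\mu)} \le \|f\|_{L^q(\mu)}$. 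We only need the case $q = 2$. On the other side, the homogeneous chaoses $\{\mathcal{H}_j\}_{j \ge 0}$ are mutually orthogonal in $L^2(\mu)$ (a reflection of the orthogonality of Hermite polynomials), and, as recalled in the excerpt, every $X \in \mathcal{H}_j$ satisfies $LX = -jX$, hence $U(t) X = e^{-jt} X$; thus $U(t)$ acts diagonally on $\mathcal{H}_{\leq k} = \bigoplus_{j = 0}^k \mathcal{H}_j$.

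With these two ingredients in hand, the argument is as follows. Fix $p \ge 2$ and $X \in \mathcal{H}_{\leq k}$, and decompose $X = \sum_{j = 0}^k X_j$ with $X_j \in \mathcal{H}_j$. Choose $t = \tfrac12 \log(p-1) \ge 0$, so that $e^{2t} = p - 1 = \frac{p-1}{2-1}$, which is exactly the borderline exponent for hypercontractivity from $L^2$ to $L^p$. Define
\[
Y := \sum_{j = 0}^k e^{jt} X_j,
\]
which is a well-defined element of $\mathcal{H}_{\leq k} \subset L^2(\mu)$ because the sum is finite. By the diagonal action of $U(t)$ noted above, $U(t) Y = \sum_{j=0}^k e^{jt} e^{-jt} X_j = X$. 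Using the $L^2$-orthogonality of the $\mathcal{H}_j$'s,
\[
\|Y\|_{L^2(\mu)}^2 = \sum_{j=0}^k e^{2jt} \|X_j\|_{L^2(\mu)}^2 \le e^{2kt} \sum_{j=0}^k \|X_j\|_{L^2(\mu)}^2 = (p-1)^k \, \|X\|_{L^2(\mu)}^2.
\]
Applying Nelson's hypercontractivity with $q = 2$ to $Y$ then gives
\[
\|X\|_{L^p(\mu)} = \|U(t) Y\|_{L^p(\mu)} \le \|Y\|_{L^2(\mu)} \le (p-1)^{k/2} \, \|X\|_{L^2(\mu)},
\]
which is the asserted bound (read on $(\O, \F, \PP)$). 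The endpoint cases $p = 2$ (where $t = 0$ and $U(0) = \mathrm{id}$) and $k = 0$ (where $\mathcal{H}_0$ consists of constants) are trivially consistent.

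The only genuine obstacle is Nelson's hypercontractivity estimate itself — its proof, via the elementary two-point inequality plus tensorization and a central-limit passage to the Gaussian, or alternatively via the Gaussian logarithmic Sobolev inequality — which I would not reproduce here, simply citing \cite{Nelson} (and \cite{Simon}); granting it, the lemma follows in the few lines above, and in particular no combinatorics or explicit chaos expansions are needed.
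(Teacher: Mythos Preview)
Your proof is correct and follows exactly the approach the paper indicates: the paper does not write out a proof but simply states the lemma as a consequence of Nelson's hypercontractivity, citing \cite[Theorem~I.22]{Simon}. Your derivation --- choosing $t = \tfrac12\log(p-1)$, inverting $U(t)$ on the finite chaos decomposition, and invoking orthogonality --- is the standard argument behind that citation.
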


\begin{lemma}\label{LEM:boundarymeasure}
Let $\nu_N$ be the law of $I_N \deff \int_{\T^d}:\!  u_N^2(x)  \!: dx$, 
where $u$ is as in \eqref{IV2}
and $u_N = \pi_N u$. Then,  for every $N \in \N$, 
$\nu_N$ is absolutely continuous with respect to the Lebesgue measure
$\ld$ on $\R$. 
Moreover, we have
\begin{align}
\bigg\|\frac{d\nu_N}{d\ld} \bigg\|_{L^\infty(\R)} \les 1, 
\label{PP1}
\end{align}
uniformly in $N \in \N$. 
As a consequence, we have
\begin{align}
\mu \bigg(\int_{\T^d}:\!  u^2(x)    \!: dx = K\bigg) = 0
\label{PP2}
\end{align}

\noi
for any $K \in \R$, 
where $\mu$ is the log-correlated Gaussian free field 
defined in  \eqref{gauss0}.

\end{lemma}

\begin{proof}
By the definition \eqref{Wick1} of $:\!  u_N^2 \!: \,$
with \eqref{IV2}, we have 
\begin{align*}
 \int_{\T^d} :\!  u_N^2(x) \!:  dx 
 & = \sum_{0 \le |n|\le N} \frac{|g_n|^2 - 1}{\jb{n}^d} \\
 & =  \sum_{0\le|n|\le 1} \frac{|g_n|^2-1}{\jb{n}^d}+ 
 \sum_{2\le|n|\le N} \frac{|g_n|^2-1}{\jb{n}^d} \\& =: A_1 + A_{2, N}
\end{align*}

\noi
with the understanding that $A_{2, N} = 0$ when $N = 1$.
Because of independence of the Gaussians $\{g_n\}_{|n|>2}$ from $g_0$ and $g_1$, 
the random variables $A_1$ and $A_{2, N}$ are independent. 
Note that
 the law $\mu_1$ of $A_1$ 
(and $\mu_{2, N}$ of $A_{2, N}$ when $N \ge 2$, respectively) is absolutely continuous
with respect to the Lebesgue measure $\ld$ on $\R$.
Thus, we have  $d \mu_1 = \s_1 d\ld$ 
for some $\s_1 \in L^1(\R)$
(and  $d \mu_{2, N} = \s_{2, N} d\ld$ 
for some $ \s_{2, N} \in L^1(\R)$
when $N \ge 2$, respectively).

We have
\begin{align*}
 \sum_{0\le|n|\le 1} \frac{|g_n|^2-1}{\jb{n}^d}
 = (g_0^2- 1) + 2^{1-\frac d2}(|g_1|^2 -1).
\end{align*}

\noi
Letting 
 $\s_{10}$ (and $\s_{11}$) be the density
 for $g_0^2- 1$ (and 
$ 2^{1-\frac d2}(|g_1|^2 -1)$, respectively), 
we have \[\s_1 = \s_{10} * \s_{11}.\]

\noi
Note that  $g_0^2$ is a chi-square distribution of one degree of freedom
and thus the density $\s_{10}$ for $g_0^2- 1$ is unbounded.\footnote{In particular, 
\eqref{PP1} is false when $N = 0$.}
On the other hand, 
$2|g_1|^2 = 2(\Re g_1)^2 + 2(\Im g_1)^2$
is a chi-square distribution of two degrees of freedom
and thus the density $\s_{11}$ for $ 2^{1-\frac d2}(|g_1|^2 -1)$ is bounded.
Hence, by Young's inequality, we have
\begin{align*}
\|\s_1\|_{L^\infty(\R)}
= \|\s_{10}* \s_{11}\|_{L^\infty(\R)}
\le \|\s_{10}\|_{L^1(\R)}
 \| \s_{11}\|_{L^\infty(\R)} <\infty, 
\end{align*}

\noi
which proves \eqref{PP1}, when $N = 1$.
Next, we consider the case $N \ge 2$.
Denoting by 
 $\s_{2n}$  the density
 for 
$ 2\jb{n}^{-d}(|g_n|^2 -1)$, 
by Young's inequality, we have 
\begin{align}
\|\s_{2, N}\|_{L^1(\R)} 
= 
\| \s_{22} * \s_{23} * \cdots \s_{2N}\|_{L^1(\R)} 
\le \prod_{n = 2}^N \|\s_{2n}\|_{L^1(\R)} 
= 1, 
\label{XY1}
\end{align}

\noi
where the last equality holds
since $\s_{2n}$ is a density of a probability measure.
Hence, 
by Young's inequality with \eqref{XY1}, we have 
\begin{align*}
\bigg\|\frac{d\nu_N}{d\lambda}\bigg\|_{L^\infty(\R)} 
&= \bigg\|\frac{d\Law( A_1 + A_{2, N})}{d\lambda}\bigg\|_{L^\infty(\R)} \\
&= \| \s_1 \ast \s_{2, N}\|_{L^\infty(\R)} \\
&\le \| \s_1 \|_{L^\infty(\R)} \|\s_{2, N}\|_{L^1(\R)} \\
&= \| \s_1 \|_{L^\infty(\R)} \\
&\les 1,
\end{align*}

\noi
uniformly in $N \ge 2$.
This proves \eqref{PP1}.

Let $I_\infty = \int_{\T^d}:\!  u^2(x)  \!: dx$.
Since $I_N$ converges to $I_\infty$ in law (see, for example, \cite[Proposition~1.1]{OTh}), 
it follows from the  Portmanteau theorem and \eqref{PP1} that 
\begin{align*}
\PP(I_\infty = K) 
& \le \PP\big(I_\infty \in  (K-\eps, K+\eps)\big) 
\le \liminf_{N\to \infty}
\PP\big(I_N \in  (K-\eps, K+\eps)\big) \\
& =  \liminf_{N\to \infty}
\nu_N\big(  (K-\eps, K+\eps)\big)\\
& \le \sup_{N \in \N} \bigg\|\frac{d\nu_N}{d\lambda}\bigg\|_{L^\infty(\R)} 
\cdot \ld \big(  (K-\eps, K+\eps)\big)\\
& \les \eps
\end{align*}

\noi
for any $\eps > 0$.
Since the choice of $\eps > 0$ was arbitrary, 
we then conclude \eqref{PP2}.
\end{proof}

\section{Non-normalizability of 
the focusing Gibbs measure with the quartic interaction}
\label{SEC:foc2}

In this section, we present the proof of 
the non-normalizability of the log-correlated  Gibbs
measure 
with the focusing quartic interaction
(Theorem \ref{THM:1}).

\subsection{Variational formulation}
\label{SUBSEC:var}

In order to  prove \eqref{Gibbs4} and \eqref{nonnormalisability}, 
we use a variational formula for the partition function
as in \cite{TW, OOT}.
Let us first introduce some notations.
Fix  
 a probability space $(\O, \F, \mathbb P)$.
Let $W(t)$ be a cylindrical Brownian motion in $L^2(\T^d)$.
Namely, we have
\begin{align}
W(t) = \sum_{n \in \Z^d} B_n(t) e_n,
\label{P1}
\end{align}

\noi
where  
$\{B_n\}_{n \in \Z^d}$ is a sequence of mutually independent complex-valued\footnote{By convention, we normalize $B_n$ such that $\text{Var}(B_n(t)) = t$. In particular, $B_0$ is  a standard real-valued Brownian motion.} Brownian motions such that 
$\cj{B_n}= B_{-n}$, $n \in \Z^d$. 
Then, define a centered Gaussian process $Y(t)$
by 
\begin{align}
Y(t)
=  \jb{\nabla}^{-\frac d2}W(t).
\label{P2}
\end{align}

\noi
Note that 
we have $\Law(Y(1)) = \mu$, 
where $\mu$ is the log-correlated Gaussian measure in \eqref{gauss0}.
By setting  $Y_N = \pi_NY $, 
we have   $\Law(Y_N(1)) = (\pi_N)_*\mu$, 
i.e.~the pushforward of $\mu$ under $\pi_N$.
In particular, 
we have  $\E [Y_N^2(1)] = \s_N$,
where $\s_N$ is as in~\eqref{sigma1}.
Here, the expectation $\E$ is with respect to 
the underlying probability measure $\PP$.

Next, let $\Ha$ denote the space of drifts, 
which are progressively measurable\footnote{Namely, 
the map $(t, \o) \in [0, 1] \times \O\mapsto  \ta(t, \o) \in L^2(\T^d)$
is $\mathcal B_{[0, t]}\otimes \F_t$-measurable, 
where $\mathcal B_{[0, t]}$ denotes the Borel sets in $[0, t]$
and $\{\F_t\}_{0 \le t \le 1}$ denotes the  filtration induced by the process  $Y$.
} 
processes 
belonging to 
$L^2([0,1]; L^2(\T^d))$, $\PP$-almost surely. 
We now state the  Bou\'e-Dupuis variational formula \cite{BD, Ust};
in particular, see Theorem 7 in \cite{Ust}.

\begin{lemma}\label{LEM:var3}
Let $Y$ be as in \eqref{P2}.
Fix $N \in \N$.
Suppose that  $F:C^\infty(\T^d) \to \R$
is measurable such that $\E\big[|F(\pi_NY(1))|^p\big] < \infty$
and $\E\big[|e^{-F(\pi_NY(1))}|^q \big] < \infty$ for some $1 < p, q < \infty$ with $\frac 1p + \frac 1q = 1$.
Then, we have
\begin{align}
- \log \E\Big[e^{-F(\pi_N Y(1))}\Big]
= \inf_{\dr \in \mathbb H_a}
\E\bigg[ F(\pi_N Y(1) + \pi_N I(\dr)(1)) + \frac{1}{2} \int_0^1 \| \dr(t) \|_{L^2_x}^2 dt \bigg], 
\label{P3}
\end{align}

\noi
where  $I(\dr)$ is  defined by 
\begin{align}
 I(\dr)(t) = \int_0^t \jb{\nabla}^{-\frac d2} \dr(t') dt'
\label{P3a}
\end{align}

\noi
and the expectation $\E = \E_\PP$
is an 
expectation with respect to the underlying probability measure~$\PP$.

\end{lemma}

In the following,
we construct a drift $\dr$ depending on $Y$
and 
 the  Bou\'e-Dupuis variational formula 
(Lemma \ref{LEM:var3}) is suitable for this purpose
since 
an  expectation in  \eqref{P3} is taken
with respect to  the underlying probability measure $\PP$.
Compare this with the variational formula
in~\cite{GOTW}, 
where an expectation is taken 
with respect to  a shifted measure.

Before proceeding to the proof of Theorem \ref{THM:1}, 
we state a lemma on the  pathwise regularity bounds  of 
$Y(1)$ and $I(\dr)(1)$.

\begin{lemma}  \label{LEM:Dr}
	
\textup{(i)} 
Let $\eps > 0$. Then, given any finite $p \ge 1$, 
we have 
\begin{align}
\E 
\Big[  \|Y_N(1)\|_{W^{-\eps,\infty}}^p
+ \|:\!Y_N^2(1)\!:\|_{W^{-\eps,\infty}}^p
+ 
\big\| :\! Y_N^3(1) \!:  \big\|_{W^{-\eps,\infty}}^p
\Big]
\leq C_{\eps, p} <\infty,
\label{P4}
\end{align}

\noi
uniformly in $N \in \N$.
	
\smallskip
	
\noi
\textup{(ii)} For any $\dr \in \Ha$, we have
\begin{align}
\| \I(\dr)(1) \|_{H^{\frac d2}}^2 \leq \int_0^1 \| \dr(t) \|_{L^2}^2dt.
\label{CM}
\end{align}
\end{lemma}

Before proceeding to the proof of Lemma \ref{LEM:Dr}, 
recall the following orthogonality result
\cite[Lemma 1.1.1]{Nua};
let $f$ and $g$ be jointly Gaussian random variables with mean zero 
and variances $\s_f$
and $\s_g$.
Then, we have 
\begin{align}
\E\big[ H_k(f; \s_f) H_\l(g; \s_g)\big] = \dl_{k\l} k! \big\{\E[ f g] \big\}^k, 
\label{Wickx}
\end{align}

\noi
where
 $H_k (x,\s)$ denotes the Hermite polynomial of degree $k$ with variance parameter $\s$.

\begin{proof}

Part (i) 
is a direct consequence of pathwise regularities
of the log-correlated Gaussian process $Y$ 
(and its Wick powers) whose law at time $t = 1$ is given by $\mu$ in \eqref{gauss0}.
See, for example, 
\cite[Proposition 2.3]{OTh2}
and 
\cite[Proposition 2.1]{GKO}
for related results when $d = 2$.
For readers' convenience, 
we present  details.
Given $\eps > 0$
and finite $p \ge 1$, let $r \ge p$
such that $\eps r > 2d$.
Then, 
from the Sobolev embedding theorem and Minkowski's integral inequality, 
we have 
\begin{align}
\begin{split}
 \Big\|\|:\!Y_N^k(1)\!:\|_{W^{-\eps, \infty}}\Big\|_{L^p(\O)}
& \les   \Big\|\|:\!Y_N^k(1)\!:\|_{W^{-\frac \eps2, r}}\Big\|_{L^p(\O)}\\
&  \leq\Big\|\|\jb{\nb}^{-\frac\eps2}:\!Y_N^k(1, x)\!:\|_{L^p(\O)}\Big\|_{L^r_x}.
\end{split}
\label{XH1}
\end{align}

\noi
On the other hand, 
from \eqref{Wick1} and \eqref{Wickx} with \eqref{P1} and \eqref{P2}, 
we have 
\begin{align*}
\E\big[:\!Y_N^k(1, x)\!:\, 
:\!Y_N^k(1, y)\!:\big]
& = k! \big\{\E[Y_N(1, x) Y_N(1, y)]\big\}^{k}\\
& = k!\sum_{\substack{n_1, \dots, n_k \in \Z^d\\ |n_j|\le N}}
\prod_{j = 1}^k \frac{1}{\jb{n_j}^d} e_{n_1+ \cdots + n_k} (x - y).
\end{align*}

\noi
By applying the Bessel potentials $\jb{\nb}_x^{-\frac\eps2}$
and $\jb{\nb}_y^{-\frac\eps2}$ of order $-\frac \eps2$
and then setting $x = y$, we have 
\begin{align}
\E\big[|\jb{\nb}^{-\frac\eps2} :\!Y_N^k(1, x)\!:|^2\big]
& = k!\sum_{\substack{n_1, \dots, n_k \in \Z^d\\ |n_j|\le N}}
\prod_{j = 1}^k \frac{1}{\jb{n_j}^d\jb{n_1+ \cdots + n_k}^\eps}
\les 1, 
\label{XH2}
\end{align}

\noi
uniformly in $N \in \N$.
Then, \eqref{P4}
follows from \eqref{XH1},  Lemma~\ref{LEM:hyp}, and \eqref{XH2}.

As for Part (ii), 
the estimate \eqref{CM}
follows  from \eqref{P3a}, Minkowski's inequality, and Cauchy-Schwarz's inequality.
See  the proof of Lemma 4.7 in \cite{GOTW} .
\end{proof}

\subsection{Proof of Theorem \ref{THM:1}}
\label{SUBSEC:3.2}

In this subsection, we present the proof of  Theorem \ref{THM:1}.
Let us first discuss 
the divergence
\eqref{nonnormalisability}
for  any $K>0$. 
Given $K, L > 0$ and $N \in \N$, define $Z_{K,L,N}$ and $Z_{K,L}$ by 
\begin{equation*}
Z_{K,L,N} = \E_\mu\Big[\exp\big(\min{( R_N(u),L)} \big) 
\cdot \ind_{\{ |\int_{\T^d} \, : \, u_N^2 :\, dx | \le K\}} \Big]
\end{equation*}
and 
\begin{equation*}
Z_{K,L} = \E_\mu\Big[\exp\big(\min{( R(u),L)} \big) 
\cdot \ind_{\{ |\int_{\T^d} \, : \, u^2 :\, dx | \le K\}} \Big].
\end{equation*}

\noi
Then, by the monotone convergence theorem, we have 
$$ Z_K = \lim_{L\to \infty} Z_{K,L}. $$

\noi
Moreover, 
by the dominated convergence theorem together with the almost sure convergence\footnote{See, for example, \cite[Proposition~1.1]{OTh} together with the Borel-Cantelli lemma.} 
 of $R_N(u)$ (and 
$\int_{\T^d}  : \! u_N^2 \!: dx$) to 
$R(u)$ (and $\int_{\T^d}  : \! u^2 \!: dx$, respectively)
and 
Lemma \ref{LEM:boundarymeasure}
(which guarantees
almost sure convergence of 
$\ind_{\{ |\int_{\T^d} \, : \, u_N^2 :\, dx | \le K\}}$
to $\ind_{\{ |\int_{\T^d} \, : \, u^2 :\, dx | \le K\}}$), we obtain 
$$ Z_{K,L} = \lim_{N \to \infty} Z_{K,L,N}. $$
Therefore, \eqref{nonnormalisability} follows once we prove
the following divergence:
\begin{align}
\lim_{L \to \infty} \liminf_{N \to \infty} 
Z_{K, L, N} =  \infty,
\label{pax}
\end{align}

\noi
where $R_N(u)$ is as  in \eqref{Wick0} with $\ld > 0$ and $k = 4$.

Noting that 
\begin{align}
Z_{K, L, N} \ge \E_\mu\Big[\exp\Big(\min{( R_N(u),L)} 
\cdot \ind_{\{ |\int_{\T^d} \, : u_N^2 :\, dx | \le K\}}\Big)   \Big]
- 1,
\label{pax1}
\end{align}

\noi
the  divergence \eqref{pax} (and thus \eqref{Gibbs4}) follows once we prove
\begin{align}
\lim_{L \to \infty} \liminf_{N \to \infty}  \E_\mu\Big[\exp\Big(\min{(  R_N(u),L)} 
\cdot \ind_{\{ |\int_{\T^d} \, : u_N^2 :\, dx | \le K\}}\Big)   \Big] =  \infty.
\label{pa0}
\end{align}

By  the  Bou\'e-Dupuis variational formula 
(Lemma \ref{LEM:var3}), we have
\begin{align}
\begin{split}
- \log & \, {\E_\mu \Big[\exp\Big(\min{(  R_N(u),L)} \cdot \ind_{\{ |\int_{\T^d} \, : u_N^2 :\, dx | \le K\}}\Big)   \Big]} \\
&= \inf_{\dr \in \mathbb H_a} \E\bigg[ -\min\big(  R_N(Y(1) + I(\dr)(1)),L\big)\\
&\hphantom{XXXXX} \times  \ind_{\{ |\int_{\T^d} : (\pi_N Y(1))^2: 
+ 2 (\pi _N Y(1)) (\pi_N I(\dr)(1)) + (\pi_N I(\dr)(1))^2 dx | \le K\}} \\
&\hphantom{XXXXX}
+ \frac 12 \int_0^1 \| \dr(t)\|_{L^2_x} ^2 dt \bigg],
\label{DPf}
\end{split}
\end{align}

\noi
where $Y(1)$ is as in \eqref{P2}.
Here,  $\E_\mu$ and $\E$ denote expectations
with respect to the Gaussian field~$\mu$ in \eqref{gauss0}
and the underlying probability measure $\PP$, respectively.
In the following, we show that the right-hand side 
of \eqref{DPf} tends to $-\infty$ as $N, L \to \infty$.
The main idea is to construct a drift $\dr$
such that 
$I(\dr)$ looks like ``$- Y(1) + $ 
a perturbation'', where the perturbation term is bounded in $L^2(\T^d)$
but has a large $L^4$-norm.\footnote{While we do not make use of solitons 
in an explicit manner in this paper, 
one should think of this perturbation as something like a soliton
or a finite blowup solution (at a fixed time) with a highly concentrated
profile whose $L^4$-norm blows up while its  $L^2$-norm remains bounded.
See Lemma \ref{LEM:leo1}.}

\medskip

\noi
$\bullet$ {\bf Part 1:}
We first present several preliminary results.
The proofs of Lemmas \ref{LEM:leo1} and \ref{LEM:leo2} are presented in Subsection \ref{SUBSEC:LEM}.
\\
\indent
We first construct a perturbation term in the next lemma.
Fix a large parameter $M \gg 1$.
Let $f: \R^d \to \R$ be a real-valued Schwartz function
with $\|f\|_{L^2(\R^d, \frac{dx}{(2\pi)^d})} = 1$
such that 
its Fourier transform $\ft f$ is 
supported  on $\{\xi \in \R^d:  |\xi| \le 1 \}$ with $\ft f(0) = 0$.
Define a function $f_M$  on $\T^d$ by 
\begin{align}
f_M =  M^{-\frac d2} \sum_{\substack{n \in \Z^d \\  |n| \le M}} \ft f\Big( \frac nM \Big) e_n, 
\label{fMdef} 
\end{align}

\noi
where $\ft f = \F_{\R^d}(f)$ denotes the Fourier transform on $\R^d$ defined in \eqref{FF1}.
Then, a direct computation yields  the following lemma.

\begin{lemma}\label{LEM:leo1}
Let   $\al > 0$. Then,  we have
\begin{align}
\int_{\T^d} f_M^2 dx &= 1 + O( M^{-\al}), \label{fM0} \\
\int_{\T^d}  f_M^4  dx &= M^d \|f\|_{L^4(\R^d, \frac{dx}{(2\pi)^d})}^4
+ O(M^{-\al}) \sim M^d,  \label{fM1}\\
\begin{split}
\int_{\T^d} (\jb{\nabla}^{-\al} f_M)^2 dx 
&\le 
 C(f)  M^{-d-2 + \max(d+ 2 -2\al, 0)}\\
&  = \begin{cases}
M^{-2\al}, & \text{for }\al \le \frac d2 + 1,\\
M^{-d-2}, & \text{for }\al > \frac d2 + 1.
 \end{cases}
 \end{split}
 \label{fm2} 
\end{align}

\noi
for any $M \gg 1$ and some constant $C(f) > 0$.

\end{lemma}

See Lemma 5.13 in \cite{OOT} for an analogous result on the construction
of a perturbation term.
While Lemma \ref{LEM:leo1} 
follows from a similar consideration, we present some details
of the proof 
in Subsection \ref{SUBSEC:LEM}.

\medskip

In the next lemma, we construct
an approximation $\z_M$ to $Y$ in \eqref{P2}
by solving stochastic differential equations.
Note that, in \cite{OOT}, such an approximation
of $Y(1)$
was constructed essentially by (a suitable frequency truncation of)
$Y(\frac 12)$, 
which was sufficient to prove a divergence analogous
to \eqref{pa0} for {\it large} $K \gg1 $.
In order to prove the divergence \eqref{pa0} for {\it any} $K > 0$, 
we need to establish a more refined approximation argument.
For simplicity, we denote  $Y(1)$ and $\pi_N Y(1)$ by $Y$ and $Y_N$,
respectively,  in the following.


\begin{lemma} \label{LEM:leo2}
Given $ M\gg 1$,  define $\z_M$ by its Fourier coefficients
as follows.
For $|n| \leq M$, $\ft \z_M(n, t)$ is a solution of the following  differential equation\textup{:}
\begin{align}
\begin{cases}
d \ft \z_{M}(n, t) = \jb{n}^{-\frac d2} M^\frac d2 (\ft Y(n, t)- \ft \z_{M}(n, t)) dt \\
\ft \z_{M}|_{t = 0} =0, 
\end{cases}
\label{ZZZ}
\end{align}

\noi
 and  we set $\ft \z_{M}(n, t)  \equiv 0$ for $|n| > M$.
Then, $\z_M(t)$ is a centered Gaussian process in $L^2(\T^d)$, which is frequency localized on $\{|n| \le M \}$, 
satisfying 
\begin{align}
&\E \big[ \z_M^2(x) \big] = \s_M(1 + o(1)) \sim \log M,\label{NRZ0}\\
&\E\bigg[  2 \int_{\T^d} Y_N \z_M dx - \int_{\T^d} \z_M^2 dx   \bigg] = \s_M(1 + o(1)) \sim \log M, \label{NRZ1}\\
&\E \bigg[  \Big|   \int_{\T^d} :\! ( Y_N-\z_M)^2 \!: dx  \Big|^2      \bigg] \les M^{-d}\log M,    \label{NRZ3}\\
&\E\bigg[\Big( \int_{\T^d} Y_N  f_M dx \Big)^2\bigg] 
+ \E\bigg[\Big( \int_{\T^d} \z_M  f_M dx \Big)^2\bigg] \les M^{-d},   \label{NRZ5}\\
&\E\bigg[\int_0^1 \Big\| \frac d {ds} \z_M(s) \Big\|^2_{H^\frac d 2}ds\bigg] \les M^d \label{NRZ6}
\end{align}
	
\noi
for any $N \ge M \gg 1$, 
where  $\z_M =\z_M|_{t = 1}$
and
\begin{align}
 :\! ( Y_N-\z_M)^2 \!: \, = 
 ( Y_N-\z_M)^2 - \E\big[ ( Y_N-\z_M)^2 \big].
\label{ZZZ2}
\end{align}

\noi
Here, \eqref{NRZ0} is independent of $x \in \T^d$. 
	
\end{lemma}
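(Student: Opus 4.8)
The plan is to verify each of the five claimed estimates \eqref{NRZ0}--\eqref{NRZ6} by reducing everything to explicit Gaussian computations at the level of Fourier coefficients, exploiting the fact that $Z_M$ solves the linear SDE \eqref{ZZZ} with deterministic (scalar) coefficients and hence is an explicit linear functional of the Brownian path. First I would solve \eqref{ZZZ} explicitly: for each $|n| \le M$, writing $\ld_n = \jb{n}^{-d/2} M^{d/2}$, Duhamel's formula gives $\ft Z_M(n,t) = \ld_n \int_0^t e^{-\ld_n(t-t')} \ft Y(n,t')\, dt'$. Since $\ft Y(n,t') = \jb{n}^{-d/2} \ft W(n,t')$ with $\ft W(n,\cdot)$ a (complex) Brownian motion, an integration by parts in $t'$ lets me rewrite $\ft Z_M(n,1)$ as $\jb{n}^{-d/2}\bigl(\ft W(n,1) - \int_0^1 e^{-\ld_n(1-t')}\,d\ft W(n,t')\bigl)$, which simultaneously exhibits $\ft Y_N(n,1) - \ft Z_M(n,1) = \jb{n}^{-d/2}\int_0^1 e^{-\ld_n(1-t')}\,d\ft W(n,t')$ for $|n|\le M$ (and $=\ft Y(n,1)$ for $M<|n|\le N$). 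By the It\^o isometry, the relevant second moments are then all controlled by the scalar quantities $a_n := \E|\ft Y_N(n,1)|^2 = \jb{n}^{-d}$, $b_n := \E\bigl[\ft Y_N(n,1)\overline{\ft Z_M(n,1)}\bigl] = \jb{n}^{-d}(1 - \frac{1 - e^{-\ld_n}}{\ld_n})$ and $c_n := \E|\ft Z_M(n,1)|^2$, from which $\E|\ft Y_N(n,1) - \ft Z_M(n,1)|^2 = \jb{n}^{-d}\cdot\frac{1 - e^{-2\ld_n}}{2\ld_n}$ for $|n|\le M$; note $\ld_n \gtrsim 1$ for all $|n|\le M$ with $\ld_n\to\infty$ as $|n|/M\to 0$, so this last quantity is $\les \jb{n}^{-d}\ld_n^{-1} = \jb{n}^{-d/2}M^{-d/2}$.

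With these formulas in hand the individual estimates become routine sums over $n$. For \eqref{NRZ0}, $\E[|Z_M(x)|^2] = \sum_{|n|\le M} c_n$, and since $c_n = \jb{n}^{-d}(1 + O(\ld_n^{-1}))$ the sum is $\sim \sum_{|n|\le M}\jb{n}^{-d} \sim \log M$ by \eqref{sigma1}, the error term being summable. For \eqref{NRZ1}, $\E[2\int Y_N Z_M - \int Z_M^2] = \sum_{|n|\le M}(2b_n - c_n)$; one checks $2b_n - c_n = \jb{n}^{-d}(1 - e^{-\ld_n})^2/\ld_n^2 \cdot(\ld_n^2/(\cdots))$ — more simply, $2b_n - c_n \ge 0$ and $2b_n - c_n = \jb{n}^{-d}(1 + O(\ld_n^{-1}))$ again, giving $\sim \log M$. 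For \eqref{NRZ3}, expanding the Wick square and using that for a mean-zero Gaussian $\E|\int :g^2: dx|^2 = 2\sum_{n}(\E|\ft g(n)|^2)^2$ when $g$ is real (after accounting for the reality constraint $\ft g(-n) = \overline{\ft g(n)}$), I get $\E|\int :(Y_N - Z_M)^2: dx|^2 \les \sum_{|n|\le M}(\jb{n}^{-d}\ld_n^{-1})^2 + \sum_{M<|n|\le N}\jb{n}^{-2d} \les M^{-d}\sum_{|n|\le M}\jb{n}^{-d} + M^{-d} \les M^{-d}\log M$, using $\jb{n}^{-2d}\ld_n^{-2} = \jb{n}^{-d}M^{-d}$ on the first sum and $\sum_{|n|>M}\jb{n}^{-2d}\les M^{-d}$ on the second. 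For \eqref{NRZ5}, since $\ft f_M(n)$ is supported on $|n|\gtrsim M$, $\E(\int Y_N f_M dx)^2 = \sum_{|n|\gtrsim M}\jb{n}^{-d}|\ft f_M(n)|^2 \les M^{-d}\|f_M\|_{L^2}^2 \les M^{-d}$ by \eqref{fM0}, and identically for $Z_M$ using $c_n \les \jb{n}^{-d}$. Finally for \eqref{NRZ6}, differentiating \eqref{ZZZ} gives $\frac{d}{ds}\ft Z_M(n,s) = \ld_n(\ft Y(n,s) - \ft Z_M(n,s))$, and $\|\frac{d}{ds}Z_M(s)\|_{H^{d/2}}^2 = \sum_{|n|\le M}\jb{n}^d \ld_n^2 |\ft Y(n,s) - \ft Z_M(n,s)|^2$; taking expectation and integrating in $s$, the It\^o isometry gives $\E\int_0^1|\ft Y(n,s) - \ft Z_M(n,s)|^2 ds \les \jb{n}^{-d}\ld_n^{-1}$, so the total is $\les \sum_{|n|\le M}\jb{n}^d\ld_n^2\cdot\jb{n}^{-d}\ld_n^{-1} = \sum_{|n|\le M}\ld_n = M^{d/2}\sum_{|n|\le M}\jb{n}^{-d/2} \les M^{d/2}\cdot M^{d/2} = M^d$, where $\sum_{|n|\le M}\jb{n}^{-d/2}\sim M^{d/2}$.

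The main obstacle, and the step requiring the most care, is \eqref{NRZ1}: it is not merely an upper bound but a matching lower bound $\gtrsim \log M$, so I cannot afford to throw away constants, and I must confirm that the $O(\ld_n^{-1})$ corrections to $2b_n - c_n$ (which come from the transient terms $e^{-\ld_n}$ and whose signs need watching) are genuinely summable and do not erode the leading $\sum \jb{n}^{-d}\sim \log M$. Concretely I would show $2b_n - c_n = \jb{n}^{-d}\bigl(1 - \ld_n^{-1}(1-e^{-\ld_n})\bigl)^{\!2}\!\big/\!\bigl(\text{something}\bigl)$ — rather, the clean identity is $2b_n - c_n = a_n - \E|\ft Y_N(n) - \ft Z_M(n)|^2 = \jb{n}^{-d}\bigl(1 - \tfrac{1-e^{-2\ld_n}}{2\ld_n}\bigl)$, which is manifestly $\ge \tfrac12\jb{n}^{-d}$ since $\ld_n\ge 1$, giving the lower bound immediately, and $\le \jb{n}^{-d}$ giving the upper bound; so in fact this identity disposes of \eqref{NRZ1} cleanly once the explicit solution is in hand. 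A secondary point to be careful about is the bookkeeping of the reality constraint $\ft Z_M(-n,t) = \overline{\ft Z_M(n,t)}$ (inherited from that of $Y$) when passing between sums over $n\in\Z^d$ and the variances of the real field, and the combinatorial factor $2$ in the Wick-square second-moment formula; these are standard but must be tracked to land the estimates with the stated powers of $M$. All the remaining estimates are then, as claimed, direct consequences of Lemma~\ref{LEM:leo1} together with the elementary sum asymptotics $\sum_{|n|\le M}\jb{n}^{-d}\sim\log M$ and $\sum_{|n|\le M}\jb{n}^{-d/2}\sim M^{d/2}$.
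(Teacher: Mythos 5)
Your proposal is correct and follows essentially the same route as the paper: solve the linear SDE explicitly (you do it via Duhamel plus an integration by parts, the paper passes first to the SDE for $X_n = \ft Y_N(n) - \ft Z_M(n)$, but both land on the same stochastic-integral representation), then reduce each claimed estimate to It\^o isometry and the elementary sum asymptotics $\sum_{|n|\le M}\jb{n}^{-d}\sim\log M$, $\sum_{|n|\le M}\jb{n}^{-d/2}\sim M^{d/2}$, $\sum_{|n|>M}\jb{n}^{-2d}\les M^{-d}$. Your observation that \eqref{NRZ1} follows cleanly from the exact identity $2b_n - c_n = a_n - \E|\ft Y_N(n)-\ft Z_M(n)|^2$, together with your explicit tracking of the Wick-square combinatorial factor in \eqref{NRZ3}, is a slightly tidier presentation of what the paper does, but not a different argument.
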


We now define $ \al_{M, N}$ by
\begin{align} 
 \al_{M, N}= \frac {\E \bigg[ 2 \int_{\T^d}Y_N\z_M dx-\int_{\T^d}\z_M^2  dx \bigg]}{\int_{\T^d} f_M^2 dx}.
\label{fmb1}
\end{align}

\noi
for $N\ge M \gg 1$.
Then, from \eqref{fM0} and \eqref{NRZ1}, we have
\begin{align}
 \al_{M, N} = \s_M(1+o(1)) \sim \log M
\label{logM}
\end{align}

\noi
for any $N \ge M\gg 1$.

\medskip

\noi
$\bullet$ {\bf Part 2:}
In this part, we  prove the divergence \eqref{pa0}.
For $M \gg 1$,
we set $f_M$, $\z_M$, and $ \al_{M, N}$ as in \eqref{fMdef}, Lemma \ref{LEM:leo2}, and \eqref{fmb1}.
For the minimization problem \eqref{DPf},
we set  a drift $\dr= \dr^0$ by 
\begin{align}
 \dr^0 (t) 
 & = \jb{\nb}^{\frac d2} \bigg( -\frac{d}{dt} \z_M(t) + \sqrt{ \al_{M, N}} f_M \bigg)
\label{paax}
\end{align}
	
\noi
such that 
\begin{align}
\Dr^0 = I(\dr^0)(1) 
= \int_0^1 \jb{\nb}^{-\frac d2} \dr^0(t) \, dt = - \z_M + \sqrt{ \al_{M, N}} f_M.
\label{paa0}
\end{align}

\noi
We also  define $Q(u$) by 
\begin{align}
Q(u) = \frac 14 \int_{\T^d}  u^4 dx
\qquad \text{and} \qquad Q_{\R^d}(v) = \frac 1{4(2\pi)^d} \int_{\R^d}  v^4 dx,
\label{paa1}
\end{align}
for $u \in L^4(\T^d)$ and  $v\in L^4(\R^d)$, respectively.

	
Let us first make some preliminary computations.
By Cauchy's inequality, we have
\begin{align}
\begin{split}
|\z_M(\sqrt{ \al_{M, N}} f_M)^3| &\le \frac \dl 4 \al_{M, N}^2 f_M^4 +\frac { 1}{\dl} \al_{M, N} \z_M^2 f_M^2, \\
|\z_M^3  \sqrt{ \al_{M, N}} f_M | &\le \frac \dl4 \z_M^4+\frac{ 1}{\dl} \al_{M, N}\z_M^2  f_M^2
\end{split}
\label{YO}
\end{align}

\noi
for any $0<\dl<1$.
Then, from 
\eqref{paa0}, \eqref{paa1}, and \eqref{YO}, we have 
\begin{align}
\begin{split}
 Q(\Dr^0) & -  \al_{M, N}^2 Q(f_M) \\
&= - \int_{\T^d} \z_M(\sqrt{ \al_{M, N}} f_M)^3  dx
+ \frac32 \int_{\T^d} \z_M^2 (\sqrt{ \al_{M, N}} f_M)^2 dx \\
&\quad 
- \int_{\T^d}  \z_M^3 \sqrt{ \al_{M, N}} f_M dx + Q(\z_M) \\
&\ge -\delta  \al_{M, N}^2 Q(f_M)
- C_\delta  \al_{M, N}\int_{\T^d}   \z_M^2 f_M^2  dx +(1-\delta) Q(\z_M)  \\
&\ge -\delta  \al_{M, N}^2 Q(f_M) - C_\delta  \al_{M, N} \int_{\T^d} 
 \z_M^2f_M^2  dx
\end{split}
\label{pa1}
\end{align}

\noi
for any $0<\dl<1$.
From \eqref{logM},  \eqref{NRZ0} in 
Lemma \ref{LEM:leo2}, 
and \eqref{fM0} in 
Lemma \ref{LEM:leo1}, 
we have
\begin{align}
\begin{split}
\E \bigg[ \al_{M, N} \int_{\T^d} \z_M^2 f_M^2  dx \bigg]
& = \al_{M, N} \int_{\T^d} \E [ \z_M^2(x)] f_M^2(x)  dx \\
&\sim  (\log M)^2 \| f_M \|_{L^2}^2
\les (\log M )^2
\end{split}
\label{pa2}
\end{align}

\noi
for any $N \ge M \gg 1$.
Therefore, 
it  follows from 
\eqref{pa1}, \eqref{pa2}, and \eqref{logM} with \eqref{paa1} and  \eqref{fM1}
that 
for any  measurable set  $E$ with $\PP(E) >0$
and  any  $L \gg \ld \cdot  \al_{M, N}^2 Q(f_M)$, we have
\begin{align}
\begin{split}
\E \Big[\min\big( \g \ld Q(\Dr^0), L\big) \cdot \ind_E\Big]
&\ge \g  \ld (1-\delta)  \al_{M, N}^2 Q(f_M) \PP(E) -  \g  C'_\delta (\log M )^2 \\
&=  \g \ld (1-\dl) \s_M^2 M^d Q_{\R^d}(f)\PP(E) (1 + o(1))  
\end{split}
\label{paa2}
\end{align}

\noi
for any $N \ge M \gg 1$.
	
Recall from \eqref{fMdef} that $\ft f_M$ is supported on $\{|n|\leq M\}$.
Then, 
by 
Lemma \ref{LEM:Dr}\,(ii) with 
\eqref{paa0}, \eqref{paax}, 
\eqref{NRZ6} in Lemma \ref{LEM:leo2}, 
\eqref{logM}, 
and \eqref{fM0} in 
Lemma \ref{LEM:leo1},  we have 
\begin{align}
\begin{split}
\E \big[ \| \Dr^0\|_{H^\frac d2}^2 \big]
&\le \E \bigg[ \int_0^1 \|\dr^0(t)\|_{L^2}^2  dt \bigg] \\
&\les  \E\bigg[\int_0^1 \Big\| \frac d {ds} \z_M(s) \Big\|^2_{H^\frac d 2}ds\bigg]
 + M^d  \alpha_{M,N} \|f_M \|_{L^2}^2  \\
&\les  M^d \log M. 
\end{split}
\label{pa4}
\end{align}

Lastly,
recall 
the following  identity (see \cite[(1.18)]{OTh2}):
\begin{align}
H_k(x+y; \s )
&  = 
\sum_{\l = 0}^k
\begin{pmatrix}
k \\ \l
\end{pmatrix}
 x^{k - \l} H_\l(y; \s), 
\label{Herm}
\end{align}

\noi
which follows from a Taylor expansion
with the differentiation rule
\cite[p.\,159]{Kuo}:
 $H_k(x;\s) = k H_{k-1}(x;\s)$.
Then, 
 from \eqref{Wick0} with $k = 4$
and \eqref{Herm}, 
we have 
\begin{align}
\begin{split}
R_N (Y + \Dr^0)  & = 
\frac \ld4\int_{\T^d}  :\! Y_N^4 \!:  dx
+ \ld \int_{\T^d}  :\! Y_N^3 \!:  \Dr^0 dx+\frac {3\ld}2\int_{\T^d}  :\! Y_N^2 \!:  (\Dr^0)^2 dx
\\
&\hphantom{X}
+ \ld\int_{\T^d} Y_N  (\Dr^0)^3 dx
+ \frac \ld4\int_{\T^d} (\Dr^0)^4 dx, 
\end{split}
\label{Y00}
\end{align}

\noi	
where we used  
\begin{align}
\pi_N \Dr^0 = \Dr^0
\label{YY0a}
\end{align}
for $N \ge M \ge 1$.
We now state a lemma, controlling the second, third, and fourth
terms on the right-hand side of~\eqref{Y00}.
We present the proof of this  lemma in Subsection \ref{SUBSEC:LEM}.

\begin{lemma} \label{LEM:Dr1}
There exist   small $\eps>0$ and   a constant  $c_0=c_0(\eps)  >0$ 
such that for any $\dl>0$, we have
\begin{align}
\begin{split}
\bigg| \int_{\T^d}  :\! Y_N^3 \!:  \Dr^0dx  \bigg|
&\le c(\dl) \| :\! Y_N^3 \!: \|_{W^{-\eps,\infty}}^2  
+ \dl
\| \Dr^0\|_{ H^{\frac d2}}^2,   
\end{split}
\label{Y1}\\
\bigg| \int_{\T^d}  :\! Y_N^2 \!: (\Dr^0)^2 dx \bigg|
&\le c(\dl) \| :\! Y_N^2 \!: \|_{W^{-\eps,\infty}}^4  
+ \dl \Big(
\| \Dr^0\|_{ H^{\frac d2}}^2 + 
\| \Dr^0 \|_{L^4}^4   \Big),   
\label{Y2} \\
\begin{split}
\bigg| \int_{\T^d}  Y_N (\Dr^0)^3    dx\bigg|
&\le c(\dl)  \| Y_N \|_{W^{-\eps,\infty}}^{c_0}+ \dl \Big(
\| \Dr^0\|_{ H^{\frac d2}}^2 + 
\| \Dr^0 \|_{L^4}^4   \Big), 
\end{split}
\label{Y3}
\end{align}

\noi
uniformly in $N \in \N$.

\end{lemma}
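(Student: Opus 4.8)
The plan is to prove the three bounds \eqref{Y1}, \eqref{Y2}, and \eqref{Y3} by the usual paraproduct-plus-Young strategy, exploiting that $\Dr^0 = -Z_M + \sqrt{\al_{M,N}}f_M$ is frequency localized on $\{|n|\le M\}$ and hence, by Bernstein's inequality, enjoys $\|\Dr^0\|_{W^{\eps,q}} \lesssim_\eps \|\Dr^0\|_{H^{d/2}}$ for the finitely many exponents $q$ we will need. Since all three estimates are pathwise and purely deterministic in $Y_N$ and $\Dr^0$, I would phrase them as: for any $f \in W^{-\eps,\infty}(\T^d)$ (resp.\ in the relevant Wiener chaos) and any $g \in H^{d/2}(\T^d)$ frequency localized on $\{|n|\le M\}$, one has the stated bound with $g$ in place of $\Dr^0$; this keeps the proof clean and makes clear that the random variables only enter through the norms appearing on the right-hand sides, which are then controlled uniformly in $N$ by Lemma~\ref{LEM:Dr}(i).

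First I would handle \eqref{Y1}. Pairing a distribution of negative regularity with a function, one writes $\big|\int_{\T^d} :\!Y_N^3\!:\, g\, dx\big| \le \|:\!Y_N^3\!:\|_{W^{-\eps,\infty}} \|g\|_{W^{\eps,1}}$, and since $g$ is localized on $\{|n|\le M\}$ with $\|g\|_{W^{\eps,1}} \lesssim \|g\|_{W^{\eps,2}} \lesssim \|g\|_{H^{d/2}}$ (using $\eps < d/2$ and Bernstein), Young's inequality $ab \le c(\dl) a^2 + \dl b^2$ gives \eqref{Y1}. For \eqref{Y2} I would split $\big|\int :\!Y_N^2\!:\, g^2\, dx\big| \le \|:\!Y_N^2\!:\|_{W^{-\eps,\infty}} \|g^2\|_{W^{\eps,1}}$ and estimate $\|g^2\|_{W^{\eps,1}}$ by the fractional Leibniz rule (Lemma~\ref{LEM:prod}(ii)) as $\lesssim \|g\|_{L^2}\|\jb{\nabla}^\eps g\|_{L^2} \lesssim \|g\|_{H^{d/2}}^2$, again via Bernstein; then Young's inequality distributes the product as $\dl\|g\|_{H^{d/2}}^2 + \dl\|g\|_{L^4}^4 + c(\dl)\|:\!Y_N^2\!:\|_{W^{-\eps,\infty}}^4$ after interpolating $\|g\|_{H^{d/2}}^2 \lesssim \|g\|_{H^{d/2}}^{3/2}\|g\|_{H^{d/2}}^{1/2}$ or simply splitting the fourth power geometrically. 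For \eqref{Y3} I would bound $\big|\int Y_N\, g^3\, dx\big| \le \|Y_N\|_{W^{-\eps,\infty}}\|g^3\|_{W^{\eps,1}}$ and, by the fractional Leibniz rule applied twice, $\|g^3\|_{W^{\eps,1}} \lesssim \|g\|_{L^4}^2 \|\jb{\nabla}^\eps g\|_{L^2}$ (using the $L^4\cdot L^4 \cdot L^2$ Hölder triple after distributing the derivative), and then $\|\jb{\nabla}^\eps g\|_{L^2} \lesssim \|g\|_{H^{d/2}}$; finally Young's inequality with three factors, $abc \le c(\dl)(a^{c_0} ) + \dl b^{4} + \dl c^{2}$ after matching powers, yields \eqref{Y3} with $c_0 = c_0(\eps)$ chosen to balance the exponents (concretely, the exponent on $\|Y_N\|_{W^{-\eps,\infty}}$ is dictated by writing $\|Y_N\|_{W^{-\eps,\infty}}\cdot\|g\|_{L^4}^2\cdot\|g\|_{H^{d/2}}$ as a product of three terms with conjugate exponents summing to $1$ so that $\|g\|_{L^4}$ appears to the fourth power and $\|g\|_{H^{d/2}}$ to the second). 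The last step in all three is to invoke Lemma~\ref{LEM:Dr}(i) to conclude that the $W^{-\eps,\infty}$-norms of $Y_N$, $:\!Y_N^2\!:$, $:\!Y_N^3\!:$ have finite moments uniform in $N$, which is exactly the form in which these estimates will be used when we take expectations in \eqref{Y00}.

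The main obstacle is bookkeeping the exponents so that the quadratic term $\|\Dr^0\|_{H^{d/2}}^2$ and the quartic term $\|\Dr^0\|_{L^4}^4$ are the \emph{only} powers of $\Dr^0$ that survive on the right-hand sides — since in the subsequent argument these are precisely the quantities we control via \eqref{pa4} and \eqref{fM1} (the $L^4$-norm of $\Dr^0$ is large, of order $\al_{M,N}^{1/2}M^{d/4}$, but it is the \emph{good} term, absorbed into $\tfrac\s2 Q(\Dr^0)$ with a small constant). In particular, one must be careful in \eqref{Y3} not to let $\|\Dr^0\|_{H^{d/2}}$ enter with a power larger than $2$; this forces using $L^4$-norms rather than $H^{d/2}$-norms to soak up two of the three copies of $g$, which is why the fractional Leibniz rule is applied with the $L^4\cdot L^4\cdot L^2$ triple rather than an $L^2$-based one. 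The frequency localization of $\Dr^0$ on $\{|n|\le M\}$, together with Bernstein's inequality, is what makes all the borderline Sobolev embeddings ($W^{\eps,1}\hookleftarrow W^{\eps,2}$, etc.) harmless, since the implicit constants may depend on $M$ only through positive powers that are dominated by $M^d(\log M)^2$ in \eqref{paa2}.
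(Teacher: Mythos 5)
Your estimate for \eqref{Y1} matches the paper's (the embedding $\|\Dr^0\|_{W^{\eps,1}}\lesssim\|\Dr^0\|_{W^{\eps,2}}\lesssim\|\Dr^0\|_{H^{d/2}}$ on the compact torus is just H\"older plus $\eps< d/2$; Bernstein is not needed and, as noted below, must in fact be avoided). But your treatments of \eqref{Y2} and \eqref{Y3} have genuine gaps in the exponent bookkeeping, and the gaps are not cosmetic: they produce bounds that would break the downstream argument in \eqref{paa}--\eqref{paa2}.

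For \eqref{Y2} you estimate $\|(\Dr^0)^2\|_{W^{\eps,1}}\lesssim\|\Dr^0\|_{L^2}\|\jb{\nabla}^\eps\Dr^0\|_{L^2}\lesssim\|\Dr^0\|_{H^{d/2}}^2$, which leaves you with the product $\|\!:\!Y_N^2\!:\!\|_{W^{-\eps,\infty}}\cdot\|\Dr^0\|_{H^{d/2}}^2$. There is no way to extract an $\|\Dr^0\|_{L^4}^4$ term from this, because $L^4$ never entered; Young applied to $a\cdot c^2$ can only produce $c(\dl)a^2+\dl c^4=\dl\|\Dr^0\|_{H^{d/2}}^4$, and the quartic power of $\|\Dr^0\|_{H^{d/2}}$ is fatal --- it is of size $M^{2d}(\log M)^2$, which dominates the good term $M^d(\log M)^2$ in \eqref{paa2}. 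The paper avoids this by applying the Leibniz rule with the split $\|(\Dr^0)^2\|_{W^{\eps,4/3}}\lesssim\|\Dr^0\|_{H^{d/2}}\|\Dr^0\|_{L^4}$, so that only a single power of $H^{d/2}$ appears; then a three-factor Young with conjugate exponents $(4,2,4)$ gives exactly $c(\dl)\|\!:\!Y_N^2\!:\!\|_{W^{-\eps,\infty}}^4+\dl\|\Dr^0\|_{H^{d/2}}^2+\dl\|\Dr^0\|_{L^4}^4$. The principle is that every power of $H^{d/2}$ beyond one must be traded for $L^4$ via the product rule, since $\|\Dr^0\|_{H^{d/2}}^2$ is exactly at the boundary of what can be absorbed.

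For \eqref{Y3} your bound $\|(\Dr^0)^3\|_{W^{\eps,1}}\lesssim\|\Dr^0\|_{L^4}^2\|\jb{\nabla}^\eps\Dr^0\|_{L^2}\lesssim\|\Dr^0\|_{L^4}^2\|\Dr^0\|_{H^{d/2}}$ looks more promising, but Young's inequality cannot close on $a\cdot b^2\cdot c$ with targets $b^4$ and $c^2$: you would need exponents $q=2$ on the $L^4$-pair and $r=2$ on the $H^{d/2}$-factor, leaving $\tfrac1p=1-\tfrac12-\tfrac12=0$ and hence no room whatsoever for the $\|Y_N\|_{W^{-\eps,\infty}}$ factor. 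The paper's fix is to apply the Leibniz rule at an integrability index slightly above $1$ (with $\tfrac1{1+\dl}=\tfrac1{2+\dl_0}+\tfrac14+\tfrac14$), producing $\|\Dr^0\|_{W^{\eps,2+\dl_0}}\|\Dr^0\|_{L^4}^2$, and then a Sobolev-plus-interpolation step $\|\Dr^0\|_{W^{\eps,2+\dl_0}}\lesssim\|\Dr^0\|_{H^{d/2}}^\be\|\Dr^0\|_{L^4}^{1-\be}$ with $\be$ small; this yields the product $\|Y_N\|_{W^{-\eps,\infty}}\|\Dr^0\|_{H^{d/2}}^\be\|\Dr^0\|_{L^4}^{3-\be}$, for which $\tfrac\be2+\tfrac{3-\be}4=\tfrac{3+\be}4<1$, so Young does close with the remaining room going to $\|Y_N\|_{W^{-\eps,\infty}}^{c_0}$, $c_0=4/(1-\be)$.

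Finally, your remark that the implicit constants "may depend on $M$ only through positive powers that are dominated by $M^d(\log M)^2$" is a misconception: the constants in this lemma must be uniform in $M$ and $N$, and indeed they are in the paper's proof, which uses no frequency localization at all. Any positive power of $M$ entering the implicit constants through Bernstein would be unacceptable and would have to be tracked carefully against $M^d(\log M)^2$, which is precisely the accounting that the uniform estimates are designed to avoid.
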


Fix small  $\dl_0 > 0$.
Then,  from \eqref{Y00} and Lemma \ref{LEM:Dr1}, 
we have 
\begin{align}
\begin{split}
 R_N(Y + \Dr^0)
&\ge (1-\dl_0) \ld Q(\Dr^0) \\
& \quad - c(\dl_0)\ld \Big(
\|  :\! Y_N^3 \!:  \|_{W^{-\eps,\infty}}^2
 + \|:\! Y_N^2 \!:\|_{W^{-\eps,\infty}}^4 
+  \|Y_N\|_{W^{-\eps,\infty}}^{c_0}
\Big) \\
&\quad
-c\dl_0 \ld \|\Dr^0\|_{H^\frac d2}^2-   |R_N(Y)|.
\end{split}
\label{paa}
\end{align}

We are now ready to put everything together.
With \eqref{YY0a} in mind, 
suppose that for any $K>0$ and small $\dl_1 >0$, there exists $M_0=M_0(K,\dl_1) \geq 1$ such that 
\begin{align}
\PP\bigg( \Big|\int_{\T^d} (:{Y_N^2}: + 2 Y_N \Dr^0 + (\Dr^0)^2) dx \Big| \le K \bigg) 
\ge 1 - \dl_1, 
\label{pa5}
\end{align}
	
\noi
uniformly in $N \ge M \ge M_0$.
Then,
it follows from 
\eqref{DPf},  \eqref{paa}, \eqref{paa2},  
Lemma \ref{LEM:Dr},  \eqref{pa4}, \eqref{conv1} (controlling $|R_N(Y)|$, uniformly in $N \in \N$), 
and \eqref{logM} 
with \eqref{YY0a}
that 
there exist constants $C_1, C_2 > 0 $ such that 
\begin{align}
 -\log & \, \E_\mu\Big[\exp\Big(\min{(  R_N(u),L)} \cdot \ind_{\{ |\int_{\T^d} \, : u_N^2 :\, dx | \le K\}}\Big)   \Big] \notag \\
&\le \E\bigg[ -\min\big( R_N(Y + \Dr^0),L\big)\notag \\
& \hphantom{XXX}\times
\ind_{\{ |\int_{\T^d} ( :{Y_N^2}: + 2 Y_N \Dr^0 + (\Dr^0)^2) dx | \le K\}} + \frac 12 \int_0^1 \| \dr^0(t)\|_{L^2_x} ^2 dt \bigg] \notag \\
&\le  \E\bigg[ -\min\big((1-\dl_0) \ld  Q(\Dr^0),L\big) \cdot \ind_{\{ |\int_{\T^d} ( :{Y_N^2}: + 2 Y_N \Dr^0 + (\Dr^0)^2) dx | \le K\}}  \notag \\
&\hphantom{XXX}
+ c(\dl_0)\ld \Big( 
\|  :\! Y_N^3 \!:  \|_{W^{-\eps,\infty}}^2
 + \|:\! Y_N^2 \!:\|_{W^{-\eps,\infty}}^4 
+ \|Y_N\|_{W^{-\eps,\infty}}^{c_0}
\Big)   \notag \\
&\hphantom{XXX}
+ c\dl_0\ld\|\Dr^0\|_{H^\frac d2}^2
+  |R_N(Y)|
+ \frac 12 \int_0^1 \| \dr^0(t)\|_{L^2_x} ^2 dt \bigg]  \notag \\
&\le -(1-\dl_0)(1-\dl)(1-\dl_1)\ld \alpha_{M,N}^2M^d Q_{\R^d}(f)(1+o(1)) \notag \\
& \quad  + C_1(\dl_0,\ld) M^d \log M+ C_2(\dl_0,\ld) \notag \\
&= -(1-\dl_0)(1-\dl)(1-\dl_1)\ld \s_M^2M^dQ_{\R^d}(f)(1+o(1)).
\label{U2}
\end{align}

\noi
for any  $N \ge M \ge M_0(K,\dl_1)$
and $L \gg 
\ld \cdot  \al_{M, N}^2 Q(f_M)\sim \ld  M^d (\log M)^2$.
Therefore,  we obtain
\begin{align}
\begin{split}
\lim_{L \to \infty} & \liminf_{N \to \infty} \E_\mu\Big[\exp\Big(\min{(\ld R_N(u),L)} \Big) 
\cdot \ind_{\{ |\int_{\T^d} \, : u_N^2 :\, dx | \le K\}} \Big]  \\
\ge&~  \exp\Big(  (1-\dl_0)(1-\dl)(1-\dl_1)\ld \s_{M}^2M^dQ_{\R^d}(f) (1+o(1))\Big) \too  \infty, 
\end{split}
\label{U3}
\end{align}
	
\noi
as $M \to \infty$.
This proves \eqref{pa0} by assuming \eqref{pa5}.

\medskip

It remains to 
 prove \eqref{pa5} for any $K> 0$ and small $\dl_1>0$. From \eqref{paa0}, we have 
\begin{align}
\begin{split}
&\E \bigg[ \Big|\int_{\T^d} \Big(  :{Y_N^2}: + 2 Y_N \Dr^0 + (\Dr^0)^2 \Big) dx \Big|^2 \bigg]\\
&= \E \bigg[ \Big|\int_{\T^d} :{Y_N^2}: dx - 2 \int_{\T^d} Y_N \z_M dx+\int_{\T^d} \z_M^2 dx+   \al_{M, N} \int_{\T^d} f_M^2 dx \\
&\hphantom{X}
 + 2 \sqrt{ \al_{M, N}} \int_{\T^d} (Y_N-\z_M)f_M dx \Big|^2 \bigg].
\end{split}
\label{Pr1}
\end{align}
	
\noi
From \eqref{logM} and \eqref{NRZ5} in Lemma \ref{LEM:leo2}, we have
\begin{align}
\E \bigg[ \Big| \sqrt{ \al_{M, N}}  \int_{\T^d} (Y_N-\z_M)f_M dx   \Big|^2   \bigg] \les 
 M^{-d} \log M.
\label{Pr4}
\end{align}	

\noi
On other hand, 
from \eqref{fmb1} and \eqref{ZZZ2}, we have	
\begin{align}
\begin{split}
\int_{\T^d} & :\!Y_N^2\!: dx - 2 \int_{\T^d} Y_N \z_M dx+\int_{\T^d} \z_M^2 dx+   \al_{M, N} \int_{\T^d} f_M^2 dx \\
& =\int_{\T^d} (Y_N-\z_M)^2 -\E \big[(Y_N-\z_M )^2 \big] dx \\
& =\int_{\T^d} :\! ( Y_N-\z_M)^2 \!: dx.
\end{split}
\label{Pr2} 
\end{align}	
	
\noi
Hence, from \eqref{Pr1}, \eqref{Pr4}, and \eqref{Pr2} with \eqref{NRZ3}
in Lemma \ref{LEM:leo2}, we obtain
\begin{align*}
\E \bigg[ \Big|\int_{\T^d} \Big(  :{Y_N^2}: + 2 Y_N \Dr^0 + (\Dr^0)^2 \Big) dx \Big|^2 \bigg]\les 
M^{-d }\log M.
\end{align*}

\noi
Therefore, by Chebyshev's inequality, 
given any $K > 0$ and small $\dl_1 > 0$, there exists $M_0 = M_0(K, \dl_1) \geq 1$ such that 
\begin{align*}
\PP\bigg( \Big|\int_{\T^d} (:{Y_N^2}: + 2 Y_N \Dr^0 + (\Dr^0)^2) dx \Big| > K \bigg)
&\le C\frac{M^{-d} \log M}{K^2}
< \dl_1
\end{align*}

\noi
for any $M \ge M_0 (K,\dl_1)$.
This proves  \eqref{pa5}.

\medskip

\noi
$\bullet$ {\bf Part 3:}
In this last part, we establish the exact divergence rate \eqref{divrate} of $Z_{K, N}$. From \eqref{U3} with $M=N$, we already have 
\begin{align}
\log Z_{K, N} \ge (1-\dl_0)(1-\dl)(1-\dl_1)\ld \s_N^2N^dQ_{\R^d}(f)(1+o(1))
\label{Low0}
\end{align} 
as $N\to \infty$,
for any small $\dl,\dl_0,\dl_1 > 0$ and any  Schwartz function $f$
with 
$\|f\|_{L^2(\R^d, \frac{dx}{(2\pi)^d})} = 1$, 
 $\supp(\ft{f}) \subset \{ |\xi|\le 1\}$, and $\ft f(0) = 0$.
Since Schwartz functions with  $\supp(\ft{f}) \subset \{ |\xi|\le 1\}$ 
and $\ft f(0) = 0$
are dense in 
$L^2(\R^d) \cap \big\{f: \supp(\ft{f}) \subset \{ |\xi|\le 1\}\big\}$, there exists a sequence 
$\{f_n\}_{n \in \N}$ of Schwartz functions with  $\|f_n\|_{L^2(\R^d, \frac{dx}{(2\pi)^d})} = 1$ 
and $\supp(\ft f_n) \subset \{ |\xi|\le 1\}$ 
which are  almost optimizers 
for Bernstein's inequality~\eqref{bernstein} on $\R^d$, namely, we have 
$$ \lim_{n \to \infty} Q_{\R^d}(f_n) = \frac{C_B}{4}. $$
Therefore, by inserting $f_n$ in \eqref{Low0}
and taking  $n \to \infty$ and $\dl,\dl_0,\dl_1 \to 0$, we obtain 
\begin{align}
\liminf_{N\to\infty} \frac{\log Z_{K, N}}{\s_N^2 N^d} \ge \ld \frac{C_B}{4}.
\label{Low1}
\end{align}

Hence, it remains to prove the upper bound. 
In view of  \eqref{DPf}, we have
\begin{align}
\begin{split}
\log Z_{K, N}&\le \sup_{\dr \in \Ha}\E\bigg[ R_N(Y + \Dr)\\
&\hphantom{XXX}
\times
\ind_{\{ |\int_{\T^d} ( :{Y_N^2}: + 2 Y_N \Dr_N + \Dr_N^2) dx | \le K\}} - \frac 12 \int_0^1 \| \dr(t)\|_{L^2_x} ^2 dt \bigg]\\
& \le \sup_{\dr \in \L^2_{t,x}}  \E\bigg[ R_N(Y+\Dr) \\
&\hphantom{XXX}
\times
 \ind_{\{ |\int_{\T^d} ( :{Y_N^2}: + 2 Y_N \Dr_N + (\Dr_N)^2) dx | \le K\}} - \frac 12 \int_0^1 \| \dr(t)\|_{L^2_x} ^2 dt \bigg]\\
& \le  \sup_{\Dr \in \mathcal{H}_x^{\frac d2} }  \E\bigg[ R_N(Y+\Dr) \cdot
\ind_{\{ |\int_{\T^d} ( :{Y_N^2}: + 2 Y_N \Dr_N + (\Dr_N)^2) dx | \le K\}} - \frac 12 \| \Dr_N  \|^2_{H^{\frac d2}} \bigg],
\end{split}
\label{rate0}  
\end{align}
\noi
where 
$\Dr = I(\dr)(1)$ in the first two lines and $\Dr_N = \pi_N \Dr$.
Here, the space
$\L^2_{t,x}$   denotes the space of drifts, which are stochastic processes belonging to 
$ L^2([0,1]; L^2(\T^d))$ 
 $\PP$-almost surely (namely, they do not have be  adapted), 
 and the space  $\mathcal{H}_x^{\frac d2}$
 denotes the space of 
$H^{\frac d2}(\T^d)$-valued random variables.

 For any $\Dr \in \mathcal{H}_x^\frac d2 $, let $V = Y+ \Dr$.
 Then, with $V_N = \pi_N V$, 
 we have
\begin{align}
\Dr_N=-Y_N+V_N,
\label{DR}
\end{align}
\noi
and thus we see that 
\begin{align}
\int_{\T^d} ( :{Y_N^2}: + 2 Y_N \Dr_N + \Dr_N^2) dx  \le K 
\quad \text{is equivalent to}\quad 
 \int_{\T^d} V_N^2 dx \le K +\s_N
\label{cut0}
\end{align}
\noi
where $\s_N=\E\big[Y_N^2\big]$
 is as in \eqref{sigma1}.
Hence, from \eqref{rate0}, 
a change of variables $\Dr_N=-Y_N+V_N$, 
\eqref{cut0}, and
the almost optimal Bernstein inequality (Lemma \ref{LEM:Bernstein}), we have 
\begin{align}
\begin{split}
\log Z_{K, N} &\le  \sup_{\Dr \in \mathcal{H}_x^{\frac d2} }  
\E\bigg[ R_N(Y+\Dr) 
\cdot   \ind_{\{ |\int_{\T^d} ( :{Y_N^2}: + 2 Y_N \Dr_N + (\Dr_N)^2) dx | \le K\}}  \bigg]\\
& \le  \sup_{V_N \in \mathcal{H}_x^{\frac d2} }
  \E\bigg[  R_N(V) 
  \cdot  \ind_{\{  \int_{\T^d} V_N^2 dx \le K +\s_N   \}}  \bigg]  \\
& \le \sup_{V_N \in \mathcal{H}_x^{\frac d2} } 
 \E\bigg[ \ld \frac {C_{\text{B}} }{4} N^d (1+o(1)) \| V_N \|_{L^2}^4 \cdot 
 \ind_{\{  \int_{\T^d} V_N^2 dx \le K +\s_N   \}}  \bigg] + O(\ld \s_N^2) \\
& \le \ld \frac{C_{B}}{4} N^d 
(1+o(1))
(K+\s_N)^2 + O(\ld \s_N^2) 
= \ld \frac{C_{B}}{4} N^d \s_N^2(1+o(1)) 
\end{split}
\label{rate2}
\end{align}
\noi
as $N\to \infty$, 
where, in the third step,  we used 
\[R_N(V) = 
\int_{\T^d}
\Big(\frac{\ld}{4}V_N^4 - \frac{3\ld}{2} \s_N V_N^2 + \frac{3\ld}{4} \s_N^2 \Big) dx 
\le 
\frac{\ld}{4} \int_{\T^d}V_N^4 dx  + \frac{3\ld}{4}\s_N^2.\]

\noi
 Therefore, combining this with \eqref{Low1}, we conclude \eqref{divrate}.

\begin{remark}\rm
The perturbation (at the level of $\Dr^0$ in \eqref{paa0})
is  given by $f_M$ (modulo the logarithmic factor $\sqrt {\al_{M, N}}$).
We point out that Lemma \ref{LEM:leo1}
shows that 
 $f_M$  looks like a highly concentrated profile whose $L^4$-norm
 (in fact, any $L^p$-norm for $p> 2$) blows up 
 while its $L^2$-norm is $O(1)$ as $M \to \infty$.
Note that  the blowup of $L^4$-norm
\eqref{fM1} 
   was crucially used in \eqref{paa2}, 
which led to the 
 desired divergence rate $M^d (\log M)^2$ in \eqref{U3}. 
 Moreover, the uniform (in $M$) bound~\eqref{fM0} 
 on the $L^2$-norm $f_M$
 played an essential role
 in \eqref{pa2} and \eqref{pa4} to guarantee that 
 the terms  in~\eqref{pa2} and \eqref{pa4} 
 grow at a slower rate than  $M^d (\log M)^2$.
\end{remark}

\subsection{Proofs of the auxiliary lemmas}
\label{SUBSEC:LEM}

In this subsection, we present the proofs of 
Lemmas~\ref{LEM:leo1}, \ref{LEM:leo2},  and~\ref{LEM:Dr1}.

We first briefly discuss the proof of Lemma \ref{LEM:leo1}.

\begin{proof}[Proof of Lemma \ref{LEM:leo1}]

Define a function $F_M$ on $\R^d$ by setting
\[ F_M(x) = M^\frac d2 f(Mx).\]

\noi
Then, 
from the Poisson summation formula \eqref{Poss} with \eqref{fMdef}, we
have 
\begin{align}
f_M(x) = \sum_{m\in \Z^d} F_M(x+2\pi m)
= \sum_{m\in \Z^d} T_mf(x), 
\label{poi1}
\end{align}

\noi
where $T_m f(x) = M^\frac d2 f(M(x+2\pi m))$.

Recall our convention of the normalized Lebesgue measure on $\T^d$.
Since $f$ is a Schwartz function, 
we have 
\begin{align}
\begin{split}
\int_{\T^d} (T_0f(x))^k dx 
& = \frac {M^{d( \frac k2 - 1)}}{(2\pi)^d}\int_{\R^d}
\ind_{[-\pi M,   \pi M)^d}(x)f^k(x)
 dx\\
&  = 
 \frac {M^{d( \frac k2 - 1)}}{(2\pi)^d}\int_{\R^d}f^k(x) dx
 + O(M^{-\al})
 \end{split}
 \label{poi2}
\end{align}

\noi
for any $\al > 0$.
On the other hand, from \eqref{poi1}, for $k \in \N$, we have 
\begin{align}
\begin{split}
\int_{\T^d} f_M^k(x) dx 
& = \int_{\T^d} \bigg(\sum_{m \in \Z^d} T_mf(x)\bigg)^k dx\\
& = 
\int_{\T^d} (T_0f)^k(x) dx + \text{l.o.t.}.
 \end{split}
 \label{poi3}
\end{align}

\noi
Here,  l.o.t. consists of the sum of the terms of the form 
\[\int_{\T^d} \prod_{j = 1}^k T_{m_j} f(x)  dx,\]

\noi
where $m_j \ne 0$ for at least one $j$.
It follows from  the fast decay of the Schwartz function $f$
that, for any $\kk > 0$, there exists $C > 0$ such that 
\begin{align*}
|T_m f(x)| = M^\frac d2 |f(M(x+2\pi m))|
\le C (Mm)^{-\kk}
\end{align*}

\noi
for any $m \in \Z^d\setminus\{0\}$;
see the proof of Lemma 5.13 in \cite{OOT}.
As a consequence, by summing over $m_j \in \Z^d$, $j = 1, \dots, k$
(not all zero), we obtain
\begin{align}
\begin{split}
|\,\text{l.o.t.}| \les M^{-\al}.
 \end{split}
 \label{poi4}
\end{align}

\noi
Therefore, 
from \eqref{poi2}, \eqref{poi3}, and \eqref{poi4}
with $\|f\|_{L^2(\R^d, \frac{dx}{(2\pi)^d})} = 1$, 
we conclude 
\eqref{fM0} and 
\eqref{fM1}.

Next, we prove \eqref{fm2}.
Since $f$ is a Schwartz function with $\ft f (0) = 0$, 
it follows from the fundamental theorem of calculus that 
\begin{align}
|\ft f(\xi)|
= |\ft f(\xi) - \ft f(0)|
 \le C_f |\xi|
\label{poi5}
\end{align}

\noi
for any $\xi \in \R^d$.
By Plancherel's identity with \eqref{fMdef} and \eqref{poi5}, we have 
\begin{align*}
\int_{\T^d} (\jb{\nabla}^{-\al} f_M)^2 dx 
& = M^{-d} \sum_{\substack{n \in \Z^d\\|n|\le M}}\Big|\ft f\Big(\frac nM\Big)\Big|^2 \frac 1{\jb{n}^{2\al}}\\
& \le C^2_f  M^{-d-2} \sum_{\substack{n \in \Z^d\\|n|\le M}} \frac 1{\jb{n}^{2(\al-1)}}\\
& \les C^2_f  M^{-d-2 + \max(d+ 2 -2\al, 0)}.
\end{align*}

\noi
This prove \eqref{fm2}.
\end{proof}

Next, we present the proof of the approximation lemma (Lemma~\ref{LEM:leo2}).

\begin{proof}[Proof of Lemma \ref{LEM:leo2}]
Let 
\begin{align}
X_n(t)=\ft Y_N(n, t)- \ft \z_{M}(n, t), 
\quad |n|\le M.
\label{ZZ1} 
\end{align}
Then,  from \eqref{P2} and \eqref{ZZZ}, 
we see that $X_n(t)$ satisfies 
the following stochastic differential equation:
\begin{align*}
\begin{cases}
dX_n(t)=-\jb{n}^{-\frac d 2}M^\frac d2 X_n(t) dt +\frac{1}{\jb{n}^\frac d2}dB_n(t)\\
X_n(0)=0
\end{cases}
\end{align*}	

\noi
for $|n|\le M$.
By solving this stochastic differential equation, we have
\begin{align}
X_n(t)=\frac{1}{\jb{n}^\frac d2}\int_0^t e^{-\jb{n}^{-\frac d 2}M^\frac d2(t-s)}dB_n(s).
\label{ZZ2}
\end{align}

\noi
Then, from \eqref{ZZ1} and \eqref{ZZ2}, we have 
\begin{align}
\ft \z_{M}(n, t)= \ft Y_N(n, t)-\frac{1}{\jb{n}^\frac d2 }\int_0^t e^{-\jb{n}^{-\frac d 2}M^\frac d2 (t-s)}dB_n(s)
\label{SDE1}
\end{align}

\noi
for $|n|\le M$. Hence, from \eqref{SDE1}, the independence of $\{B_n \}_{n \in \Z^d}$,\footnote{Here, we are referring to the independence modulo the condition $\cj{B_n} = B_{-n}$, $n \in \Z^d$.
Similar comments apply in the following.} 
Ito's isometry, and \eqref{P2}, we have
\begin{align}
\begin{split}
\E \big[ |\z_M(x)|^2 \big]&=\sum_{|n| \le M} \bigg( \E \big[  | \ft Y_N(n) |^2  \big]
-\frac 2{\jb{n}^d}\int_0^1 e^{-\jb{n}^{-\frac d 2}M^\frac d2 (1-s)}ds \\
& \hphantom{XXXXX}
+\frac{1}{\jb{n}^d}\int_0^1 e^{-2\jb{n}^{-\frac d 2}M^\frac d2 (1-s)}ds \bigg)\\
& = \s_M + O\Big(\sum_{|n|\le M }\frac 1{\jb{n}^\frac d2 }\cdot  \frac{1}{M^\frac d2}\Big)\\
& = \s_M(1 + o(1)).
\end{split}
\label{ZZ3}
\end{align}

\noi
for any $M\gg 1$.
This proves \eqref{NRZ0}.

By Parseval's theorem, \eqref{SDE1}, \eqref{NRZ0}, and  
proceeding as in \eqref{ZZ3}, we have
\begin{align*}
\E\bigg[  2 \int_{\T^d} Y_N & \z_M dx - \int_{\T^d} \z_M^2 dx   \bigg]
=\E \bigg[ 2 \sum_{|n| \le M  }\ft Y_N(n)  \cj{ \ft \z_M(n)} -\sum_{|n |\le M }|  \ft \z_M(n) |^2    \bigg]\\
&=\E \bigg[ \sum_{|n | \le M } | \ft \z_M(n)   |^2+\sum_{|n|\leq M } \bigg( \frac 2{\jb{n}^\frac d2 } \int_0^1 e^{-\jb{n}^{-\frac d 2}M^\frac d2 (1-s) }dB_n(s) \bigg)  \cj{\ft \z_M(n)}   \bigg]\\
&= \s_M(1 + o(1)) + O\Big(\sum_{|n| \le M } \frac 1{\jb{n}^\frac d2}\cdot \frac 1{M^\frac d2}\Big)\\
& = \s_M(1 + o(1)).
\end{align*} 

\noi 
for any $N\ge M\gg 1$.
This proves  \eqref{NRZ1}.

Note that  $\ft Y(n)-\ft \z_M(n)$ is a mean-zero Gaussian random variable.
Then, from \eqref{SDE1}
and Ito's isometry, 
we have 
\begin{align}
\begin{split}
\E \bigg[ \Big( |\ft Y_N(n)- & \ft \z_M(n)|^2-\E\big[ |\ft Y(n)-\ft \z_M(n)|^2 \big] \Big)^2  \bigg]
 \les 
\Big(\E\big[ |\ft Y_N(n)-\ft \z_M(n)|^2 \big] \Big)^2  \\
& = \frac 1 {\jb{n}^{2d} } \bigg(\int_0^1 e^{-2\jb{n}^{-\frac d 2}M^{\frac d2}(1-s)  } ds \bigg)^2 
\sim \frac 1 {\jb{n}^{d} } \cdot \frac 1 {M^d}.
\end{split}
\label{ZZ4}
\end{align}

\noi
Hence, 
from Plancherel's identity, \eqref{ZZZ2}, 
 the independence of $\{B_n \}_{n \in \Z^d}$, 
 the independence
of 
$\big\{ |\ft Y_N(n) |^2- \E \big[ | \ft Y_N(n)|^2 \big]\big\}_{M < |n|\le N}$
and 
\[\big\{ |\ft Y_N(n)-\ft \z_M(n)|^2-\E\big[ |\ft Y_N(n)-\ft \z_M(n)|^2\big]\big\}_{|n|\le M},\]

\noi 
 \eqref{P2}, 
 and \eqref{ZZ4}, we have
\begin{align*}
\E \bigg[  & \Big|   \int_{\T^d} :\! ( Y_N-\z_M)^2 \!: dx  \Big|^2      \bigg] \notag \\
&= \sum_{M<|n|\le N } \E \bigg[ \Big( |\ft Y_N(n) |^2- \E \big[ | \ft Y_N(n)|^2 \big] \Big)^2 \bigg]\notag \\
&\hphantom{XX}+ \sum_{|n|\le M} \E \bigg[ \Big( |\ft Y_N(n)-\ft \z_M(n)|^2-\E\big[ |\ft Y_N(n)-\ft \z_M(n)|^2 \big] \Big)^2  \bigg]\notag  \\
&\les \sum_{M<|n|\le N}\frac 1{\jb{n}^{2d}}
+\sum_{|n|\le M }
\frac 1{\jb{n}^{d}}
\frac1 {M^d}
\les M^{-d}\log M.
\end{align*}

\noi
This proves \eqref{NRZ3}.

From \eqref{fm2} and \eqref{P2}, we have
\begin{align}
\begin{split}
\E\bigg[ \Big( \int_{\T^d} Y_N  f_M dx  \Big)^2\bigg]
&= \E \bigg[ \Big( \sum_{|n| \le M}  \ft Y_N(n) \cj{ \ft f_M(n)} \Big)^2 \bigg]
= \sum_{|n| \le M} \frac 1{\jb{n}^d} |\ft f_M(n)|^2 \\
&\le \int_{\T^d} \big(\jb{\nb}^{-\frac d2} f_M (x)\big)^2 dx
\les M^{-d}.
\end{split}
\label{app4}
\end{align}

\noi
From \eqref{ZZ2}, Ito's isometry, and \eqref{fm2}, we have 
\begin{align}
\begin{split}
 \E \bigg[ \Big( \sum_{|n| \le M} X_n(1) \cj{ \ft f_M(n)} \Big)^2 \bigg]
&=\E \Bigg[ \bigg|\sum_{|n|\leq M} \bigg( \frac 1{\jb{n}^\frac d2}  \int_0^1 e^{-\jb{n}^{-\frac d 2} M^{\frac d2}(1-s) } dB_n(s) \bigg)  \ft f_M(n)     \bigg|^2     \Bigg]\\
&\les M^{-\frac d2}\sum_{|n| \leq M} \frac 1{\jb{n}^\frac d2 }| \ft f_M(n)|^2\\
& \les M^{-d}.
\end{split}
\label{app5}
\end{align}

\noi
Hence, \eqref{NRZ5} follows
from \eqref{app4} and \eqref{app5}
with \eqref{SDE1}.

Lastly, from \eqref{ZZZ}, \eqref{ZZ1}, \eqref{ZZ2}, and Ito's isometry, we have 
\begin{align*}
\E\bigg[\int_0^1 \Big\| \frac d {ds} \z_M(s) \Big\|^2_{H^\frac d 2}ds\bigg] &= M^d \E\bigg[\int_0^1 \Big\| \pi_M(Y_N(s)) - \z_M(s) \Big\|^2_{L^2}ds\bigg] \\
&= M^d \E\bigg[ \int_0^1 \Big(\sum_{|n| \le M} |X_n(s)|^2\Big)  ds\bigg]\\
&=M^d \sum_{|n| \le M} \frac 1 {\jb{n}^{d}} \int_0^1 \int_0^s e^{-2\jb{n}^{-\frac d 2}M^\frac d2(s-s')} d s' ds \\
& \les M^d \sum_{|n| \le M } \frac 1{\jb{n}^\frac d2}\cdot \frac 1{M^\frac d2}\\
&  \les M^d, 
\end{align*}

\noi
yielding  \eqref{NRZ6}.
This  completes the proof of Lemma~\ref{LEM:leo2}.
\end{proof}

Finally, we 
present the proof of Lemma \ref{LEM:Dr1}.

\begin{proof}[Proof of Lemma \ref{LEM:Dr1}]
	
From the duality and Cauchy's inequality,  
we have
\begin{align}
\begin{split}
\bigg| \int_{\T^d}  :\! Y_N^3 \!: \Dr^0 dx \bigg|
&\le  \| :\! Y_N^3 \!:\|_{W^{-\eps,\infty}} \| \Dr^0\|_{W^{\eps,1} }\le  \| :\! Y_N^3 \!:\|_{W^{-\eps,\infty}} 
\| \Dr^0\|_{H^\frac d2}\\
&\le c(\dl)\| :\! Y_N^3 \!:\|_{W^{-\eps,\infty}}^2 + \dl \| \Dr^0\|_{H^\frac d2}^2. 
\end{split}
\label{ZZZ3}
\end{align}
	
\noi
This yields  \eqref{Y1}.

From the fractional Leibniz rule (Lemma \ref{LEM:prod}\,(ii)), we have 
\begin{align}
\begin{split}
\bigg| \int_{\T^d}  :\! Y_N^2 \!: (\Dr^0)^2 dx \bigg|
&  \le  \| :\! Y_N^2 \!:\|_{W^{-\eps,\infty}}
\|(\Dr^0)^2\|_{W^{\eps, 1}}\\
& \leq
 \| :\! Y_N^2 \!:\|_{W^{-\eps,\infty}} 
\|(\Dr^0)^2\|_{W^{\eps, \frac{4}{3}}}\\
& \les  \| :\! Y_N^2 \!:\|_{W^{-\eps,\infty}}\|\Dr^0\|_{H^\frac d2}\|\Dr^0\|_{L^4}.
\end{split}
\label{ZZZ4}
\end{align}

\noi
Then, the second estimate \eqref{Y2}
follows from Young's inequality.

Lastly, we consider \eqref{Y3}.
From the fractional Leibniz rule 
(Lemma \ref{LEM:prod}\,(ii))
(with $\frac{1}{1+\dl} = \frac 1{2+\dl_0} + \frac 14 + \frac 14$
for small $\dl, \dl_0 > 0$), 
 Sobolev's inequality, 
 and the interpolation (Lemma \ref{LEM:prod}\,(i)),
we  have
 \begin{align}
 \begin{split}
\bigg| \int_{\T^d}    Y_N   (\Dr_N^0)^3 dx \bigg|
&\le \| Y_N \|_{W^{-\eps,\infty}} \| \jb{\nb}^{\eps}  (\Dr_N^0)^3\|_{L^{1+\dl}}\\
&\les\| Y_N \|_{W^{-\eps,\infty}} 
\| \Dr_N^0 \|_{W^{\eps, 2+\dl_0}} \| \Dr_N^0 \|_{L^4}^2\\
&\les\| Y_N \|_{W^{-\eps,\infty}} 
\| \Dr_N^0 \|_{H^\frac{d}{2}}^\be \| \Dr_N^0 \|_{L^4}^{3-\be}
\end{split}
\label{ZZZ2a}
\end{align}

\noi
for some small $\be > 0$.
Then, the third  estimate \eqref{Y3}
follows from Young's inequality
since $\frac{\be}2 + \frac {3-\be}4 < 1$ for small $\be > 0$.
This completes the proof of Lemma \ref{LEM:Dr1}.
\end{proof}

\section{Construction of the Gibbs measure with the cubic interaction}
\label{SEC:4}

In this section, we present the proof of Theorem \ref{THM:2}. 
We  prove  the uniform exponential integrability 
\eqref{exp3} 
via the variational formulation. 
Since the argument is identical for any finite $p \geq 1$, we only present details for the case $p =1$. 
Moreover,  the precise value of $\ld \in \R\setminus \{0\}$ does not play any role
and thus we set $\ld = 3$ in the following.

In view of the Bou\'e-Dupuis formula (Lemma \ref{LEM:var3}), 
it suffices to  establish a  lower bound on 
\begin{equation}
\W_N(\dr) = \E
\bigg[-R_N^\diamond(Y(1) + I(\dr)(1)) + \frac{1}{2} \int_0^1 \| \dr(t) \|_{L^2_x}^2 dt \bigg], 
\label{v_N0}
\end{equation}

\noi 
uniformly in $N \in \N$ and  $\dr \in \Ha$.
We  set $Y_N = \pi_N Y = \pi_N Y(1)$ and $\Dr_N = \pi_N  \Dr = \pi_N I(\dr)(1)$.

From \eqref{K2} and \eqref{Herm}, we have
\begin{align}
\begin{split}
R_N^\diamond (Y + \Dr)  & = 
\int_{\T^d}  :\! Y_N^3 \!:  dx
+ 3\int_{\T^d}  :\! Y_N^2 \!:  \Dr_N dx+3 \int_{\T^d}   Y_N   \Dr_N^2 dx
\\
&\hphantom{X}
+ \int_{\T^d} \Dr_N^3 dx
-  A \bigg\{ \int_{\T^d} \Big( :\! Y_N^2 \!: + 2 Y_N \Dr_N + \Dr_N^2 \Big) dx \bigg\}^2. 
\end{split}
\label{Y0}
\end{align}

\noi
Hence, from  \eqref{v_N0} and \eqref{Y0}, we have
\begin{align}
\begin{split}
\W_N(\dr)
&=\E
\bigg[
-\int_{\T^d}  :\! Y_N^3 \!:  dx
-3\int_{\T^d}  :\! Y_N^2 \!:  \Dr_N dx
-3\int_{\T^d}   Y_N   \Dr_N^2 dx 
\\
&\hphantom{XXX}
-\int_{\T^d} \Dr_N^3 dx
+  A \bigg\{ \int_{\T^d} \Big( :\! Y_N^2 \!: + 2 Y_N \Dr_N + \Dr_N^2 \Big) dx \bigg\}^2\\
&\hphantom{XXX}
+ \frac{1}{2} \int_0^1 \| \dr(t) \|_{L^2_x}^2 dt 
\bigg].
\end{split}
\label{v_N0a}
\end{align}

In the following, we first state 
a lemma, controlling the  terms appearing in \eqref{v_N0a}.
We present the proof of this lemma at the end of this section.

\begin{lemma} \label{LEM:Dr2}
\textup{(i)}
There exist small $\eps>0$ and  a constant  $c  >0$ such that
\begin{align}
\bigg| \int_{\T^d}  :\! Y_N^2 \!: \Dr_N dx \bigg|
&\le c \| :\! Y_N^2 \!: \|_{W^{-\eps,\infty}}^2  
+ \frac 1{100} 
\| \Dr_N \|_{H^\frac d2}^2, 
\label{YY2} \\
\bigg| \int_{\T^d}  Y_N \Dr_N^2  dx \bigg|
&\le c 
\| Y_N \|_{W^{-\eps,\infty}}^{6} + \frac 1{100} \Big(
\| \Dr_N \|_{H^\frac d2}^2 +  \| \Dr_N \|_{L^2}^4 \Big),
\label{YY3}\\
\bigg| \int_{\T^d}  \Dr_N^3  dx \bigg|
&\le     \frac 1{100} \| \Dr_N \|_{H^\frac d2}^2
+ \frac{A}{100} \| \Dr_N \|_{L^2}^{4 }
\label{YY14}
\end{align}

\noi
for any  sufficiently large $A>0$,  
uniformly in $N \in \N$.

\smallskip

\noi
\textup{(ii)}	
Let $A> 0$. Given any small $\eps > 0$, 
there exists $c = c(\eps, A)>0$ such that
\begin{align}
\begin{split}
A\bigg\{ \int_{\T^d}&  \Big( :\! Y_N^2 \!: + 2 Y_N \Dr_N + \Dr_N^2 \Big) dx \bigg\}^2 \\
&\ge \frac A4 \| \Dr_N \|_{L^2}^4 - \frac 1{100} \| \Dr_N \|_{H^\frac d2}^2 
- c\bigg\{ \| Y_N \|_{W^{-\eps,\infty}}^c 
+ \bigg( \int_{\T^d} :\! Y_N^2 \!:  dx \bigg)^2 \bigg\}, 
\end{split}
\label{YY5}
\end{align}

\noi
uniformly in $N \in \N$.

\end{lemma}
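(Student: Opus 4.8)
\textbf{Proof proposal for Lemma \ref{LEM:Dr2}.}

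The plan is to treat each estimate by pairing a rough stochastic object (a Wiener chaos of low regularity) against a smoother deterministic term coming from the drift, and absorbing the resulting powers of $\|\Dr_N\|_{H^{d/2}}$ and $\|\Dr_N\|_{L^2}$ via Young's inequality, with the $L^4$-type losses always reabsorbable into the $\frac A4 \|\Dr_N\|_{L^2}^4$ gain from the taming term in part (ii). Throughout I will use the Cameron--Martin bound $\|\Dr_N\|_{H^{d/2}} \le \|I(\dr)(1)\|_{H^{d/2}} \le (\int_0^1 \|\dr(t)\|_{L^2}^2 dt)^{1/2}$ from Lemma \ref{LEM:Dr}\,(ii), together with the pathwise bounds on $\|Y_N\|_{W^{-\eps,\infty}}$ and $\|:\!Y_N^2\!:\!\|_{W^{-\eps,\infty}}$ from Lemma \ref{LEM:Dr}\,(i); note that in dimension $d$ the relevant Sobolev scaling is $H^{d/2}(\T^d) \hookrightarrow W^{\eps,q}(\T^d)$ for suitable $q$, which is exactly the endpoint that makes these pairings work.

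For \eqref{YY2}, I would write $|\int :\!Y_N^2\!:\! \Dr_N\, dx| \le \|:\!Y_N^2\!:\!\|_{W^{-\eps,\infty}} \|\Dr_N\|_{W^{\eps,1}}$ and then bound $\|\Dr_N\|_{W^{\eps,1}} \lesssim \|\Dr_N\|_{W^{\eps,2}} \lesssim \|\Dr_N\|_{H^{d/2}}$ (valid since $\eps < d/2$), followed by Young's inequality $ab \le c a^2 + \frac{1}{100} b^2$. For \eqref{YY3}, pair $Y_N$ against $\Dr_N^2$: $|\int Y_N \Dr_N^2\, dx| \le \|Y_N\|_{W^{-\eps,\infty}} \|\Dr_N^2\|_{W^{\eps,1}}$, then use the fractional Leibniz rule (Lemma \ref{LEM:prod}\,(ii)) to get $\|\Dr_N^2\|_{W^{\eps,1}} \lesssim \|\Dr_N\|_{H^{d/2}} \|\Dr_N\|_{L^2}$ (interpolating $W^{\eps,q}$ between $H^{d/2}$ and $L^2$ with the exponents chosen so the low-regularity factor sits in $L^2$), and finally apply Young's inequality in the form $\|Y_N\|_{W^{-\eps,\infty}} \|\Dr_N\|_{H^{d/2}} \|\Dr_N\|_{L^2} \le c\|Y_N\|_{W^{-\eps,\infty}}^6 + \frac{1}{100}\|\Dr_N\|_{H^{d/2}}^2 + \frac{1}{100}\|\Dr_N\|_{L^2}^4$, splitting the product into the three homogeneities with weights summing to $1$. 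For \eqref{YY14}, estimate $|\int \Dr_N^3\, dx| \le \|\Dr_N\|_{L^3}^3 \lesssim \|\Dr_N\|_{H^{d/2}}^\theta \|\Dr_N\|_{L^2}^{3-\theta}$ for a small $\theta>0$ by interpolation and Sobolev embedding, then Young's inequality with exponents making the $H^{d/2}$-power $\le 2$ and the $L^2$-power $\le 4$, which forces the constant in front of $\|\Dr_N\|_{L^2}^4$ to be $O(A^{-\text{something}})$ when $A$ is large — hence the $\frac{A}{100}$ on the right, using that one may freely trade a large constant into a small one here because $3-\theta<4$.

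For part (ii), the key algebraic point is the lower bound $(a+b+c)^2 \ge \frac12 c^2 - C(a^2+b^2)$ applied with $c = \int \Dr_N^2\, dx = \|\Dr_N\|_{L^2}^2$, $a = \int :\!Y_N^2\!:\! dx$, $b = 2\int Y_N \Dr_N\, dx$. This produces $\frac A2 \|\Dr_N\|_{L^2}^4$ minus $CA$ times $(\int :\!Y_N^2\!:\! dx)^2$ and $CA$ times $(\int Y_N\Dr_N\, dx)^2$; the cross term is controlled by $|\int Y_N\Dr_N\, dx| \le \|Y_N\|_{W^{-\eps,\infty}}\|\Dr_N\|_{W^{\eps,1}} \lesssim \|Y_N\|_{W^{-\eps,\infty}}\|\Dr_N\|_{H^{d/2}}$, so $(\int Y_N\Dr_N\, dx)^2 \lesssim \|Y_N\|_{W^{-\eps,\infty}}^2\|\Dr_N\|_{H^{d/2}}^2$, and Young's inequality (with the quadratic-in-$\|\Dr_N\|_{H^{d/2}}$ factor, absorbing the other factor into $\|Y_N\|_{W^{-\eps,\infty}}^c$) lets us bury the $\|\Dr_N\|_{H^{d/2}}^2$ part into $\frac{1}{100}\|\Dr_N\|_{H^{d/2}}^2$ and collect the rest into the $c\{\|Y_N\|_{W^{-\eps,\infty}}^c + (\int :\!Y_N^2\!:\! dx)^2\}$ term; finally I reduce $\frac A2$ to $\frac A4$ to give room for the absorption. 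The main obstacle — really the only delicate point — is bookkeeping the Young-inequality exponents in \eqref{YY3} and \eqref{YY14} so that every term involving $\|\Dr_N\|_{L^2}$ appears with power at most $4$ and every term involving $\|\Dr_N\|_{H^{d/2}}$ with power at most $2$, and checking that the constant multiplying $\|\Dr_N\|_{L^2}^4$ coming out of \eqref{YY14} can be made $\le \frac{A}{100}$ precisely because the exponent $3-\theta$ is strictly less than $4$; once one tracks this, the $\frac A4 \|\Dr_N\|_{L^2}^4$ from \eqref{YY5} dominates all the $L^2^4$ losses when $A$ is large, and summing the estimates yields the desired lower bound on $\W_N(\dr)$.
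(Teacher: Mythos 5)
Your treatment of part (i) tracks the paper's proof closely (duality in $W^{-\eps,\infty}$--$W^{\eps,1}$, fractional Leibniz for \eqref{YY3}, Sobolev plus interpolation for \eqref{YY14}, then Young), and despite some loose bookkeeping — the Young exponent on $\|Y_N\|_{W^{-\eps,\infty}}$ in \eqref{YY3} comes out as $4$ rather than $6$ (harmless, since a smaller power can be traded for a larger one up to a constant), the interpolation exponent $\theta$ in \eqref{YY14} is exactly $1$ rather than "small", and the explanation of where the $\frac{A}{100}$ comes from is muddled (the Young constant is $O(1)$, not $O(A^{-\text{something}})$; one simply chooses $A$ large) — these estimates do go through.

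Part (ii), however, has a genuine gap. After the algebraic inequality $(a+b+c)^2 \ge \tfrac12 c^2 - C(a^2+b^2)$ you are left to control $A\big(\int_{\T^d} Y_N\Dr_N\,dx\big)^2$, and you bound it by $A\|Y_N\|_{W^{-\eps,\infty}}^2\|\Dr_N\|_{W^{\eps,1}}^2 \lesssim A\|Y_N\|_{W^{-\eps,\infty}}^2\|\Dr_N\|_{H^{d/2}}^2$. But this last expression is \emph{exactly} homogeneous of degree $2$ in $\|\Dr_N\|_{H^{d/2}}$, with the unbounded random prefactor $\|Y_N\|_{W^{-\eps,\infty}}^2$; no choice of Young exponents can split $X Z^2$ into $\tfrac1{100}Z^2 + c(X)$ for all $X,Z\ge 0$ (the supremum $\sup_Z (XZ^2 - \tfrac1{100}Z^2)$ is infinite as soon as $X>\tfrac1{100}$). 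The step you describe as "absorbing the other factor into $\|Y_N\|_{W^{-\eps,\infty}}^c$" therefore cannot work as stated. The paper avoids this by \emph{not} throwing away the full $H^{d/2}$ power: one interpolates
\[
\|\Dr_N\|_{W^{\eps,1}}\lesssim \|\Dr_N\|_{H^\eps} \lesssim \|\Dr_N\|_{L^2}^{1-\frac{2\eps}{d}}\|\Dr_N\|_{H^{d/2}}^{\frac{2\eps}{d}},
\]
so that after squaring the $H^{d/2}$ exponent is the tiny $\frac{4\eps}{d}<2$ and the $L^2$ exponent is $2-\frac{4\eps}{d}<4$. This strict slack in \emph{both} exponents is precisely what makes the three-factor Young inequality produce $\tfrac{1}{200A}\|\Dr_N\|_{H^{d/2}}^2 + \tfrac18\|\Dr_N\|_{L^2}^4 + c\|Y_N\|_{W^{-\eps,\infty}}^c$, with the $\|\Dr_N\|_{L^2}^4$ contribution then reabsorbable into $\tfrac{A}{2}\|\Dr_N\|_{L^2}^4$ from the leading term. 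You should replace the crude bound $\|\Dr_N\|_{W^{\eps,1}}\lesssim\|\Dr_N\|_{H^{d/2}}$ in your treatment of the cross term by this interpolation.
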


As in \cite{BG, GOTW, ORSW2, OOT}, 
the main strategy is 
to establish a pathwise lower bound on $\W_N(\dr)$ in~\eqref{v_N0a}, 
uniformly in $N \in \N$ and $\dr \in \Ha$, 
by making use of the 
 positive terms:
\begin{equation}
\U_N(\dr) =
\E \bigg[\frac A 4\| \Dr_N\|_{L^2}^4 + \frac{1}{2} \int_0^1 \| \dr(t) \|_{L^2_x}^2 dt\bigg]
\label{v_N1}
\end{equation}

\noi
coming from \eqref{v_N0a} and \eqref{YY5}.
From \eqref{v_N0a} and \eqref{v_N1} together with Lemmas  \ref{LEM:Dr2} and \ref{LEM:Dr}, we obtain
\begin{align}
\inf_{N \in \mathbb{N}} \inf_{\dr \in \Ha} \W_N(\dr) 
\geq 
\inf_{N \in \mathbb{N}} \inf_{\dr \in \Ha}
\Big\{ -C_0 + \frac{1}{10}\U_N(\dr)\Big\}
\geq - C_0 >-\infty.
\label{YYY5}
\end{align}

\noi
Then,  
 the uniform exponential integrability \eqref{exp3} 
follows from 
\eqref{YYY5} and Lemma \ref{LEM:var3}.
This proves 
 Theorem \ref{THM:2}.

\medskip

We conclude this section by 
presenting the proof of Lemma \ref{LEM:Dr2}.

\begin{proof}[Proof of Lemma \ref{LEM:Dr2}]
(i)
The estimate \eqref{YY2} follows
from replacing $:\! Y_N^3 \!:$ in \eqref{ZZZ3}
by $:\! Y_N^2 \!:$.

With small $\dl > 0$, 
it follows from 
the fractional Leibniz rule 
(Lemma \ref{LEM:prod}\,(ii))
and  Sobolev's inequality
as in \eqref{ZZZ2a} that 
\begin{align*}
\bigg| \int_{\T^d}   Y_N \Dr_N^2 dx \bigg|
&  \le  \|  Y_N\|_{W^{-\eps,\infty}}
\|\Dr_N^2\|_{W^{\eps, 1+\dl}}\\
& \leq
 \|  Y_N\|_{W^{-\eps,\infty}} 
\|\Dr_N\|^2_{H^\eps}\\
& \les  \|  Y_N\|_{W^{-\eps,\infty}}
\|\Dr_N\|_{H^\frac d2}^{\be} \|\Dr_N\|_{L^2}^{2-\be}
\end{align*}

\noi
for some small $\be> 0$.
Then, 
the second estimate \eqref{YY3} follows
from Young's inequality since
$\frac{\be} 2 + \frac{2-\be}4 < 1$.

As for the third estimate 
\eqref{YY14}, 
it follows from 
Sobolev's inequality, Lemma \ref{LEM:prod}\,(i),  and Cauchy's  inequality that 
\begin{align*}
\begin{split}
\bigg| \int_{\T^d}  \Dr_N^3  dx \bigg|
&\le C  \| \Dr_N \|_{H^\frac d6}^3 
\le C \| \Dr_N \|_{H^\frac d2}\| \Dr_N \|_{L^2}^2  \\
&\le \frac 1{100}\| \Dr_N \|_{H^\frac d2}^2 + \frac A{100} \| \Dr_N \|_{L^2}^4,
\end{split}
\end{align*}
	
\noi
where $A>0$ is sufficiently large.

\medskip

\noi
(ii) The bound \eqref{YY5} follows from a slight modification
of Lemma 5.8 in \cite{OOT}.
Noting that 
\begin{align*}
(a+b+c)^2
\ge \frac 12 c^2 - 2 (a^2+b^2)
\end{align*}
	
\noi
for any $a,b,c \in \R$, 
  we have 
\begin{align}
\begin{split}
A\bigg\{ & \int_{\T^d} \Big( :\! Y_N^2 \!: + 2 Y_N \Dr_N + \Dr_N^2 \Big) dx \bigg\}^2 \\
&\ge \frac A2 \bigg( \int_{\T^d} \Dr_N^2dx \bigg)^2
 - 2A \bigg\{ \bigg( \int_{\T^d} :\! Y_N^2 \!: dx \bigg)^2
+ \bigg( \int_{\T^d} Y_N \Dr_N dx \bigg)^2 \bigg\}.
\end{split}
\label{YZ3}
\end{align}

\noi
From Lemma \ref{LEM:prod}\,(i) 
 and Young's inequality, we have
\begin{align}
\begin{split}
\bigg| \int_{\T^d} Y_N \Dr_N dx \bigg|^2
&\le \| Y_N \|_{W^{-\eps,\infty}}^2 \| \Dr_N \|_{W^{\eps,1}}^2  \le \| Y_N \|_{W^{-\eps,\infty}}^2 \| \Dr_N \|_{H^\eps}^2\\
&\les \| Y_N \|_{W^{-\eps,\infty}}^2 \| \Dr_N \|_{L^2}^{2-\frac{4\eps}{d} } \| \Dr_N \|_{H^\frac d2}^{\frac {4\eps}d} \\
&\le c \| Y_N \|_{W^{-\eps,\infty}}^{\frac{2d}{d-2\eps}} + \frac 1{8} \| \Dr_N \|_{L^2}^4 + \frac1{200 A} \| \Dr_N \|_{H^\frac d2}^2.
\end{split}
\label{YZ4}
\end{align}
	
\noi
Hence, \eqref{YY5} follows from \eqref{YZ3} and \eqref{YZ4}.
\end{proof}

\begin{remark}\label{REM:exp}\rm

In considering the construction of the 
Gibbs measure with the cubic interaction, 
it is possible to consider
the following renormalized potential energy
with a general power $\g > 0$
on the Wick-ordered $L^2$-norm:
\begin{align}
R_N^{\diamond, \g} (u)
&=  \frac \ld 3\int_{\T^d}  :\! u_N^3 \!:  dx
- A \, \bigg( \int_{\T^d} :\! u_N^2 \!: dx\bigg)^\g, 
\label{ga0}
\end{align}

\noi
\noi
where the coupling constant
$\ld \in \R\setminus  \{0\} $
 denotes the strength of cubic  interaction
as in \eqref{K2}.
When $\g = 2$, 
$R_N^{\diamond, \g} (u)$ reduces to 
$R_N^\diamond (u)$ in \eqref{K2}.

In the following, 
let us briefly discuss the optimality of the power $\g = 2$ in Theorem \ref{THM:2}.
In view  of \eqref{YY3} and \eqref{YY14}, we need to control the term  $\| \Dr_N \|_{L^2}^4$, which forces us to choose $\g \ge 2$ in~\eqref{ga0}. 
When $\g = 2$, 
 it is also necessary to choose $A$ sufficiently large because of \eqref{YY3}.
 When $\g<2$ or when $\g=2$ and $A$ is sufficiently small, the taming by the Wick-ordered $L^2$-norm 
 in~\eqref{ga0} 
 is too weak to control the terms mentioned above,
 and 
thus we expect an analogous non-normalizability result to hold by repeating the proof of Theorem \ref{THM:1}.
\end{remark}

\appendix

\section{On the Gibbs measure for the two-dimensional Zakharov system}
\label{SEC:A}

In this appendix, we give a brief discussion on
Gibbs measures for the following 
scalar Zakharov system on $\T^d$:
\begin{align}
\begin{cases}
i \dt u +\Dl u = uw\\
c^{-2} \dt^2 w -  \Dl w = \Dl(|u|^2) 
\end{cases}
\label{Zak1}
\end{align}

\noi
This is a coupled system of Schr\"odinger and wave equations.
The unknown $u$ for the Schr\"odinger part is complex-valued, 
while the unknown $w$ for the wave part is real-valued.
By introducing 
the velocity field $\vec v$:
\begin{align*}
\dt w = - c^{2}\nb \cdot \vec v, 
\end{align*}

\noi
we can rewrite  \eqref{Zak1}
as 
\begin{align}
\begin{cases}
i \dt u +\Dl u = uw\\
\dt w = - c^{2}\nb \cdot \vec v\\
\dt \vec v = -\nb w - \nb (|u|^2).
\end{cases}
\label{Zak3}
\end{align}

\noi
Note that \eqref{Zak3} is a Hamiltonian system
with the Hamiltonian
\begin{align}
H(u, w, \vec v) = \frac{1}{2} \int_{\T^d}
\big( |\nb u|^2 + |u|^2 w \big) dx
+ \frac{1}{4} \int_{\T^d} w^2 dx 
+ \frac{c^2}{4} \int_{\T^d} |\vec v|^2 dx. 
\label{Zak4}
\end{align}

\noi
Moreover, 
the wave energy, namely, the $L^2$-norm of the Schr\"odinger component: 
\begin{align*}
M(u)  = \int_{\T^d} |u|^2 dx
\end{align*}

\noi
is known to be conserved.
See \cite{CCS}.

By setting  $W = \frac{1}{\sqrt2}w$
and $\vec V = (V_1, \dots, V_d) = \frac{c}{\sqrt2}\vec v$, 
 we can rewrite the Hamiltonian in \eqref{Zak4} as  
\begin{align}
H(u, W, \vec V) = \frac{1}{2} \int_{\T^d}
\big(|\nb u|^2 + \sqrt 2 |u|^2 W \big) dx 
+ \frac{1}{2} \int_{\T^d} W^2 dx 
+ \frac{1}{2} \int_{\T^d} |\vec V|^2 dx. 
\label{Zak5}
\end{align}

\noi
Then, the  Gibbs measure for the system \eqref{Zak3}
is formally given by 
\begin{align}
\begin{split}
d \rho 
& = Z^{-1} e^{- H(u, W, \vec V) - \frac 12 M(u)} du \, dW \, d \vec V\\
& = Z^{-1} e^{Q(u, W)} d\mu_{1}(u) d\mu_0(W) \prod_{j = 1}^d d \mu_0(V_j), 
\end{split}
\label{Zak6}
\end{align}

\noi
where
 the potential $Q(u, W)$ is given by 
\begin{align}
Q(u, W) = - \frac{1}{\sqrt 2} \int_{\T^d}
|u|^2 W dx, 
\label{Zak7}
\end{align}

\noi
the measure   $\mu_{1}$ denotes the complex-valued version
of the massive Gaussian free field on $\T^d$ with the density  formally given by  
\begin{align*}
d \mu_1 
= Z^{-1} e^{-\frac 12 \| u\|_{H^{1} }^2    } du
& =  Z^{-1} \prod_{n \in \Z^d} 
e^{-\frac 12 \jb{n}^2 |\ft u(n)|^2}   
d\ft u(n), 
\end{align*}

\noi
and $\mu_0$ denotes the white noise measure 
defined as  the pushforward measure
$\mu_0 = (\jb{\nb}^\frac{d}{2})_*\mu$, 
with  $\mu$  as in \eqref{gauss0}.
In view of the conservation of the Hamiltonian $H(u, W, \vec V)$
and the wave energy $M(u)$, 
the Gibbs measure $\rho$ in \eqref{Zak6}
expected to be invariant under the Zakharov dynamics.

As in the case of the focusing NLS, the main issue in constructing
the Gibbs measure $\rho$ in~\eqref{Zak6} comes
from the focusing nature of the potential, 
i.e.~the potential $Q(u, W)$ is unbounded from above.
In a seminal paper \cite{LRS}, 
Lebowitz, Rose, and Speer constructed the Gibbs measure $\rho$ when $d = 1$, 
by inserting a cutoff in terms of the conserved  wave energy $M(u) = \|u\|_{L^2}^2$,
which was then proved to be invariant under \eqref{Zak3} on $\T$ (and thus \eqref{Zak1})
by Bourgain \cite{BO94Zak}.

Then, a natural question is to consider the construction
of the Gibbs measure $\rho$ in the two-dimensional setting.\footnote{In a recent work \cite{Seong}, 
the second author studied  the construction of the Gibbs measure 
for the Zakharov-Yukawa system on $\T^2$ (i.e.~$\Dl$ in \eqref{Zak1} is replaced by $-(-\Dl)^{\g}$, $\g<1$)
and showed that the renormalized Gibbs measure is indeed normalizable
when $\g < 1$.
See \cite{Seong} for details.}
Before doing this, let us recall the relation between the Zakharov system
and the focusing cubic NLS.
By sending the wave speed $c$ in \eqref{Zak1} to $\infty$, 
the Zakharov system converges, at a formal level, 
to the focusing cubic NLS.
See, for example, \cite{OzT, MN} for rigorous convergence results on $\R^d$.
When $d = 2$, 
Theorem \ref{THM:1} states that the (renormalized) Gibbs measure
for the focusing cubic NLS on $\T^2$
is not normalizable, even with a Wick-ordered $L^2$-cutoff.
This suggests that, when $d = 2$,  the Gibbs measure
$\rho$ in \eqref{Zak6} for the Zakharov system
may not be constructible 
even with a Wick-ordered $L^2$-cutoff on the Schr\"odinger component $u$.

Given $N \in \N$, 
define 
 the following renormalized truncated potential energy:
\begin{align}
Q_N(u, W) = - \frac{1}{\sqrt 2} \int_{\T^2}
:\! | u_N|^2 \!: W dx
\label{Zak8}
\end{align}

\noi
where $u_N = \pi_N u $ as in Subsection \ref{SUBSEC:1.1}
and 
$:\! | u_N|^2 \!:  \, =  | u_N|^2  -\s_N$.
We then define  the renormalized truncated  Gibbs measure $\rho_{N}$
on $\T^2$, 
endowed with  a Wick-ordered $L^2$-cutoff,  by 
\begin{align*}
\begin{split}
d \rho_N 
& = Z_N^{-1} \ind_{ \{|\int_{\T^2} 
: | u_N|^2 :   dx| \le K\}}
e^{Q_N(u, W)} d\mu_{1}(u) d\mu_0(W) \prod_{j = 1}^2 d \mu_0(V_j), 
\end{split}
\end{align*}

\noi
By integrating in $(V_1, V_2)$ and then in $W$,
we have 
\begin{align}
\begin{split}
\iint & \ind_{ \{|\int_{\T^2} 
: | u_N|^2 :   dx| \le K\}}
e^{Q_N(u, W)}  d\mu_0(W) \prod_{j = 1}^2 d \mu_0(V_j) \\
& = \ind_{ \{|\int_{\T^2} 
: | u_N|^2 :   dx| \le K\}}
\int
\exp\bigg(-\frac{1}{\sqrt 2} \sum_{n \in \Z^2}
\F(: \!| u_N|^2 \!: )(n)\,  \cj{\ft W(n)} \bigg) d\mu_0(W)  \\
& = 
\ind_{ \{|\int_{\T^2} 
: | u_N|^2 :   dx| \le K\}}
\int_\R \exp\bigg(-\frac{1}{\sqrt 2} 
\F(: \!| u_N|^2 \!: )(0)\,   g_0\bigg)
\frac{e^{-\frac 12 g_0^2}}{\sqrt {2\pi}}
dg_0\\
& \hphantom{X}
\times \prod_{n \in \Ld} \frac{1}{\pi}\int_\C
\exp\bigg(-\sqrt 2 \Re\Big(
\F( | u_N|^2  )(n)\,  \cj{ g_n }\Big)
 \bigg) e^{-|g_n|^2}d g_n, 
\end{split}
\label{Zak10}
\end{align}

\noi
where 
$\{ g_n \}_{n \in \Z^2}$ is as in \eqref{IV2}\footnote{In particular, 
$g_0$ is a standard real-valued Gaussian random variables
where $\Re g_n$ and $\Im g_n$, $n \in \Ld$, 
are independent 
real-valued Gaussian random variables
with mean 0 and variance $\frac 12$.
}
and $\Ld$ denotes the index set given by 
$\Ld = (\Z\times\Z_+)\cup (\Z_+\times\{0\})$
such that $\Z^2 = \Ld \cup (-\Ld) \cup\{0\}$.
Here, we used the fact that 
$\F(: \!| u_N|^2 \!: )(n) = \F(| u_N|^2 )(n)$
for $n \ne 0$.
Then, recalling the moment generating function
$\E[e^{tX}] = e^{\frac 12 \s t^2}$
for $X \sim \NN_\R(0, \s)$,  we have
\begin{align}
\begin{split}
\eqref{Zak10} 
& = 
\ind_{ \{|\int_{\T^2} 
: | u_N|^2 :   dx| \le K\}}
 \exp\bigg(
\frac 14  \big(\F(: \!| u_N|^2 \!: )(0)\big)^2
 \bigg)\\
&  \hphantom{X}
\times \prod_{n \in \Ld} \frac{1}{\pi}\int_\C
\exp\bigg(-\sqrt 2
\Re \! \big(\F( | u_N|^2  )(n)\big) 
  \Re g_n
   \\
 & 
 \hphantom{XXXXXXXXXl}
  - \sqrt 2
\Im\! \big(\F( | u_N|^2  )(n) \big) \Im g_n
 \bigg) e^{-|g_n|^2}d g_n\\
 & \geq  
\ind_{ \{|\int_{\T^2} 
: | u_N|^2 :   dx| \le K\}}
 \exp\bigg( \frac 14 \big\| \pi_{\ne 0} |u_N|^2 \big\|_{L^2}^2 - CK^2 \bigg), 
 \end{split}
\label{Zak11}
\end{align}

\noi
where $\pi_{\ne 0}$ is the projection onto non-zero frequencies.

Let $\{ h_n \}_{n \in \Z^2}$
be a sequence of mutually independent standard complex-valued
Gaussian random variables.
Then, we have

\begin{align}
\begin{split}
\int\big\| \pi_{\ne 0} |u_N|^2 \big\|_{L^2}^2 d \mu_1
& = \E\Bigg[ \sum_{\substack{n_1 - n_2 + n_3 - n_4 = 0\\
|n_j| \leq N\\
n_1 - n_2 \ne 0}}
\frac{h_{n_1}}{\jb{n_1}}
\frac{\cj{h_{n_2}}}{\jb{n_2}}
\frac{h_{n_3}}{\jb{n_3}}
\frac{\cj{h_{n_4}}}{\jb{n_4}}\Bigg]\\
& = \sum_{|n_1 |\leq N}
\frac{1}{\jb{n_1}^2}
\sum_{\substack{|n_3|\leq N\\n_3 \ne n_1}}
\frac{1}{\jb{n_3}^2}
\sim (\log N)^2 \too \infty, 
\end{split}
\label{Zak11a}
\end{align}

\noi
as $N \to \infty$.
Then, from \eqref{Zak11a}, 
the interpolation of the $L^p$-spaces, and  Lemma \ref{LEM:hyp}, we have
\begin{align}
\begin{split}
\log N & \sim \big\| \| \pi_{\ne 0} |u_N|^2 \|_{L^2}\big\|_{L^2(\mu_1)} \ge \big \| \| \pi_{\ne 0} |u_N|^2 
\|_{L^2}\big\|_{L^1(\mu_1)}\\
&  \ge \frac{\big \| \| \pi_{\ne 0} |u_N|^2 \|_{L^2}\big\|_{L^2(\mu_1)}^3}{\big \| \| \pi_{\ne 0} |u_N|^2 \|_{L^2}\big\|_{L^4(\mu_1)}^2}  \sim \log N.
\end{split}
\label{Zak11b}
\end{align}

\noi
Also, from Lemma \ref{LEM:hyp}
and \eqref{sigma1},  
we have 
\begin{align}
\big\|\| u_N \|_{L^4_x}\big\|_{L^2(\mu_1)}
\les \big\|\| u_N \|_{L^2(\mu_1) }\big\|_{L^4_x}
\sim \s_N^\frac 12  \sim (\log N)^\frac 12.
\label{Zak11d}
\end{align}

\noi
Hence, 
given sufficiently small $ \eps \gg  \eta > 0$, 
it follows from 
Lemma \ref{LEM:var3}, 
Cauchy's inequality, Sobolev's inequality, 
\eqref{Zak11b}, 
and \eqref{Zak11d} that 
\begin{equation*}
\begin{split}
-\log & \bigg(\int   \exp\Big(-\eta\big\|\pi_{\ne 0} |u_N|^2 \big\|_{L^2}\Big) d \mu_1(u)\bigg)\\
& = \inf_{\dr \in \mathbb H_a} 
\E\bigg[  \eta \big\| \pi_{\ne 0} |\pi_N Y(1) + \pi_N I(\dr)(1)|^2\big\|_{L^2} + \frac{1}{2} \int_0^1 \| \dr(t) \|_{L^2_x}^2 dt \bigg]\\
& \ge \inf_{\dr \in \mathbb H_a} 
\E\bigg[  \eta \Big(\big\| \pi_{\ne 0} |\pi_N Y(1)|^2 \big\|_{L^2} - 2\| \pi_N Y(1) \pi_N I(\dr)(1)\|_{L^2} \\
& 
\hphantom{XXXXX}
- \|\pi_N I(\dr)(1)\|_{L^4}^2\Big)
+ \frac{1}{2} \int_0^1 \| \dr(t) \|_{L^2_x}^2 dt \bigg] \\
& \ge \inf_{\dr \in \mathbb H_a} \E\bigg[  \eta \Big(\big\| \pi_{\ne 0} |\pi_N Y(1)|^2 \big\|_{L^2} - \eps \| \pi_N Y(1)\|_{L^4}^2\Big) + \frac{1}{4} \int_0^1 \| \dr(t) \|_{L^2_x}^2 dt \bigg]\\
& \gtrsim \eta (\log N).
\end{split}
\end{equation*}

\noi
Therefore, 
 we obtain
\begin{align*}
\int  \exp\Big(-\eta\big\|\pi_{\ne 0} |u_N|^2 \big\|_{L^2}\Big) d \mu_1(u) 
\le \exp(-c\eta \log N)
\end{align*}

\noi
 for some constant $c > 0$.
Then, by Chebyshev's inequality, we conclude that, for any $M > 0$, 
\begin{equation} 
\begin{split}
\mu_1\Big( \big\|\pi_{\ne 0} |u_N|^2 \big\|_{L^2} > M \Big) \ge 1 - 
\exp\big(\eta(M- c \log N) \big)
\too 1, 
\end{split}
\label{Zak12}
\end{equation}

\noi
as $N \to \infty$.

 We also note that, given any $K > 0$,  there exists a constant $c_K > 0$ such that 
\begin{equation}\label{Zak13}
\begin{split}
\mu_1\bigg(\Big|\int_{\T^2} : | u_N|^2 :   dx \Big|\le   K\bigg) \ge  c_K,  
\end{split}
\end{equation}

\noi
uniformly in $N \in \N$.
Indeed, 
for $L = L(K) > 0$ (to be chosen later), as in \eqref{pax1}, we have 
\begin{align}
\E_{\mu_1}  \big[  e^{L}\cdot  \ind_{ \{|\int_{\T^2} 
: | u_N|^2 :   dx| \le K\}}\big]
\geq 
\E_{\mu_1}  \Big[  \exp\big(L\cdot  \ind_{ \{|\int_{\T^2} 
: | u_N|^2 :   dx| \le K\}}\big) \Big] - 1.
\label{Zak13a}
\end{align}

\noi
Now, by repeating the argument in 
Subsection~\ref{SUBSEC:3.2}, in particular,  \eqref{pa4} and \eqref{pa5} with $M = M_0(K)$, 
we have
\begin{align}
\begin{split}
 -\log & \, \E_{\mu_1}   \Big[  \exp\big(L\cdot  \ind_{ \{|\int_{\T^2} 
: | u_N|^2 :   dx| \le K\}}\big) \Big] \\
&\le \E\bigg[ -L\cdot 
\ind_{\{ |\int_{\T^d} ( :{Y_N^2}: + 2 Y_N \Dr^0 + (\Dr^0)^2) dx | \le K\}} + \frac 12 \int_0^1 \| \dr^0(t)\|_{L^2_x} ^2 dt \bigg] \\
&\le - \frac 12 L + C M_0^d \log M_0
\le - \frac 14 L 
\end{split}
\label{Zak13b}
\end{align}

\noi
by choosing $L = L(M_0) = L(K) \gg 1$ sufficiently large.
From \eqref{Zak13a} and \eqref{Zak13b}, 
we then obtain 
\begin{align*}
\mu_1\bigg(\Big|\int_{\T^2} : | u_N|^2 :   dx \Big|\le   K\bigg) 
\ge  \frac{e^{\frac 14 L} - 1}{e^L} =: c_K, 
\end{align*}

\noi
yielding \eqref{Zak13}.

Therefore, 
from 
\eqref{Zak10}, \eqref{Zak11}, \eqref{Zak12}, 
and \eqref{Zak13}, 
we obtain, for any $K > 0$, 
\begin{align*}
\begin{split}
 \lim_{N \to \infty} &  \iiint  \ind_{ \{|\int_{\T^2} 
: | u_N|^2 :   dx| \le K\}}
e^{Q_N(u, W)} d\mu_{1}(u) d\mu_0(W) \prod_{j = 1}^2 d \mu_0(V_j)\\
 & \ge 
 \liminf_{N \to \infty} 
\int
\ind_{ \{|\int_{\T^2} 
: | u_N|^2 :   dx| \le K\}}
 \exp\bigg( \frac 14 \big\| \pi_{\ne 0} |u_N|^2 \big\|_{L^2}^2 - CK^2 \bigg)
d\mu_{1}(u) 
\\
& \ge  \liminf_{N \to \infty}
 \Big(c_K - \exp\big(\eta(M- c \log N)\big) \Big)
 \exp\bigg(\frac14 M^2 - CK^2\bigg) \\
& = 
c_K 
 \exp\bigg(\frac14 M^2 - CK^2\bigg) 
 \too \infty 
\end{split}
\end{align*}
\noi

\noi
by taking $M \to \infty$.
This shows the non-normalizability of the Gibbs
measure for the Zakharov system on $\T^2$
even if we apply the Wick renormalization on the potential energy
$Q(u, W)$ in~\eqref{Zak7}
and endow the measure with a Wick-ordered $L^2$-cutoff
on the Schr\"odinger component.

Another way  would be to 
apply a change of variables 
as in the one-dimensional case due to  Bourgain \cite{BO94Zak}.
Namely, 
rewrite the Hamiltonian 
in \eqref{Zak5}
as in the one-dimensional case by Bourgain \cite{BO94Zak}:
\begin{align*}
H(u, W, \vec V) = \frac{1}{2} \int_{\T^2}
|\nb u|^2 dx -   \frac14 \int_{\T^2} |u|^4 dx 
+ \frac{1}{2} \int_{\T^2} (W+ \sqrt 2|u|^2)^2 dx 
+ \frac{1}{2} \int_{\T^2} |\vec V|^2 dx. 
\end{align*}

\noi
By introducing a new variable $\wt W = W+ \sqrt 2|u|^2$, 
we arrive at
\begin{align*}
\wt H(u, \wt W, \vec V) = \frac{1}{2} \int_{\T^2}
|\nb u|^2 dx -   \frac14 \int_{\T^2} |u|^4 dx 
+ \frac{1}{2} \int_{\T^2} \wt W^2 dx 
+ \frac{1}{2} \int_{\T^2} |\vec V|^2 dx. 
\end{align*}

\noi
Then, we apply the Wick renormalization to the potential energy. 

In this formulation, we 
consider  the renormalized truncated  Gibbs measure $\wt \rho_{N}$
defined by 
\begin{align*}
\begin{split}
d \wt \rho_N 
& = Z_N^{-1} \ind_{ \{|\int_{\T^2} 
: | u_N|^2 :   dx| \le K\}}
e^{R_N(u)} d\mu_{1}(u) d\mu_0(\wt W) \prod_{j = 1}^2 d \mu_0(V_j), 
\end{split}
\end{align*}

\noi
where the renormalized truncated potential energy
$R_N$ is defined by 
\begin{align*}
R_N(u)=\frac 14\int_{\T^2}  :\! |u_N|^4 \!:  dx.
\end{align*}

\noi
Note that, in the complex-valued setting,
the Wick-ordered fourth power is given by 
\[:\! |u_N|^4 \!: \, = |u_N|^4 - 4\s_N |u_N|^2 + 2\s_N^2.\]

\noi
See \cite{OTh}.
Then, by integrating in $\wt W$ and $\vec V$ and then 
by applying Theorem \ref{THM:1}
(in the complex-valued setting), we have 
\begin{align*}
\begin{split}
\sup_{N \in \N}
& \iiint
\ind_{\{|\int : | u_N|^2 :   dx| \le K\}}
e^{R_N(u)} d\mu_{1}(u) d\mu_0(\wt W) \prod_{j = 1}^2 d \mu_0(V_j)\\
& = 
\sup_{N \in \N}
\int
\ind_{\{|\int : | u_N|^2 :   dx| \le K\}}
e^{R_N(u)} d\mu_{1}(u) 
= \infty
\end{split}
\end{align*}

\noi
for any $K > 0$.
This shows the non-normalizability 
of the limiting Gibbs measure in this formulation.

\begin{remark}\rm
In the renormalization \eqref{Zak8}, 
we added the term $\frac {\s_N}{\sqrt2}\int_{\T^2}Wdx = \frac {\s_N}{2}\int_{\T^2}wdx$.
Note that the spatial mean of $w$ is conserved under the flow of the system \eqref{Zak3}.
Thus, by imposing the spatial mean-zero condition on $w$, 
we can write $Q_N(u, W)$ in  \eqref{Zak8} as
\begin{align*}
Q_N(u, W) = - \frac{1}{\sqrt 2} \int_{\T^2}
:\! | u_N|^2 \!: W dx
= - \frac{1}{\sqrt 2} \int_{\T^2}
 | u_N|^2  W dx, 
\end{align*}

\noi
showing that this term is self-renormalizing, 
and thus the renormalization \eqref{Zak8} does not affect the system
 \eqref{Zak3}.

\end{remark}

\section{Focusing quartic Gibbs measures with smoother Gaussian fields}
\label{SEC:B}

In this appendix,  we briefly discuss the construction 
of the focusing Gibbs measure $\rho_\al$ in~\eqref{Q2}
with a smoother base Gaussian measure $\mu_\al$
in \eqref{gauss1}.
We only discuss the  uniform exponential integrability bound
\eqref{Q1}.
Since the precise value of $\ld \in \R\setminus \{0\}$ does not play any role, 
we set $\ld = 4$ in the following.
As before, we also assume $p = 1$ for simplicity.

Fix  $\al > \frac d 2$.
The Gaussian measure  $\mu_\al$ in \eqref{Q2}
is the induced probability measure under the map:
\begin{equation*} 
\o\in \O \longmapsto u(\o) = \sum_{n \in \Z^d } \frac{ g_n(\o)}{\jb{n}^\al} e_n, 
\end{equation*}

\noi
where 
$\{ g_n \}_{n \in \Z^d}$ is as in \eqref{IV2}.
In particular,  a typical function $u$ in the support of $\mu$ 
belongs to $L^\infty(\T^d)$.

We define $Y^\al$ by 
\begin{align*}
Y^\al(t)
=  \jb{\nabla}^{-\al}W(t), 
\end{align*}

\noi
where  $W$ is  as in \eqref{P1}.
Then, 
in view of the Bou\'e-Dupuis formula (Lemma \ref{LEM:var3}), 
it suffices to  establish a  lower bound on 
\begin{equation}
\W_N^\al(\dr) = \E
\bigg[-R^{\diamond, \g}_N(Y^\al(1) + I^\al(\dr)(1)) + \frac{1}{2} \int_0^1 \| \dr(t) \|_{L^2_x}^2 dt \bigg], 
\label{Q7}
\end{equation}

\noi 
uniformly in $N \in \N$ and  $\dr \in \Ha$, 
where $R^{\diamond, \g}_N (u)$
and  $I^\al(\dr)$ are defined by 
\begin{align}
\begin{split}
R^{\diamond, \g}_N (u)
&=  \int_{\T^d}   u_N^4   dx
- A \, \bigg( \int_{\T^d}  u_N^2  dx\bigg)^\g
\end{split}
\label{Q8}
\end{align}

\noi
for some $\g >0$ (to be chosen later)
and 
\begin{align*}
 I^\al(\dr)(t) = \int_0^t \jb{\nabla}^{-\al} \dr(t') dt'.
\end{align*}

\noi
For simplicity of notation, 
we set 
 $Y_N^\al = \pi_N Y^\al = \pi_N Y^\al(1)$ and $\Dr_N^\al = \pi_N  \Dr^\al = \pi_N I^\al(\dr)(1)$.

From  \eqref{Q7} and \eqref{Q8}, we have
\begin{align*}
\begin{split}
\W_N^{\al}(\dr)
&=\E
\bigg[
-\int_{\T^d}   (Y_N^\al)^4  dx
-4\int_{\T^d}   (Y_N^\al)^3  \Dr_N^\al dx
-6\int_{\T^d}   (Y_N^\al)^2  (\Dr_N^\al)^2 dx\\
&\hphantom{XXX}
-4\int_{\T^d}   Y_N^\al  (\Dr_N^\al)^3 dx
-\int_{\T^d} (\Dr_N^\al)^4 dx
+  A \bigg\{ \int_{\T^d} \big (Y_N^\al + \Dr_N^\al\big)^2  dx \bigg\}^2\\
&\hphantom{XXX}
+ \frac{1}{2} \int_0^1 \| \dr(t) \|_{L^2_x}^2 dt 
\bigg].
\end{split}
\end{align*}

\noi
Let us first state a lemma, analogous to Lemma \ref{LEM:Dr2}.

\begin{lemma} \label{LEM:QQ}
\textup{(i)}
Let $\al > \frac d2$.
Then, there exists  $c  >0$ such that
\begin{align}
\bigg| \int_{\T^d}  ( Y_N^\al)^3 \Dr_N^\al dx \bigg|
&\le c \|  Y_N^\al \|_{L^\infty}^6
+ \frac 1{100} 
\| \Dr_N^\al \|_{L^2}^2, 
\label{QQ1} \\
\bigg| \int_{\T^d}  (Y_N^\al)^2 (\Dr_N^\al)^2  dx \bigg|
&\le c 
\| Y_N^\al \|_{L^\infty}^{4} + \frac 1{100} \| \Dr_N^\al \|_{L^2}^4,
\label{QQ2}\\
\bigg| \int_{\T^d} Y_N^\al (\Dr_N^\al)^3 dx \bigg|
&\le c \| Y_N^\al \|_{L^\infty}^{4}  
+ \frac 1{100} \| \Dr_N^\al \|_{L^4}^4, 
\label{QQ2a}\\
\bigg| \int_{\T^d}  (\Dr_N^\al)^4  dx \bigg|
&\le 
\frac 1{100} \| \Dr_N^\al \|_{H^\al}^2
+   \frac{A}{100} \| \Dr_N^\al \|_{L^2}^{\frac{8\al - 2d}{2\al - d}}
\label{QQ3}
\end{align}

\noi
for any  sufficiently large $A>0$,  
uniformly in $N \in \N$.

\smallskip

\noi
\textup{(ii)}	
Let $A, \g> 0$. 
Then, there exists $c = c(A, \g)>0$ such that
\begin{align}
\begin{split}
A\bigg\{ \int_{\T^d}&  
 \big (Y_N^\al + \Dr_N^\al\big)^2  dx \bigg\}^\g \ge \frac A4 \| \Dr_N^\al \|_{L^2}^{2\g} 
- c \| Y_N^\al \|_{L^\infty}^{2\g}, 
\end{split}
\label{QQ4}
\end{align}

\noi
uniformly in $N \in \N$.

\end{lemma}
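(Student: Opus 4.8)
The plan is to prove Lemma \ref{LEM:QQ} by elementary H\"older, Gagliardo--Nirenberg interpolation, and Young's inequality arguments, in close analogy with Lemma \ref{LEM:Dr2}, but exploiting the two simplifying features of the smoother setting: since $\al > \frac d2$, the field $Y_N^\al = \pi_N Y^\al(1)$ lies in $L^\infty(\T^d)$ (with good moment bounds, uniformly in $N$), so there is no need to resort to negative-regularity spaces $W^{-\eps,\infty}$ or the fractional Leibniz rule; and the drift $\Dr_N^\al = \pi_N I^\al(\dr)(1)$ obeys the Cameron--Martin-type bound $\|\Dr_N^\al\|_{H^\al}^2 \le \int_0^1 \|\dr(t)\|_{L^2_x}^2\,dt$, which is the exact analogue of Lemma \ref{LEM:Dr}(ii) and which will feed the drift cost in \eqref{Q7}.

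For Part (i), estimates \eqref{QQ1}, \eqref{QQ2}, \eqref{QQ2a} follow by putting every power of $Y_N^\al$ in $L^\infty$: one bounds $|\int_{\T^d} (Y_N^\al)^3 \Dr_N^\al\,dx| \le \|Y_N^\al\|_{L^\infty}^3 \|\Dr_N^\al\|_{L^2}$, $|\int_{\T^d}(Y_N^\al)^2(\Dr_N^\al)^2\,dx| \le \|Y_N^\al\|_{L^\infty}^2\|\Dr_N^\al\|_{L^2}^2$, and $|\int_{\T^d}Y_N^\al(\Dr_N^\al)^3\,dx| \le \|Y_N^\al\|_{L^\infty}\|\Dr_N^\al\|_{L^4}^3$ (using $\|\Dr_N^\al\|_{L^3}\le\|\Dr_N^\al\|_{L^4}$ on the torus), and then applies Young's inequality in each case with the conjugate pair needed to isolate the claimed $L^\infty$-powers of $Y_N^\al$. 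For \eqref{QQ3}, the key point is to interpolate $\|\Dr_N^\al\|_{L^4}$ between $H^\al$ and $L^2$: by Gagliardo--Nirenberg, $\|\Dr_N^\al\|_{L^4} \lesssim \|\Dr_N^\al\|_{H^\al}^{\ta}\|\Dr_N^\al\|_{L^2}^{1-\ta}$ with $\ta = \frac{d}{4\al}$, hence $\|\Dr_N^\al\|_{L^4}^4 \lesssim \|\Dr_N^\al\|_{H^\al}^{d/\al}\|\Dr_N^\al\|_{L^2}^{4-d/\al}$; since $\al>\frac d2$ forces $\frac d\al<2$, Young's inequality with exponents $\frac{2\al}{d}$ and $\frac{2\al}{2\al-d}$ converts this into $\frac1{100}\|\Dr_N^\al\|_{H^\al}^2 + \frac{A}{100}\|\Dr_N^\al\|_{L^2}^{(8\al-2d)/(2\al-d)}$ once $A$ is taken large, which is exactly \eqref{QQ3}.

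For Part (ii), I would start from the pointwise inequality $(u+v)^2 \ge \tfrac12 v^2 - u^2$, which upon integration gives $\int_{\T^d}(Y_N^\al+\Dr_N^\al)^2\,dx \ge \tfrac12\|\Dr_N^\al\|_{L^2}^2 - \|Y_N^\al\|_{L^2}^2 \ge \tfrac12\|\Dr_N^\al\|_{L^2}^2 - \|Y_N^\al\|_{L^\infty}^2$; since the left-hand side is also nonnegative, it dominates $\max\!\big(\tfrac12\|\Dr_N^\al\|_{L^2}^2 - \|Y_N^\al\|_{L^\infty}^2,\,0\big)$. Combining this with the elementary bound $(\max(a-b,0))^\g \gtrsim_\g a^\g - b^\g$ for $a,b\ge0$, then raising to the power $\g$ and multiplying by $A$, yields \eqref{QQ4} after relabelling the harmless $\g$-dependent constant in front of $\|\Dr_N^\al\|_{L^2}^{2\g}$.

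No step here presents a genuine obstacle; the only place demanding care is bookkeeping the interpolation and Young exponents in \eqref{QQ3} so that $\|\Dr_N^\al\|_{L^2}$ appears with the precise power $\frac{8\al-2d}{2\al-d}$. This is the power that matters: choosing $\g = \g(\al)$ suitably large (e.g.\ $\g = \frac{4\al-d}{2\al-d}$, which satisfies both $2\g \ge 4$ and $2\g \ge \frac{8\al-2d}{2\al-d}$ when $\al>\frac d2$), the coercive term $\frac A4\|\Dr_N^\al\|_{L^2}^{2\g}$ produced by \eqref{QQ4} absorbs all the $L^2$-powers of $\Dr_N^\al$ coming from \eqref{QQ2}, \eqref{QQ2a}, and \eqref{QQ3}, while the $H^\al$-terms are absorbed into the drift cost $\frac12\int_0^1\|\dr\|_{L^2_x}^2\,dt$ exactly as in \eqref{YYY5}; assembling these lower bounds then gives the uniform exponential integrability \eqref{Q1}.
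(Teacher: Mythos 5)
Part (i) of your proof is essentially identical to the paper's: the first three bounds are exactly the $L^\infty$--H\"older--Young scheme the authors invoke, and for \eqref{QQ3} your Gagliardo--Nirenberg step and the bookkeeping of the Young exponents match the paper's Sobolev-plus-interpolation argument verbatim (including the resulting power $\tfrac{8\al-2d}{2\al-d}$).

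In Part (ii) you take a slightly different route. The paper expands $\int (Y_N^\al + \Dr_N^\al)^2\,dx$ into three terms $a+b+c$ with $a = \int (Y_N^\al)^2$, $b = 2\int Y_N^\al\Dr_N^\al$, $c = \int (\Dr_N^\al)^2$, applies the three-term inequality $|a+b+c|^\g \ge \tfrac12 |c|^\g - C_\g(|a|^\g + |b|^\g)$, and then controls the cross term $|b|^\g$ separately by Cauchy--Schwarz and Young. You instead work pointwise, bounding $(Y_N^\al+\Dr_N^\al)^2 \ge \tfrac12(\Dr_N^\al)^2 - (Y_N^\al)^2$ before integrating, which is a cleaner way to eliminate the cross term. \emph{However}, the intermediate inequality you invoke, $\big(\max(a-b,0)\big)^\g \gtrsim_\g a^\g - b^\g$ for $a,b\ge 0$, is false as written: taking $a = b + \e$ with $\e \to 0^+$ and $\g > 1$, the left-hand side is $\e^\g$ while the right-hand side is $\approx \g b^{\g-1}\e$, so no uniform constant $c_\g$ can work. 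What is actually true, and what your argument needs, is the two-constant version
\begin{equation*}
\big(\max(a-b,0)\big)^\g \;\ge\; c_\g\, a^\g - C_\g\, b^\g \qquad (a,b\ge 0),
\end{equation*}
with $c_\g < 1 < C_\g$ in general (proved by splitting into the cases $a \ge 2b$ and $a < 2b$, say). With this correction your argument goes through, but the coefficient of $\|\Dr_N^\al\|_{L^2}^{2\g}$ comes out as $c_\g\, 2^{-\g}A$ rather than the $\tfrac A4$ asserted in the lemma; to actually recover $\tfrac A4$, replace the pointwise bound by $(Y_N^\al+\Dr_N^\al)^2 \ge (1-\e)(\Dr_N^\al)^2 - \e^{-1}(Y_N^\al)^2$ and the $\g$-power inequality by $\big(\max(a-b,0)\big)^\g \ge (1-\delta)^\g a^\g - C_{\delta,\g}\, b^\g$ for small $\e,\delta>0$, then choose $\e,\delta$ so that $(1-\e)^\g(1-\delta)^\g \ge \tfrac14$. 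This matches the flexibility the paper builds in via the $\tfrac{1}{100 C_\g}$ prefactor on the $\|\Dr_N^\al\|_{L^2}^{2\g}$ contribution from the cross term.
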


Set
\begin{align}
\g = \frac {4\al- d}{2\al -d}.
\label{Q1a}
\end{align}

\noi
Then, 
by arguing as in Section \ref{SEC:4}
with Lemma \ref{LEM:QQ},\footnote{We bound the second term on the right-hand side
of \eqref{QQ2a} by \eqref{QQ3}.}
 the almost sure $L^\infty$-regularity of $Y^\al$,  and 
a variant of \eqref{CM} for 
$  \Dr^\al =  I^\al(\dr)(1)$:
\begin{align*}
\| \Dr^\al \|_{H^{\al}}^2 \leq \int_0^1 \| \dr(t) \|_{L^2}^2dt, 
\end{align*}

\noi
we obtain 
the following   uniform lower bound:
\begin{align}
\inf_{N \in \mathbb{N}} \inf_{\dr \in \Ha} \W_N^\al(\dr) 
\geq - C_0 >-\infty.
\label{Q12}
\end{align}

\noi
Then,  
 the uniform exponential integrability \eqref{Q1} 
follows from 
\eqref{Q12} and Lemma \ref{LEM:var3}.

We now present the proof of Lemma \ref{LEM:QQ}.

\begin{proof}[Proof of Lemma \ref{LEM:QQ}]
(i) The estimates \eqref{QQ1}, \eqref{QQ2}, and \eqref{QQ2a} follow from H\"older's and Young's inequalities.
As for the fourth estimate \eqref{QQ3}, 
it follows from 
Sobolev's inequality, Lemma \ref{LEM:prod}\,(i),  and Young's  inequality that 
\begin{align*}
\begin{split}
\bigg| \int_{\T^d}  (\Dr_N^\al)^4  dx \bigg|
&\le C  \| \Dr_N^\al \|_{H^\frac d4}^4 
\le C \| \Dr_N^\al \|_{H^\al}^\frac{d}{\al} \| \Dr_N^\al \|_{L^2}^{4 - \frac d\al}  \\
&\le
\frac 1{100}\| \Dr_N^\al \|_{H^\al}^2+ 
 \frac A{100} \| \Dr_N^\al \|_{L^2}^{\frac{8\al - 2d}{2\al - d}},
\end{split}
\end{align*}
	
\noi
for  sufficiently large $A > 0$.

\smallskip

\noi
(ii)  
Note that 
\begin{align}
|a+b+c|^\g
\ge \frac 12 |c|^\g - C_\g (|a|^\g+|b|^\g)
\label{QQ5}
\end{align}
	
\noi
for any $a,b,c \in \R$.
Then, 
the bound \eqref{QQ4} follows from \eqref{QQ5}
and 
\begin{align*}
\bigg| \int_{\T^d}   Y_N^\al \Dr_N^\al dx \bigg|^\g
&\le c \|  Y_N^\al \|_{L^\infty}^{2\g}
+ \frac 1{100C_\g} 
\| \Dr_N^\al \|_{L^2}^{2\g}.
\qedhere
\end{align*}
\end{proof}

\medskip

\begin{remark}\rm
Let $\g$ be as in \eqref{Q1a}.
Then, 
we have $\g > 2$.
Moreover, we have $\g \to \infty$ as $\al \to \frac d2+$, 
indicating an issue at $\al = \frac d2$
even if we disregard a renormalization  required
for $\al = \frac d2$.

\end{remark}

\begin{ackno}\rm
K.S.~would like to express his gratitude to the School of Mathematics at the University of Edinburgh for its 
hospitality during his visit, where this manuscript was prepared. 
The authors would like to thank the anonymous referees for the helpful comments which improved the quality of the paper.

\end{ackno}

\medskip

\noi
{\bf Conflict of interest.}
None.

\medskip

\noi
{\bf Financial Support.}
 T.O.~was supported by the European Research Council (grant no.~864138 ``SingStochDispDyn"). K.S.~was partially supported by National Research Foundation of Korea (grant NRF-2019R1A5A1028324). L.T.~was funded by the Deutsche
Forschungsgemeinschaft (DFG, German Research Foundation) under Germany's Excellence
Strategy-EXC-2047/1-390685813, through the Collaborative Research Centre (CRC) 1060.


\begin{thebibliography}{99}
 
\bibitem{AC}
S.~Albeverio, A.~Cruzeiro,
{\it  Global flows with invariant (Gibbs) measures for Euler and Navier-Stokes two dimensional fluids,} 
Comm. Math. Phys. 129 (1990) 431--444.



\bibitem{AS}
N.~Aronszajn, K.~Smith, 
{\it Theory of Bessel potentials. I,}
 Ann. Inst. Fourier (Grenoble) 11 (1961),  385--475. 






\bibitem{BG}
N.~Barashkov, M.~Gubinelli, 
{\it  A variational method for $\Phi^4_3$}, 
Duke Math. J.
 169 (2020), no. 17, 3339--3415.



\bibitem{BO}
\'A.~B\'enyi,
T.~Oh, {\it  The Sobolev inequality on the torus revisited}, Publ. Math. Debrecen 83 (2013), no. 3, 359--374. 


\bibitem{BOZ}
\'A.~B\'enyi, T.~Oh, T.~Zhao, 
{\it Fractional Leibniz rule on the torus}, 
arXiv:2311.07998 [math.CA].

%
%
%
\bibitem{Bog}
V.~Bogachev, 
{\it Gaussian measures,}
Mathematical Surveys and Monographs, 62. American Mathematical Society, Providence, RI, 1998. xii+433 pp.

%
%

\bibitem{BD}
M.~Bou\'e, P.~Dupuis,
{\it A variational representation for certain functionals of Brownian motion},
Ann. Probab. 26 (1998), no. 4, 1641--1659.


\bibitem{BO94}
J.~Bourgain,
{\it Periodic nonlinear Schr\"odinger equation and invariant measures},
Comm. Math. Phys. 166 (1994), no. 1, 1--26.


\bibitem{BO94Zak}
J.~Bourgain,
{\it On the Cauchy and invariant measure problem for the periodic Zakharov system,} 
Duke Math. J. 76 (1994), no. 1, 175--202. 


\bibitem{BO95}
J.~Bourgain, 
{\it Nonlinear Schr\"odinger equations},
Hyperbolic equations and frequency interactions (Park City, UT, 1995), 3--157,
IAS/Park City Math. Ser., 5, Amer. Math. Soc., Providence, RI, 1999.

\bibitem{BO96}
J.~Bourgain, 
{\it Invariant measures for the 2D-defocusing nonlinear Schr\"odinger equation}, 
Comm. Math. Phys. 176 (1996), no. 2, 421--445. 


\bibitem{BB14}
J.~Bourgain, A.~Bulut, 
{\it Almost sure global well posedness for the radial nonlinear Schr\"odinger equation on the unit ball I: the 2D case}, Ann. Inst. H. Poincar\'e Anal. Non Lin\'eaire 31 (2014), no. 6, 1267--1288.

%
%





\bibitem{BS}
D.~Brydges, G.~Slade,
{\it 
	Statistical mechanics of the 2-dimensional focusing nonlinear Schr\"odinger equation}, 
Comm. Math. Phys.
182 (1996), no. 2, 485--504.

%
%
%
\bibitem{BTTz}
N.~Burq, L.~Thomann, N.~Tzvetkov,
{\it Remarks on the Gibbs measures for nonlinear dispersive equations},
Ann. Fac. Sci. Toulouse Math. (6) 27 (2018), no. 3, 527--597.



\bibitem{CFLOP}
A.~Chapouto, J.~Forlano, G.~Li, T.~Oh, D.~Pilod,
{\it  Low regularity a priori bounds for the intermediate long wave equation},
to appear in Proc. Amer. Math. Soc.

	
\bibitem{CLOP}
A.~Chapouto, G.~Li, T.~Oh, D.~Pilod,
{\it Deep-water limit of the intermediate long wave equation in $L^2$}, 
arXiv:2311.07997 [math.AP].

\bibitem{CLOZ}
A. Chapouto, G. Li, T. Oh, 
\textit{Deep-water and shallow-water limits of statistical equilibria for the
intermediate long wave equation}, in preparation.

	

\bibitem{CCS}
Y.~Cher,  M.~Czubak, C.~Sulem, 
{\it Blowing up solutions to the Zakharov system for Langmuir waves. Laser filamentation,} 77--95, CRM Ser. Math. Phys., Springer, Cham, 2016.


\bibitem{DPD2}
G.~Da Prato, A.~Debussche, 
{\it Two-dimensional Navier-Stokes equations driven by a space-time white noise}, 
J. Funct. Anal. 196 (2002), no. 1, 180--210.


\bibitem{DPD}
G.~Da Prato, A.~Debussche, 
{\it Strong solutions to the stochastic quantization equations,} Ann. Probab. 31 (2003), no. 4, 1900--1916.




\bibitem{DT}
G.~Da Prato, L.~Tubaro, 
{\it Wick powers in stochastic PDEs: an introduction}, Technical Report UTM, 2006, 39 pp.



\bibitem{Deng}
Y.~Deng, 
{\it Invariance of the Gibbs measure for the Benjamin-Ono equation}, 
J. Eur. Math. Soc.  17 (2015), no. 5, 1107--1198. 

\bibitem{DNY}
Y.~Deng, A.~Nahmod, H.~Yue,
{\it Invariant Gibbs measures and global strong solutions for nonlinear Schr\"odinger equations in dimension two},
to appear in Ann. of Math. 




\bibitem{Durr}
R.~Durrett,
{\it Probability--theory and examples,}
 Fifth edition. Cambridge Series in Statistical and Probabilistic Mathematics, 49. Cambridge University Press, Cambridge, 2019. xii+419 pp.

%

\bibitem{Fried}
L.~Friedlander, 
{\it An invariant measure for the equation $u_{tt} - u_{xx} + u^3 = 0 $},
 Comm. Math. Phys. 98 (1985), no.~1, 1--16.



\bibitem{GlimmJ87} 
J.~Glimm, A.~Jaffe, 
{\it Quantum physics. A functional integral point of view}, Second edition. Springer- Verlag, New York, 1987. xxii+535 pp.


\bibitem{Gra1}
L.~Grafakos,
{\it Classical Fourier analysis},
Third edition. Graduate Texts in Mathematics, 249. Springer, New York, 2014. xviii+638 pp.


\bibitem{Gra2}
L.~Grafakos,
{\it Modern Fourier analysis},
Third edition. Graduate Texts in Mathematics, 250. Springer, New York, 2014. xvi+624 pp. 



\bibitem{GKO}
M.~Gubinelli, H.~Koch, T.~Oh,
{\it  Renormalization of the two-dimensional stochastic nonlinear wave equations,}
Trans. Amer. Math. Soc.
370 (2018), no 10, 7335--7359.


\bibitem{GKOT}
M.~Gubinelli, H.~Koch, T.~Oh, L.~Tolomeo,
{\it Global dynamics for  the two-dimensional stochastic nonlinear wave equations,}
Int. Math. Res. Not. 2022, no. 21, 16954--16999.




\bibitem{GOTW}
T.~S.~Gunaratnam, T.~Oh, N.~Tzvetkov, H.~Weber, 
{\it Quasi-invariant Gaussian measures for the nonlinear wave equation in three dimensions}, 
Probab. Math. Phys. 3 (2022), no. 2, 343--379.
%



\bibitem{Kuo}
H.~Kuo, 
{\it Introduction to stochastic integration,} Universitext. Springer, New York, 2006. xiv+278 pp.



\bibitem{LRS}
J.~Lebowitz, H.~Rose, E.~Speer, 
{\it Statistical mechanics of the nonlinear Schr\"odinger equation}, J. Statist. Phys. 50 (1988), no. 3-4, 657--687.



\bibitem{Li}
G.~Li,
{\it  Deep-water and shallow-water limits of the intermediate long wave equation},
arXiv:2207.12088 [math.AP].

\bibitem{LOZ}
G.~Li, T.~Oh, G.~Zheng,
{\it On the deep-water and shallow-water limits of the intermediate long wave equation from a statistical viewpoint},
arXiv:2211.03243 [math.AP].



\bibitem{LW}
R.~Liang, Y.~Wang, 
{\it Gibbs dynamics for the weakly dispersive nonlinear Schr\"odinger equations}, 
arXiv:2306.07645 [math.AP].

\bibitem{MN}
N.~Masmoudi, K.~Nakanishi, 
{\it Energy convergence for singular limits of Zakharov type systems,}
 Invent. Math. 172 (2008), no. 3, 535--583.


\bibitem{McKean}
H.P.~McKean, 
{\it Statistical mechanics of nonlinear wave equations. IV. Cubic Schr\"odinger,} 
Comm. Math. Phys. 168 (1995), no. 3, 479--491. 
{\it Erratum: Statistical mechanics of nonlinear wave equations. IV. Cubic Schr\"odinger}, Comm. Math. Phys. 173 (1995), no. 3, 675.
%
%

\bibitem{Nelson}
E.~Nelson, 
{\it A quartic interaction in two dimensions}, 
1966 Mathematical Theory of Elementary Particles (Proc. Conf., Dedham, Mass., 1965), pp. 69--73, M.I.T. Press, Cambridge, Mass.
%




\bibitem{Nua}
D.~Nualart,
{\it The Malliavin calculus and related topics,}
Second edition. Probability and its Applications (New York). Springer-Verlag, Berlin, 2006. xiv+382 pp.





\bibitem{OOT}
T.~Oh, M.~Okamoto, L.~Tolomeo, 
{\it Focusing $\Phi^4_3$-model with a Hartree-type nonlinearity},
to appear in Mem. Amer. Math. Soc.



\bibitem{OOTolo}
T.~Oh, M.~Okamoto, L.~Tolomeo, 
{\it Stochastic quantization of the $\Phi^3_3$-model},
to appear in Mem. Eur. Math. Soc.





\bibitem{ORTh}
T.~Oh, G.~Richards, L.~Thomann,
{\it  On invariant Gibbs measures for the generalized KdV equations}, Dyn. Partial Differ. Equ. 13 (2016), no. 2, 133--153. 


\bibitem{ORSW}
T.~Oh, T.~Robert, P.~Sosoe, Y.~Wang,
{\it On the two-dimensional hyperbolic stochastic sine-Gordon equation}, 
Stoch. Partial Differ. Equ. Anal. Comput. 9 (2021), 1--32. 




\bibitem{ORSW2}
T.~Oh, T.~Robert, P.~Sosoe, Y.~Wang,
{\it Invariant Gibbs dynamics for the dynamical sine-Gordon model}, 
Proc. Roy. Soc. Edinburgh Sect. A 151 (2021), no. 5, 1450--1466. 




\bibitem{OST}
T.~Oh, P.~Sosoe, L.~Tolomeo, 
{\it Optimal integrability threshold for Gibbs measures associated with focusing NLS on the torus},
Invent. Math. 227 (2022), no. 3, 1323--1429.

%


\bibitem{OTh}
T.~Oh, L.~Thomann,
{\it  A pedestrian approach to the invariant Gibbs measure for the 2-d defocusing nonlinear Schr\"odinger equations,} Stoch. Partial Differ. Equ. Anal. Comput. 6 (2018), 397--445. 

%
%
%
%
%
\bibitem{OTh2}
T.~Oh, L.~Thomann, 
{\it Invariant Gibbs measure for 
	the 2-$d$ defocusing nonlinear wave  equations}, 
Ann. Fac. Sci. Toulouse Math. 
29 (2020), no. 1, 1--26 


\bibitem{OTz}
T.~Oh, N.~Tzvetkov,
{\it  Quasi-invariant Gaussian measures for the two-dimensional defocusing cubic nonlinear wave equation}, J. Eur. Math. Soc. 22 (2020), no. 6, 1785--1826. 

\bibitem{OW}
T.~Oh, Y.~Wang, 
{\it On the ill-posedness of the cubic nonlinear Schr\"odinger equation on the circle}, 
An. \c{S}tiin\c{t}. Univ. Al. I. Cuza Ia\c{s}i. Mat. (N.S.) 64 (2018), no. 1, 53--84. 


\bibitem{OzT}
T.~Ozawa, Y.~Tsutsumi, 
{\it The nonlinear Schr\"odinger limit and the initial layer of the Zakharov equations,} Differential Integral Equations 5 (1992), no. 4, 721--745.






\bibitem{PW}
G.~Parisi, Y.S.~Wu, 
{\it Perturbation theory without gauge fixing,}
Sci. Sinica 24 (1981), no. 4, 483--496. 



\bibitem{Rider}
B.~Rider, 
{\it On the $\infty$-volume limit
of the focusing cubic Schr\"odinger equation},
Comm. Pure Appl. Math. 55 (2002), no. 10, 1231--1248.

\bibitem{RSTW}
T.~Robert, K.~Seong, L.~Tolomeo, Y.~Wang,  {\it Focusing Gibbs measures with harmonic potential}, 
to appear in Ann. Inst. Henri Poincar\'e Probab. Stat.



\bibitem{RYZ}
M.~R\"ockner, H.~Yang, R.~Zhu, {\it Conservative stochastic 2-dimensional Cahn-Hilliard equation},  Ann. Appl. Probab. 31 (2021), no. 3, 1336--1375.

\bibitem{RZZ}
M.~R\"ockner, R.~Zhu, X.~Zhu, 
{\it Ergodicity for the stochastic quantization problems on the 2D-torus,} 
Comm. Math. Phys. 352 (2017) 1061--1090. 



\bibitem{RSS}
S.~Ryang, T.~Saito, K.~Shigemoto, 
{\it Canonical stochastic quantization}, Progr. Theoret. Phys. 73 (1985),
no. 5, 1295--1298.


\bibitem{Seong}
K.~Seong, {\it Invariant Gibbs dynamics for the two-dimensional Zakharov-Yukawa system}, 
 J. Funct. Anal. 286 (2024), no. 4, Paper No. 110244, 81 pp.



\bibitem{Shige}
I.~Shigekawa, 
{\it Stochastic analysis,}
Translated from the 1998 Japanese original by the author. Translations of Mathematical Monographs, 224. Iwanami Series in Modern Mathematics. American Mathematical Society, Providence, RI, 2004. xii+182 pp.
%
\bibitem{Simon}
B.~Simon, 
{\it  The $P(\varphi)_2$ Euclidean (quantum) field theory,} Princeton Series in Physics. Princeton University Press, Princeton, N.J., 1974. xx+392 pp.
%




%

\bibitem{STz1}
C.~Sun, N.~Tzvetkov,
{\it Gibbs measure dynamics for the fractional NLS}, SIAM J. Math. Anal.,
52, (2020) 4638--4704.

\bibitem{STz2}
C.~Sun, N.~Tzvetkov,
{\it Refined probabilistic global well-posedness for the weakly dispersive NLS}, 
Nonlinear Anal. 213 (2021), Paper No. 112530, 91 pp.
 	
\bibitem{Tolomeo3}
L.~Tolomeo, 
{\it Ergodicity for the hyperbolic $P(\Phi)_2$-model}, 
arXiv:2310.02190 [math.PR].



\bibitem{TW}
L.~Tolomeo, H.~Weber, 
{\it 
Phase transition for invariant measures of the focusing Schr\"odinger equation}, 
arXiv:2306.07697 [math.AP].



\bibitem{TsW}
P.~Tsatsoulis, 
H.~Weber, 
{\it Spectral gap for the stochastic quantization equation on the 2-dimensional torus}. Ann. Inst. Henri Poincar\'e Probab. Stat. 54 (2018), no. 3, 1204--1249.

\bibitem{Tz08}
N.~Tzvetkov,
{\it Invariant measures for the defocusing nonlinear Schr\"odinger equation},
Ann. Inst. Fourier (Grenoble) 58 (2008), no. 7, 2543--2604.


\bibitem{TzBO}
N.~Tzvetkov,  {\it Construction of a Gibbs measure associated to the periodic Benjamin-Ono equation,}
Probab. Theory Related Fields 146 (2010), no. 3-4, 481--514.


\bibitem{Ust}
A.~ \"Ust\"unel,
{\it Variational calculation of Laplace transforms via entropy on Wiener space and applications},
J. Funct. Anal. 267 (2014), no. 8, 3058--3083.









\end{thebibliography}
\end{document}